\documentclass{article}

\usepackage{arxiv}

\usepackage[utf8]{inputenc} % allow utf-8 input
\usepackage[T1]{fontenc}    % use 8-bit T1 fonts
\usepackage{hyperref}       % hyperlinks
\usepackage{url}            % simple URL typesetting
\usepackage{booktabs}       % professional-quality tables
\usepackage{amsfonts}       % blackboard math symbols
\usepackage{nicefrac}       % compact symbols for 1/2, etc.
\usepackage{microtype}      % microtypography
\usepackage{lipsum}

%--------------------------------------
\usepackage[all]{xy,xypic}
\usepackage{amsfonts,amssymb,amsmath,amsgen,amsopn,amsbsy,theorem,graphicx,epsfig}
\usepackage{eufrak,amscd,bezier,latexsym,mathrsfs,enumerate,multirow}
\usepackage[utf8]{inputenc}\usepackage[english]{babel}
\usepackage[dvipsnames]{xcolor}
\usepackage[pagewise]{lineno}

%------------------------------
\usepackage{multirow}
\usepackage{colortbl}
\usepackage{tikz}
\usetikzlibrary{arrows}
\usepackage[ruled,vlined]{algorithm2e}
\renewcommand{\arraystretch}{1.5}
\usepackage{float}
\restylefloat{table}
%-------------------------------------

\newtheorem{theorem}{Theorem}[section]

\newtheorem{definition}[theorem]{Definition}
\newtheorem{example}[theorem]{Example}

\newtheorem{lemma}[theorem]{Lemma}
\newtheorem{notation}[theorem]{Notation}

\newtheorem{proposition}[theorem]{Proposition}
\newtheorem{remark}[theorem]{Remark}

\renewcommand{\phi}{\varphi}

%----------------------

\title{A new approach for computing the distance and the diameter in circulant graphs}

\author{
  Laila Loudiki\\%\thanks{Use footnote for providing further
 %   information about author (webpage, alternative
  %  address)---\emph{not} for acknowledging funding agencies.} \\
  Department of Mathematics and Computer Science\\
  Polydisciplinary Faculty of Safi \\
  Cadi Ayyad University\\
  Safi, Morocco \\
  \texttt{laila.loudiki@ced.uca.ma} \\
  %% examples of more authors
   \And
 Mustapha Kchikech \\
  Department of Mathematics and Computer Science\\
  Polydisciplinary Faculty of Safi \\
  Cadi Ayyad University\\
  Safi, Morocco \\
  \texttt{m.kchikech@uca.ac.ma} \\
   \AND
   El Hassan Essaky \\
     Department of Mathematics and Computer Science\\
     Polydisciplinary Faculty of Safi \\
  Cadi Ayyad University\\
  Safi, Morocco \\
  \texttt{essaky@uca.ma}
  %% Polydisciplinary Faculty of Safi \\
  %% Address \\
  %% \texttt{email} \\
  %% \And
  %% Coauthor \\
  %% Affiliation \\
  %% Address \\
  %% \texttt{email} \\
  %% \And
  %% Coauthor \\
  %% Affiliation \\
  %% Address \\
  %% \texttt{email} \\
}

\begin{document}
\maketitle

\begin{abstract}
The diameter of a graph is the maximum distance among all pairs of vertices. Thus a graph $G$ has diameter $d$ if any two vertices are at distance at most $d$ and there are two vertices at distance $d$. We are interested in studying the diameter of circulant graphs $C_n(1,s)$, i.e., graphs with the set $\{0,1,\ldots, n-1\}$ of integers as vertex set and in which two distinct vertices $i,j \in \{0,1,\ldots, n-1\}$ are adjacent if and only if $|i-j|_n\in \{1,s\}$, where $2\leq s\leq \lfloor \frac{n-1}{2} \rfloor$ and $|x|_n=\min(|x|, n-|x|)$. Despite the regularity of circulant graphs, it is difficult to evaluate several parameters, in particular the distance and the diameter. To the best of our knowledge, there is no formulas providing exact values for the distance and the diameter of $C_n(1,s)$ for all $n$ and $s$.  In this context, we present in this paper a new approach, based on a  simple algorithm, that gives  exact values for the distance and the diameter of circulant graphs.
\end{abstract}

% keywords can be removed
\keywords{Diameter  \and circulant graphs}

\section{Introduction}
\label{Intro}

Circulant graphs form an important and very well-studied class of graphs \cite{monakhova2012survey}. They find applications to the computer network design, telecommunication networking, distributed computation, parallel processing architectures, and VLSI design.

For $n\in \mathbb{N}$ with $n\geq 4$, let $S=\{s_1, s_2, \ldots, s_k \}$ where $s_i$ $(i=1,2, \ldots, k)$ are positive integers such that $1\leq s_1 < s_2 < \ldots < s_k \leq \lfloor \frac{n}{2}\rfloor$. The \textit{circulant graph} $C_n(S)=(V,E)$ has the set $V=\{0, 1, \ldots, n-1 \}$ of integers as a vertex set and in which two distinct vertices $i,j\in \{0, 1, \ldots, n-1 \}$ are adjacent if and only if $|i-j|_n\in S$,  where $|x|_n=\min(|x|, n-|x|)$.

The elements of the generating set $S = (s_1, s_2, \ldots , s_k)$ are called generators (chords). A parametric description of the form $(n; S)$ completely specifies the circulant of order $n$ and dimension $k.$ Since circulants belong to the family of Cayley graphs, any undirected circulant graph $C_n(S)$ is vertex-transitive and $|S|$-regular.  Circulant graphs are also known as starpolygon graphs \cite{boesch1984circulants}, cyclic graphs \cite{david1972enumeration}, distributed loop networks \cite{bermond1995distributed}, chordal rings \cite{barriere2000fault}, multiple fixed step graphs \cite{fabrega1997fault}, point-symmetric graphs \cite{turner1967point}, in Russian as Diophantine structures \cite{monakhova1979synthesis}.

The bigger is the cardinal of $S$, the smaller is the value of the diameter of  $C_n(S)$. For example, when $S = \{1, 2, \ldots , \lfloor \frac{n}{2}\rfloor\}$, the circulant graph $C_n(S)$ is isomorphic to the complete graph $K_n$ of order $n$ and diameter $diam(K_n)=1$. Therefore, we consider, in this paper, the connection set $S=\{1, s\}$ where $s$ is an integer such that  $2\leq s\leq \lfloor \frac{n-1}{2} \rfloor,$ and we focus on $4$-regular circulant graph, $C_n(1,s)$, on $n$ vertices with respect to $S.$ It can be defined as a graph on vertex-set $V=\{i  :  i\in \mathbb{Z}_n\}$ and edge-set $E=\{(i,i\pm r) :  r\in \{1,s\}\}$. Since $gcd(n,1,s)=1$, the graph $C_n(1,s)$ is connected. The distance $d(i,j)$ between two vertices $i$ and $j$ in $C_n(1,s)$ is the length of a shortest path joining $i$ and $j$. The diameter of $C_n(1,s)$, denoted $diam(C_n(1,s))$, is the maximum distance among all pairs of vertices in $C_n(1,s)$. Here we are interested in the following problem. 
 
 \textit{\textbf{Problem.}} Given $n,s$, determine $diam(C_n(1,s))$.
 
The problem has applications in the design of interconnection networks.   The exact calculation of the
diameter of circulant graphs is a well-studied problem even for the case $|S|=2$. For theoretical results, upper and lower bounds were giving  (see \cite{monakhova2012survey, chen2005diameter}). As for algorithmic  results, this problem was first stated by Wong and Coppersmith who gave a heuristic algorithm for solving it \cite{wong1974combinatorial}. It was also studied by Zerovnik and Pisanski \cite{zerovnik1993computing}  who established an algorithm for computing the diameter of $C_n(1,s)$ with running time $O(log(n))$. However, there was no formulas giving exact values for the diameter of $C_n(1,s)$ for all $n$ and $s$.   Our approach which makes it possible to determine the distances in a circulant, allowed us to give exact formulas of the diameter of $C_n(1,s)$ for various values of $n$ and $s$.

 The rest of this paper is structured as follows. In section \ref{Algo}, we focus on studying paths joining any two vertices in $C_n(1,s)$, and we determine the equivalence class of paths existing in circulant graphs. Later, we present a formula that provides the exact value for the distance between any two vertices in $C_n(1,s)$, then, for given $n$ and $s$, we reveal an algorithm for computing the diameter of $C_n(1,s)$. For theoretical results, section \ref{diam} provides exact formulas for the diameter of circulant graphs $C_n(1,s)$ for almost all $n$ and $s$, and  gives an upper bound for the rest of the values of $n$ and $s$ (see table below where $\lambda=\frac{n}{s},$ $\gamma$ is the reminder of dividing $n$ by $s,$ $a=\frac{s}{\gamma},$ $b$ is the reminder of dividing $s$ by $\gamma,$ $p_0=\lfloor \frac{\lambda+\gamma}{2} \rfloor$, $p_1=\lfloor \frac{\gamma-b+(a+1)\lambda+1}{2} \rfloor$, $p_2=\lfloor \frac{\gamma+b+(a-1)\lambda+1}{2} \rfloor$, $p_3=\lfloor \frac{b+a\lambda+1}{2} \rfloor,$ and $e_1= \min\{\max\{p_1, p_3\},\max\{p_0, p_2\}\}$).

\begin{table}[h!]
%\caption{Diameter of $C_n(1,s)$ and $GPG(n,s)$.}
\medskip
\centering\renewcommand{\arraystretch}{1.2}
  \begin{tabular}{|c|c|c|l|}
   \cline{4-4}
  \multicolumn{2}{c}{} & & $\centering$ $\centering$ $\centering$ $\centering$ $\centering$ $\centering$ $\centering$ $\centering$ $\centering$ $\centering$ $\centering$ $\centering$ $\centering$ $\centering$ $\centering$ $\centering$ $\centering$  $\centering$ $\centering$ $\centering$ $\centering$ $\centering$ $\centering$ Diameter of $C_n(1,s)$  \\ 
  \hline 
  \multicolumn{2}{|c}{$\centering$ $\centering$ $\centering$ $\centering$ $\centering$ $\centering$ $\centering$ $\centering$ $\centering$ $\centering$ $\centering$ $\centering$ $\centering$ $\centering$ $\centering$ $\centering$ $\centering$ $\centering$  $\gamma =0$} & & $=\lfloor \frac{\lambda+s-1}{2}\rfloor$ \\ 
  \hline 
 \multirow{7}{*}{\rotatebox{90}{\centering $\lambda\geq\gamma$}}& \multirow{2}{*}{$n$ even}&$s$ odd&$=\lceil \frac{\lambda}{2} \rceil + \frac{s-1}{2} -( \min(\lceil \frac{\gamma}{2}\rceil,\lceil \frac{s-\gamma+1}{2}\rceil) -1)$\\
    \cline{3-4}
                       &  &$s$ even&$=\begin{cases} 
      \lceil \frac{\lambda}{2} \rceil + \frac{s-\gamma}{2}  & \mbox{if   $\gamma \leq 2\lceil \frac{s-2}{4} \rceil$,} \\ 
     \lfloor \frac{\lambda}{2} \rfloor +\frac{\gamma}{2} &  \mbox{otherwise.} 
 \end{cases}$\\
    \cline{2-4}
                       &  \multirow{2}{*}{$n$ odd}&$s$ odd&$=\lceil \frac{\lambda}{2} \rceil + \frac{s-1}{2} -( \min(\lceil \frac{\gamma+1}{2}\rceil,\lceil \frac{s-\gamma+2}{2}\rceil) -1)$\\
    \cline{3-4}
                     &    &$s$ even&$=\begin{cases} 
                      \lceil \frac{\lambda}{2} \rceil + \frac{s-2}{2} & \mbox{if   $\gamma =1$,} \\
      \lfloor \frac{\lambda}{2} \rfloor + \frac{s-\gamma+1}{2}  & \mbox{if   $3\leq \gamma \leq 2\lceil \frac{s}{4} \rceil -1$,} \\ 
     \lceil \frac{\lambda}{2} \rceil + \frac{\gamma-1}{2}  &  \mbox{otherwise.} 
 \end{cases}$\\
  \hline

  \multicolumn{2}{|c}{$\centering$ $\centering$ $\centering$ $\centering$ $\centering$ $\centering$ $\centering$ $\centering$ $\centering$ $\centering$ $\centering$ $\centering$ $\centering$ $\centering$ $\centering$ $\centering$ $\centering$ $\centering$  $\lambda\leq\gamma$ and $b\leq a\lambda+1$}  & & $=\begin{cases} 
p_1 -1  & \mbox{if  $p_1=p_2$  and  $(\gamma+b)(a\lambda-\lambda+1) \equiv 1 \pmod 2$,} \\
e_1  & \mbox{otherwise.} \end{cases}$ \\
   \hline

  \multicolumn{2}{|c}{$\centering$ $\centering$ $\centering$ $\centering$ $\centering$ $\centering$ $\centering$ $\centering$ $\centering$ $\centering$ $\centering$ $\centering$ $\centering$ $\centering$ $\centering$ $\centering$ $\centering$ $\centering$  All $n$ and $s$}  & & $\leq \min( \max\{ \lfloor \frac{n}{s} \rfloor+1, n-\lfloor \frac{n}{s} \rfloor s-2, ( \lfloor \frac{n}{s} \rfloor+1)s-n-1\}, \lfloor \frac{n+2}{4}\rfloor, \lfloor \frac{\lfloor \frac{n}{2} \rfloor}{s} \rfloor + \lceil \frac{s}{2} \rceil)$ \\
  
  \hline 
  \end{tabular}  
\label{tab1}
\end{table}

\section{Algorithm computing $diam(C_n(1,s))$}
\label{Algo}

Several different approaches have been used to obtain diameter results for circulant graphs. Of the methods we wish to mention, those achieved by Wong and Coppersmith \cite{wong1974combinatorial} were obtained first. Although some of their techniques were later improved upon, their method is quite accessible. Before revealing our approach, let us introduce some definitions and notations related to circulant graphs.

\begin{definition}

We denote a path leading from a vertex $i$ to another vertex $j$ in $C_n(1,s)$ by $P(i,j)$. It is represented by a couple $(\alpha a^{\pm}, \beta c^{\pm})$ where

\begin{itemize}
\item $a$ (resp. $c$) indicates that $P(i,j)$ walks through outer (resp. inner) edges;

\item $\alpha$ (resp. $\beta$) is the number of outer (resp. inner) edges;

\item $+$ (resp. $-$) means that $P(i,j)$ takes the clockwise (resp. the counterclockwise) direction.
\end{itemize}
\end{definition}

\begin{remark} Let $i,j$ be two vertices of $C_n(1,s)$. A path $P(i,j)$ can join the vertices $i$ and $j$ in several ways:
\begin{itemize}
\item $P(i,j)=(\alpha a^{\pm}, \beta c^{\pm})$ means that $P(i,j)$ walks through $\alpha$ outer edges in the clockwise $(+)$ or the counterclockwise $(-)$ direction \textbf{before} going through  $\beta$ inner edges in the clockwise $(+)$ or the counterclockwise $(-)$ direction;

\item $P(i,j)=(\beta c^{\pm}, \alpha a^{\pm})$ means that $P(i,j)$ walks through $\beta$ inner edges in the clockwise $(+)$ or the counterclockwise $(-)$ direction \textbf{before} going $\alpha$ outer edges in the clockwise $(+)$ or the counterclockwise $(-)$ direction;

\item $P(i,j)=(\alpha a^{\pm}, 0)$ means that $P(i,j)$ walks \textbf{only} through $\alpha$ outer edges in the clockwise $(+)$ or the counterclockwise $(-)$ direction;

\item $P(i,j)=(0, \beta c^{\pm})$ means that $P(i,j)$ walks \textbf{only} through $\beta$ inner edges in the clockwise $(+)$ or the counterclockwise $(-)$ direction.
\end{itemize}
\end{remark}

\begin{notation}

In $C_n(1, s)$, we represent an outer (resp. inner) edge connecting the vertices $i$ and $j$ and taking the clockwise $(+)$ or the counterclockwise $(-)$ direction by $i \leadsto^{a^\pm} j$ (resp. $i \leadsto^{c^\pm} j$).
\end{notation}

\begin{notation}

Let $i$ and $j$ be two vertices of $C_n(1, s)$. We denote the length of $P(i,j)$ by $\ell(P(i,j)),$ the number of outer edges of $P(i,j)$ by $\ell_a(P(i,j))$ and the number of inner edges of $P(i,j)$ by $\ell_c(P(i,j)).$
\end{notation}

\begin{example}

Let us focus on the graph $C_{10}(1,4)$ presented in Figure \ref{fig1}. 
%\newpage
\begin{figure}[!h]
\centering
\begin{tikzpicture}[line cap=round,line join=round,>=triangle 45,x=0.86cm,y=0.86cm]
%\clip(-3.4516582240474603,-4.570462966003956) rectangle (15.370581513701907,3.4307252915190416);
\begin{scriptsize}
\draw [line width=0.8pt] (1.860739087062379,1.351906080272688)-- (-2.3,0.);
\draw [line width=0.8pt] (0.7107390870623791,2.1874299874788528)-- (-1.860739087062379,-1.3519060802726879);
\draw [line width=0.8pt] (-0.7107390870623789,2.187429987478853)-- (-0.7107390870623793,-2.1874299874788528);
\draw [line width=0.8pt] (-1.8607390870623788,1.3519060802726883)-- (0.7107390870623785,-2.187429987478853);
\draw [line width=0.8pt] (-2.3,0.)-- (1.8607390870623788,-1.3519060802726885);
\draw [line width=0.8pt] (-1.860739087062379,-1.3519060802726879)-- (2.3,0.);
\draw [line width=0.8pt] (-0.7107390870623793,-2.1874299874788528)-- (1.860739087062379,1.351906080272688);
\draw [line width=0.8pt] (0.7107390870623785,-2.187429987478853)-- (0.7107390870623791,2.1874299874788528);
\draw [line width=0.8pt] (1.8607390870623788,-1.3519060802726885)-- (-0.7107390870623789,2.187429987478853);
\draw [line width=0.8pt] (2.3,0.)-- (-1.8607390870623788,1.3519060802726883);
\draw [line width=0.8pt] (0.7107390870623791,2.1874299874788528)-- (-1.860739087062379,-1.3519060802726879);
\draw [line width=0.8pt] (-0.7107390870623789,2.187429987478853)-- (-0.7107390870623793,-2.1874299874788528);
\draw [line width=0.8pt] (-1.8607390870623788,1.3519060802726883)-- (0.7107390870623785,-2.187429987478853);
\draw [line width=0.8pt] (-2.3,0.)-- (1.8607390870623788,-1.3519060802726885);
\draw [line width=0.8pt] (-1.860739087062379,-1.3519060802726879)-- (2.3,0.);
\draw [line width=0.8pt] (-0.7107390870623793,-2.1874299874788528)-- (1.860739087062379,1.351906080272688);
\draw [line width=0.8pt] (0.7107390870623785,-2.187429987478853)-- (0.7107390870623791,2.1874299874788528);
\draw [line width=0.8pt] (1.8607390870623788,-1.3519060802726885)-- (-0.7107390870623789,2.187429987478853);
\draw [line width=0.8pt] (2.3,0.)-- (-1.8607390870623788,1.3519060802726883);
\draw [line width=0.8pt] (-0.7107390870623789,2.187429987478853)-- (-0.7107390870623793,-2.1874299874788528);
\draw [line width=0.8pt] (-1.8607390870623788,1.3519060802726883)-- (0.7107390870623785,-2.187429987478853);
\draw [line width=0.8pt] (-2.3,0.)-- (1.8607390870623788,-1.3519060802726885);
\draw [line width=0.8pt] (-1.860739087062379,-1.3519060802726879)-- (2.3,0.);
\draw [line width=0.8pt] (-0.7107390870623793,-2.1874299874788528)-- (1.860739087062379,1.351906080272688);
\draw [line width=0.8pt] (0.7107390870623785,-2.187429987478853)-- (0.7107390870623791,2.1874299874788528);
\draw [line width=0.8pt] (1.8607390870623788,-1.3519060802726885)-- (-0.7107390870623789,2.187429987478853);
\draw [line width=0.8pt] (2.3,0.)-- (-1.8607390870623788,1.3519060802726883);
\draw [line width=0.8pt] (-1.8607390870623788,1.3519060802726883)-- (0.7107390870623785,-2.187429987478853);
\draw [line width=0.8pt] (-2.3,0.)-- (1.8607390870623788,-1.3519060802726885);
\draw [line width=0.8pt] (-1.860739087062379,-1.3519060802726879)-- (2.3,0.);
\draw [line width=0.8pt] (-0.7107390870623793,-2.1874299874788528)-- (1.860739087062379,1.351906080272688);
\draw [line width=0.8pt] (0.7107390870623785,-2.187429987478853)-- (0.7107390870623791,2.1874299874788528);
\draw [line width=0.8pt] (1.8607390870623788,-1.3519060802726885)-- (-0.7107390870623789,2.187429987478853);
\draw [line width=0.8pt] (2.3,0.)-- (-1.8607390870623788,1.3519060802726883);
\draw [line width=0.8pt] (-2.3,0.)-- (1.8607390870623788,-1.3519060802726885);
\draw [line width=0.8pt] (-1.860739087062379,-1.3519060802726879)-- (2.3,0.);
\draw [line width=0.8pt] (-0.7107390870623793,-2.1874299874788528)-- (1.860739087062379,1.351906080272688);
\draw [line width=0.8pt] (0.7107390870623785,-2.187429987478853)-- (0.7107390870623791,2.1874299874788528);
\draw [line width=0.8pt] (1.8607390870623788,-1.3519060802726885)-- (-0.7107390870623789,2.187429987478853);
\draw [line width=0.8pt] (2.3,0.)-- (-1.8607390870623788,1.3519060802726883);
\draw [line width=0.8pt] (-1.860739087062379,-1.3519060802726879)-- (2.3,0.);
\draw [line width=0.8pt] (-0.7107390870623793,-2.1874299874788528)-- (1.860739087062379,1.351906080272688);
\draw [line width=0.8pt] (0.7107390870623785,-2.187429987478853)-- (0.7107390870623791,2.1874299874788528);
\draw [line width=0.8pt] (1.8607390870623788,-1.3519060802726885)-- (-0.7107390870623789,2.187429987478853);
\draw [line width=0.8pt] (2.3,0.)-- (-1.8607390870623788,1.3519060802726883);
\draw [line width=0.8pt] (-0.7107390870623793,-2.1874299874788528)-- (1.860739087062379,1.351906080272688);
\draw [line width=0.8pt] (0.7107390870623785,-2.187429987478853)-- (0.7107390870623791,2.1874299874788528);
\draw [line width=0.8pt] (1.8607390870623788,-1.3519060802726885)-- (-0.7107390870623789,2.187429987478853);
\draw [line width=0.8pt] (2.3,0.)-- (-1.8607390870623788,1.3519060802726883);
\draw [line width=0.8pt] (0.7107390870623785,-2.187429987478853)-- (0.7107390870623791,2.1874299874788528);
\draw [line width=0.8pt] (1.8607390870623788,-1.3519060802726885)-- (-0.7107390870623789,2.187429987478853);
\draw [line width=0.8pt] (2.3,0.)-- (-1.8607390870623788,1.3519060802726883);
\draw [line width=0.8pt] (1.8607390870623788,-1.3519060802726885)-- (-0.7107390870623789,2.187429987478853);
\draw [line width=0.8pt] (2.3,0.)-- (-1.8607390870623788,1.3519060802726883);
\draw [line width=0.8pt] (2.3,0.)-- (-1.8607390870623788,1.3519060802726883);
\draw [line width=1.2pt] (1.860739087062379,1.351906080272688)-- (0.7107390870623791,2.1874299874788528);
\draw [line width=1.2pt] (0.7107390870623791,2.1874299874788528)-- (-0.7107390870623789,2.187429987478853);
\draw [line width=1.2pt] (-0.7107390870623789,2.187429987478853)-- (-1.8607390870623788,1.3519060802726883);
\draw [line width=1.2pt] (-1.8607390870623788,1.3519060802726883)-- (-2.3,0.);
\draw [line width=1.2pt] (-2.3,0.)-- (-1.860739087062379,-1.3519060802726879);
\draw [line width=1.2pt] (-1.860739087062379,-1.3519060802726879)-- (-0.7107390870623793,-2.1874299874788528);
\draw [line width=1.2pt] (-0.7107390870623793,-2.1874299874788528)-- (0.7107390870623785,-2.187429987478853);
\draw [line width=1.2pt] (0.7107390870623785,-2.187429987478853)-- (1.8607390870623788,-1.3519060802726885);
\draw [line width=1.2pt] (1.8607390870623788,-1.3519060802726885)-- (2.3,0.);
\draw [line width=1.2pt] (2.3,0.)-- (1.860739087062379,1.351906080272688);
\draw [line width=1.2pt] (0.7107390870623791,2.1874299874788528)-- (-0.7107390870623789,2.187429987478853);
\draw [line width=1.2pt] (-0.7107390870623789,2.187429987478853)-- (-1.8607390870623788,1.3519060802726883);
\draw [line width=1.2pt] (-1.8607390870623788,1.3519060802726883)-- (-2.3,0.);
\draw [line width=1.2pt] (-2.3,0.)-- (-1.860739087062379,-1.3519060802726879);
\draw [line width=1.2pt] (-1.860739087062379,-1.3519060802726879)-- (-0.7107390870623793,-2.1874299874788528);
\draw [line width=1.2pt] (-0.7107390870623793,-2.1874299874788528)-- (0.7107390870623785,-2.187429987478853);
\draw [line width=1.2pt] (0.7107390870623785,-2.187429987478853)-- (1.8607390870623788,-1.3519060802726885);
\draw [line width=1.2pt] (1.8607390870623788,-1.3519060802726885)-- (2.3,0.);
\draw [line width=1.2pt] (2.3,0.)-- (1.860739087062379,1.351906080272688);
\draw [line width=1.2pt] (-0.7107390870623789,2.187429987478853)-- (-1.8607390870623788,1.3519060802726883);
\draw [line width=1.2pt] (-1.8607390870623788,1.3519060802726883)-- (-2.3,0.);
\draw [line width=1.2pt] (-2.3,0.)-- (-1.860739087062379,-1.3519060802726879);
\draw [line width=1.2pt] (-1.860739087062379,-1.3519060802726879)-- (-0.7107390870623793,-2.1874299874788528);
\draw [line width=1.2pt] (-0.7107390870623793,-2.1874299874788528)-- (0.7107390870623785,-2.187429987478853);
\draw [line width=1.2pt] (0.7107390870623785,-2.187429987478853)-- (1.8607390870623788,-1.3519060802726885);
\draw [line width=1.2pt] (1.8607390870623788,-1.3519060802726885)-- (2.3,0.);
\draw [line width=1.2pt] (2.3,0.)-- (1.860739087062379,1.351906080272688);
\draw [line width=1.2pt] (-1.8607390870623788,1.3519060802726883)-- (-2.3,0.);
\draw [line width=1.2pt] (-2.3,0.)-- (-1.860739087062379,-1.3519060802726879);
\draw [line width=1.2pt] (-1.860739087062379,-1.3519060802726879)-- (-0.7107390870623793,-2.1874299874788528);
\draw [line width=1.2pt] (-0.7107390870623793,-2.1874299874788528)-- (0.7107390870623785,-2.187429987478853);
\draw [line width=1.2pt] (0.7107390870623785,-2.187429987478853)-- (1.8607390870623788,-1.3519060802726885);
\draw [line width=1.2pt] (1.8607390870623788,-1.3519060802726885)-- (2.3,0.);
\draw [line width=1.2pt] (2.3,0.)-- (1.860739087062379,1.351906080272688);
\draw [line width=1.2pt] (-2.3,0.)-- (-1.860739087062379,-1.3519060802726879);
\draw [line width=1.2pt] (-1.860739087062379,-1.3519060802726879)-- (-0.7107390870623793,-2.1874299874788528);
\draw [line width=1.2pt] (-0.7107390870623793,-2.1874299874788528)-- (0.7107390870623785,-2.187429987478853);
\draw [line width=1.2pt] (0.7107390870623785,-2.187429987478853)-- (1.8607390870623788,-1.3519060802726885);
\draw [line width=1.2pt] (1.8607390870623788,-1.3519060802726885)-- (2.3,0.);
\draw [line width=1.2pt] (2.3,0.)-- (1.860739087062379,1.351906080272688);
\draw [line width=1.2pt] (-1.860739087062379,-1.3519060802726879)-- (-0.7107390870623793,-2.1874299874788528);
\draw [line width=1.2pt] (-0.7107390870623793,-2.1874299874788528)-- (0.7107390870623785,-2.187429987478853);
\draw [line width=1.2pt] (0.7107390870623785,-2.187429987478853)-- (1.8607390870623788,-1.3519060802726885);
\draw [line width=1.2pt] (1.8607390870623788,-1.3519060802726885)-- (2.3,0.);
\draw [line width=1.2pt] (2.3,0.)-- (1.860739087062379,1.351906080272688);
\draw [line width=1.2pt] (-0.7107390870623793,-2.1874299874788528)-- (0.7107390870623785,-2.187429987478853);
\draw [line width=1.2pt] (0.7107390870623785,-2.187429987478853)-- (1.8607390870623788,-1.3519060802726885);
\draw [line width=1.2pt] (1.8607390870623788,-1.3519060802726885)-- (2.3,0.);
\draw [line width=1.2pt] (2.3,0.)-- (1.860739087062379,1.351906080272688);
\draw [line width=1.2pt] (0.7107390870623785,-2.187429987478853)-- (1.8607390870623788,-1.3519060802726885);
\draw [line width=1.2pt] (1.8607390870623788,-1.3519060802726885)-- (2.3,0.);
\draw [line width=1.2pt] (2.3,0.)-- (1.860739087062379,1.351906080272688);
\draw [line width=1.2pt] (1.8607390870623788,-1.3519060802726885)-- (2.3,0.);
\draw [line width=1.2pt] (2.3,0.)-- (1.860739087062379,1.351906080272688);
\draw [line width=1.2pt] (2.3,0.)-- (1.860739087062379,1.351906080272688);
\draw (0.6739544712378257,2.5972681813603957) node[anchor=north west] {\textbf{0}};
\draw (-1,2.6) node[anchor=north west] {\textbf{9}};
\draw (1.9,1.6) node[anchor=north west] {\textbf{1}};
\draw (2.4,0.1385697063923914) node[anchor=north west] {\textbf{2}};
\draw (2,-1.3477621400571933) node[anchor=north west] {\textbf{3}};
\draw (0.5,-2.43) node[anchor=north west] {\textbf{4}};
\draw (-0.8957230862276198,-2.417365431427455) node[anchor=north west] {\textbf{5}};
\draw (-2.4,-1.2644164290413287) node[anchor=north west] {\textbf{6}};
\draw (-2.7293287285766357,0.2) node[anchor=north west] {\textbf{7}};
\draw (-2.35,1.6387925046779535) node[anchor=north west] {\textbf{8}};
\draw [fill=black] (1.860739087062379,1.351906080272688) circle (2.0pt);
\draw [fill=black] (0.7107390870623791,2.1874299874788528) circle (2.0pt);
\draw [fill=black] (-0.7107390870623789,2.187429987478853) circle (2.0pt);
\draw [fill=black] (-1.8607390870623788,1.3519060802726883) circle (2.0pt);
\draw [fill=black] (-2.3,0.) circle (2.0pt);
\draw [fill=black] (-1.860739087062379,-1.3519060802726879) circle (2.0pt);
\draw [fill=black] (-0.7107390870623793,-2.1874299874788528) circle (2.0pt);
\draw [fill=black] (0.7107390870623785,-2.187429987478853) circle (2.0pt);
\draw [fill=black] (1.8607390870623788,-1.3519060802726885) circle (2.0pt);
\draw [fill=black] (2.3,0.) circle (2.0pt);
\end{scriptsize}
\end{tikzpicture}
\caption{The circulant graph $C_{10}(1,4).$}
\label{fig1}
\end{figure}
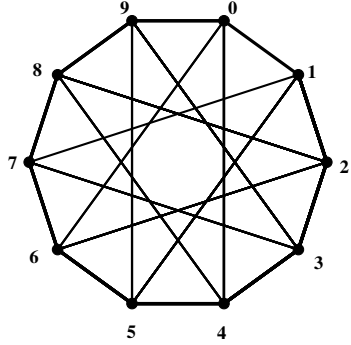

If we chose an arbitrary vertex, for instance the vertex $6.$ there exist various paths leading from $0$ to $6.$ We give some of them as an example in Table \ref{tab2},

\begin{table}[!h]
\centering
\begin{tabular}{|l|c|c|c|c|}

\hline

$P(0,6)$ & Representation of $P(0,6)$  & $\ell_a(P(0,6))$ & $\ell_c(P(0,6))$ & $\ell(P(0,6))$\\

\hline

$=(2a^+,1c^+)$ & $0 \leadsto^{a^+} 1 \leadsto^{a^+} 2 \leadsto^{c^+} 6$  & 2 & 1 & 3\\

\hline

$=(2a^-,2c^+)$ & $0 \leadsto^{a^-} 9 \leadsto^{a^-} 8 \leadsto^{c^+} 2 \leadsto^{c^+} 6$  & 2 & 2 & 4\\

\hline

$=(0,4c^+)$ & $0 \leadsto^{c^+} 4 \leadsto^{c^+} 8 \leadsto^{c^+} 2 \leadsto^{c^+} 6$  & 0 & 4 & 4\\

\hline

$=(0,c^-)$ & $0 \leadsto^{c^-} 6$ & 0 & 1 & 1 \\

\hline
\end{tabular}
\caption{Examples of paths leading from the vertex $0$ to the vertex $6$ in $C_{10}(1,4).$}
\label{tab2}
\end{table}
\end{example}

%\newpage

\begin{definition}
Let $i,j \in V(C_n(1,s)).$ Let $P(i,j)$ and $Q(i,j)$ be two paths in $C_n(1,s)$. $P(i,j)$ is equivalent to $Q(i,j)$, denoted $P(i,j) \approx Q(i,j),$ if and only if $\ell(P(i,j))=\ell(Q(i,j)),$ $\ell_a(P(i,j))=\ell_a(Q(i,j)),$ $\ell_c(P(i,j))=\ell_c(Q(i,j)),$ and that $P(i,j)$ and $Q(i,j)$ take the same direction.
\end{definition}

\begin{lemma}
Let $i,j \in V(C_n(1,s)).$ Let $P(i,j)$ and $Q(i,j)$ be two paths in $C_n(1,s)$. $\approx$ is an equivalence relation on the set of paths in $C_n(1,s)$.
\end{lemma}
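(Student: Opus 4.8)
The plan is to verify the three defining properties of an equivalence relation—reflexivity, symmetry, and transitivity—directly from the definition of $\approx$. The definition says $P(i,j) \approx Q(i,j)$ exactly when the four data $\ell$, $\ell_a$, $\ell_c$, and the direction of the two paths coincide. Since $\approx$ is built entirely out of \emph{equalities} of these quantities (and equality of direction), each property of $\approx$ should reduce to the corresponding property of ordinary equality, applied componentwise.

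First I would dispatch reflexivity: for any path $P(i,j)$ we trivially have $\ell(P(i,j))=\ell(P(i,j))$, $\ell_a(P(i,j))=\ell_a(P(i,j))$, $\ell_c(P(i,j))=\ell_c(P(i,j))$, and $P(i,j)$ takes the same direction as itself, so $P(i,j)\approx P(i,j)$. Next, symmetry: assume $P(i,j)\approx Q(i,j)$. Then all four conditions hold with $P$ and $Q$ in those roles; since equality is symmetric, $\ell(Q(i,j))=\ell(P(i,j))$ and likewise for $\ell_a$, $\ell_c$, and the two paths take the same direction, hence $Q(i,j)\approx P(i,j)$. Finally, transitivity: suppose $P(i,j)\approx Q(i,j)$ and $Q(i,j)\approx R(i,j)$. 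From the first we get $\ell(P(i,j))=\ell(Q(i,j))$, $\ell_a(P(i,j))=\ell_a(Q(i,j))$, $\ell_c(P(i,j))=\ell_c(Q(i,j))$, and $P$, $Q$ share a direction; from the second, the analogous equalities between $Q$ and $R$. Chaining the equalities (transitivity of $=$) gives $\ell(P(i,j))=\ell(R(i,j))$, and similarly for $\ell_a$ and $\ell_c$; and since "taking the same direction" is itself an equivalence-type relation on paths, $P$ and $R$ take the same direction. Therefore $P(i,j)\approx R(i,j)$.

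There is essentially no hard part here: the lemma is a formal bookkeeping statement, and the only thing to be slightly careful about is the notion of "direction." I would note explicitly that "taking the same direction" is understood as an honest equivalence relation on paths (two paths agree on the sign pattern of their outer steps and on the sign pattern of their inner steps), so that reflexivity, symmetry and transitivity of that component are themselves available; with that observation in place, all three properties of $\approx$ follow by a componentwise appeal to the corresponding properties of equality. I would close by remarking that this justifies speaking of the equivalence class of a path and, in particular, makes the later reduction to a canonical representative in each class well defined.
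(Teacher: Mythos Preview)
Your proposal is correct and follows the same approach as the paper, which simply asserts that it is easy to verify that $\approx$ is an equivalence relation on the set $\mathcal{P}$ of paths in $C_n(1,s)$. You have merely made explicit the routine verification of reflexivity, symmetry, and transitivity that the paper leaves to the reader.
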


\begin{proof}
Let $\mathcal{P}$ be the set of paths in $C_n(1,s)$. It is easy to verify that $\approx$ is an equivalence relation on $\mathcal{P}.$
\end{proof}

\begin{lemma}
Let $i$ and $j$ be two vertices of $C_n(1,s).$ We have
$$
P(i,j)\approx\begin{cases}
			P(0,j-i), & \text{if $ i< j$}\\
            P(0,n-i+j), & \text{otherwise}
		 \end{cases}
$$
\end{lemma}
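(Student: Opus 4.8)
The statement is really a vertex-transitivity observation translated into the $(\alpha a^\pm,\beta c^\pm)$ bookkeeping introduced above, so the plan is to exhibit an explicit bijection between paths from $i$ to $j$ and paths from $0$ to $j-i$ (indices mod $n$) that preserves length, the number of outer edges, the number of inner edges, and the direction of each step; that bijection then carries any path on the left into an equivalent one on the right, which is exactly the claimed $\approx$. First I would recall that in $C_n(1,s)$ the map $\tau_k\colon x\mapsto x+k \pmod n$ is a graph automorphism for every $k\in\mathbb Z_n$: it sends an outer edge $\{x,x+1\}$ to the outer edge $\{x+k,x+1+k\}$ and an inner edge $\{x,x+s\}$ to the inner edge $\{x+k,x+s+k\}$, and it clearly preserves the clockwise/counterclockwise sense of a directed step. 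Taking $k=-i$, the automorphism $\tau_{-i}$ maps a path $i\leadsto\cdots\leadsto j$ step by step to a path $0\leadsto\cdots\leadsto (j-i \bmod n)$, replacing each elementary move $x\leadsto^{a^\pm}y$ by $x-i\leadsto^{a^\pm}y-i$ and each $x\leadsto^{c^\pm}y$ by $x-i\leadsto^{c^\pm}y-i$.

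Next I would read off that this step-by-step translation changes none of the relevant data: the number of edges is the same, an outer edge stays outer and an inner edge stays inner, and the sign (direction) attached to each edge is unchanged, so if $P(i,j)$ is represented by $(\alpha a^\pm,\beta c^\pm)$ (in either order, or degenerate) then its image is a path with representation $(\alpha a^\pm,\beta c^\pm)$ between $0$ and $j-i \bmod n$; by the definition of $\approx$ the image is therefore equivalent to $P(i,j)$. Finally I would unwind the residue: if $i<j$ then $j-i\in\{0,\dots,n-1\}$ already, and the endpoint is the vertex $j-i$, giving $P(i,j)\approx P(0,j-i)$; if $i\ge j$ then $j-i$ is negative and its representative in $\{0,\dots,n-1\}$ is $n-i+j$, giving $P(i,j)\approx P(0,n-i+j)$. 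Since $\tau_{-i}$ is a bijection on the path set (with inverse $\tau_{i}$), the correspondence goes both ways, so each equivalence class of paths from $i$ to $j$ matches an equivalence class of paths from $0$ to the shifted endpoint, which is all the statement asserts.

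The only point that needs a little care — and the one I would write out most explicitly — is the degenerate cases and the direction convention: one must check that when a path uses only outer edges or only inner edges (the $(\alpha a^\pm,0)$ and $(0,\beta c^\pm)$ cases) the translation still lands in the same degenerate type, and that "taking the same direction" in the definition of $\approx$ is respected, i.e.\ that $\tau_{-i}$ does not silently swap $+$ and $-$. Both are immediate once one notes $\tau_{-i}$ is an additive shift on $\mathbb Z_n$, hence orientation-preserving on the cycle of outer edges and on each cycle of inner edges; I do not expect any genuine obstacle here, only the need to phrase the shift-of-indices argument cleanly so that the mod-$n$ wrap in the second case of the formula is visibly accounted for.
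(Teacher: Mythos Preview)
Your proposal is correct and follows essentially the same approach as the paper: the paper's proof simply invokes vertex-transitivity of $C_n(1,s)$ to translate $P(i,j)$ to $P(0,k)$ with $k=j-i$ or $k=n-i+j$, which is precisely your explicit $\tau_{-i}$ argument written out in more detail. Your version is more careful about checking that edge types and directions are preserved, but the underlying idea is identical.
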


\begin{proof}
The circulant graph $C_n(1,s)$ is vertex-transitive. Thus, for any two vertices $i$ and $j$ of $C_n(1,s),$ the path $P(i,j)$ can be translated into the path $P(0,k)$ where 
$$
k=\begin{cases}
			j-i, & \text{if $ i< j$}\\
            n-i+j, & \text{otherwise}
		 \end{cases}
$$
\end{proof}

\begin{remark}
For the rest of this work,  we consider paths leading from $0$ to a vertex $i$ of $C_n(1,s)$, denoted $P(i).$ We denote the distance of $P(i)$ by $d(i)$.
\end{remark}

\begin{example}
Let us take the graph  $C_{10}(1,4)$ presented in Figure \ref{fig1}. Here is an example of some equivalent paths in $C_{10}(1,4).$
\begin{itemize}
\item $P(6,9)=(1a^-,1c^+)$ is equivalent to $P(3)=(1a^-,1c^+);$

\item $P(6,2)=(0,1c^-)$ is equivalent to $P(6)=(0,1c^-);$
\end{itemize}
\end{example}

\begin{lemma} \label{Key}
For any vertex $i$ of $C_n(1,s)$, there exists a path leading from $0$ to $i$ by walking through all its outer edges \textbf{before} entering to its inner edges or vice versa.
\end{lemma}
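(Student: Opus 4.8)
The plan is to exploit the fact that the two chord–moves $\pm 1$ and $\pm s$ live in the \emph{abelian} group $\mathbb{Z}_n$, so the vertex reached after a sequence of steps depends only on the multiset of steps, not on their order. Concretely, I would start from an arbitrary path $P(i)$ from $0$ to $i$, read off its steps as a word in $\{+1,-1,+s,-s\}$, and reorder this word so that all the outer steps $(\pm 1)$ come first and all the inner steps $(\pm s)$ afterwards (or the other way around, for the ``vice versa'' case). Since addition in $\mathbb{Z}_n$ is commutative, the reordered word is a walk that again ends at $i$ and uses exactly the same number of outer and inner edges as $P(i)$, hence has the same length.

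Two clean-ups are then needed. First, if the step word contains both a $+1$ and a $-1$ (respectively a $+s$ and a $-s$), delete such a pair: the endpoint is unchanged and the length drops by $2$. Iterating, I may assume the word uses outer steps in a single direction and inner steps in a single direction, so the reordered walk has exactly the shape $(\alpha a^{\pm},\beta c^{\pm})$ prescribed by the definition of $P(i,j)$. Note in particular that a \emph{shortest} path from $0$ to $i$ cannot contain such a cancelling pair, so for a shortest path this step is automatic and the length is preserved.

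Second — and this is the only real obstacle — I must ensure the reordered word is a genuine (simple) path and not merely a walk. If it revisits a vertex it contains a closed sub-walk; deleting that sub-walk yields a strictly shorter walk from $0$ to $i$, and since every walk contains a path between its endpoints (erase cycles until none remain), this produces a $0$–$i$ path strictly shorter than $P(i)$. Running the whole procedure starting from a \emph{shortest} path $P(i)$ therefore gives a contradiction unless the reordered word is already simple; hence it is simple, and it is the desired grouped path, of the same minimum length. (If one only wants \emph{some} grouped path rather than a shortest one, the same argument applies after first passing to a shortest path, or one simply takes the all-outer path $0,1,\dots,i$.) The ``vice versa'' statement is obtained by performing the inner steps first in the reordering; everything else is identical.

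I expect the bookkeeping around simplicity (walk versus path) to be the hard part, together with checking that cancelling opposite steps and erasing cycles interact correctly with the grouping. Both are handled uniformly by invoking minimality of $P(i)$, which forbids any strictly shorter $0$–$i$ path, so the commutativity argument alone does the essential work.
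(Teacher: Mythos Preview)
Your proposal is correct and follows essentially the same route as the paper: start from an arbitrary walk, use commutativity of $\mathbb{Z}_n$ to group the outer steps together and the inner steps together, and cancel opposite pairs so that each type of step goes in a single direction. The paper writes this cancellation directly as $(\alpha-\beta)a^+$ or $(\beta-\alpha)a^-$, etc., and observes that the resulting grouped walk is no longer than the original, which is exactly your argument phrased in coordinates.

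The only place you go beyond the paper is in worrying about simplicity (walk versus path) and invoking minimality to rule out repeated vertices. The paper does not address this at all in the proof of the lemma; in fact, a later Remark explicitly concedes that some of the objects it calls ``paths'' in the class $\mathcal{C}$ may actually be walks, to be replaced by shorter equivalent ones. So in this paper ``path'' is being used loosely, and your extra care, while sound, is not needed for how the lemma is subsequently applied.
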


\begin{proof}
Let $i$ be a vertex of $C_n(1,s)$. Without loss of generality, if we assume that $W$ is an arbitrary walk  in $C_n(1,s)$ leading from $0$ to $i$ by taking $ \alpha a^{+}, $ $ \beta a^{-},$ $ \gamma c^{+}$  and $ \lambda c^{-}. $ Then, it exists a path $P(i)$, walking through all its outer edges \textit{\textbf{before}} entering to its inner edges or vice versa, represented as follows.

\begin{align*}
P(i) &= 
 \begin{cases} 
      ((\alpha-\beta) a^+ , (\gamma -\lambda) c^+) & \qquad \mbox{if } \quad \alpha\geq\beta \ \mbox{ and  } \ \gamma \geq\lambda, \\ 
      ((\beta-\alpha) a^- , (\lambda-\gamma) c^-) & \qquad \mbox{if } \quad \alpha\leq\beta \ \mbox{ and  } \ \gamma \leq\lambda, \\
      ((\alpha-\beta) a^+ , (\lambda-\gamma) c^-) & \qquad \mbox{if } \quad \alpha\geq\beta \ \mbox{ and } \ \gamma \leq\lambda, \\ 
      ((\beta-\alpha) a^- , (\gamma-\lambda) c^+) & \qquad \mbox{if } \quad \alpha\leq\beta \ \mbox{ and } \ \gamma \geq\lambda. 
 \end{cases}
\end{align*}\\ Thus, $\ell(W)=\alpha + \beta + \gamma + \lambda$ and, \begin{align*}
\ell(P(i)) &= 
 \begin{cases} 
      (\alpha-\beta) + (\gamma -\lambda) & \qquad \mbox{if } \quad \alpha\geq\beta \ \mbox{ and  } \ \gamma \geq\lambda, \\ 
      (\beta-\alpha) + (\lambda-\gamma)  & \qquad \mbox{if } \quad \alpha\leq\beta \ \mbox{ and  } \ \gamma \leq\lambda, \\
      (\alpha-\beta) + (\lambda-\gamma)  & \qquad \mbox{if } \quad \alpha\geq\beta \ \mbox{ and } \ \gamma \leq\lambda, \\ 
      (\beta-\alpha) + (\gamma-\lambda)  & \qquad \mbox{if } \quad \alpha\leq\beta \ \mbox{ and } \ \gamma \geq\lambda. 
 \end{cases}
\end{align*}

It is obvious that $\ell(P(i))\leq \ell(W).$ Thus, for all $i\in V(C_n(1,s))$, there exists a path leading from $0$ to $i$ by walking through all its outer edges \textbf{before} entering to its inner edges or vice versa.
\end{proof}

\begin{example}
Let us take the graph  $C_{10}(1,4)$ presented in Figure \ref{fig1}. Without loss of generality, if we assume that $W$ is an arbitrary walk  in $C_n(1,s)$ leading from $0$ to $6$ by taking $1 a^{+}, $ $ 2 c^{+}$,  $ 1 a^{-},$   and $ 3 c^{-}. $ Then,  $W$ is represented as follows. $$0 \leadsto^{a^+} 1 \leadsto^{c^+} 5 \leadsto^{c^+} 9 \leadsto^{a^-} 8 \leadsto^{c^-}  4 \leadsto^{c^-} 0 \leadsto^{c^-} 6.$$  By Lemma \ref{Key}, there exists a path leading from $0$ to $6$ defined by $P(6)=((1-1)a^+ ,(3-2)c^-)=(0,1c^-).$ We have $\ell(P(6)=1<\ell(w)=7$. Thus, for the vertex $6$, there exists a path $P(6)$ walking through all its outer edges \textit{\textbf{before}} entering to its inner edges. 
\end{example}

\begin{notation}
Let $t$ be an integer such that $1\leq t\leq \frac{s}{gcd(n,s)}.$  we denote the integers $q,q_t,\bar{q_t},r,r_t,$ and $\bar{r_t}$ by
\begin{itemize}
\item $i\equiv r \pmod{s}$ and $q:=\lfloor \frac{i}{s}\rfloor$,

\item $tn+i\equiv r_t \pmod{s}$ and $q_t:=\lfloor \frac{tn+i}{s}\rfloor$,

\item $tn-i\equiv \bar{r}_t \pmod{s}$ and $\bar{q}_t:=\lfloor \frac{tn-i}{s}\rfloor$.
\end{itemize}
%where  $0\leq r,r_t ,\bar{r_t} <s$.
\end{notation}

\begin{remark}
In $C_n(1,s)$, the inner edges form $gcd(n, s)$ cycles of length $\frac{n}{gcd(n,s)}.$
\end{remark}

\begin{remark}
Let $i$ be a vertex of $C_n(1,s)$.
\begin{itemize}
\item If $gcd(n,s)=1,$ then the number of inner edges in $P(i)$ cannot be greater than $n-1$, i.e., $1\leq \bar{q}_{t} \leq n-1$ and $ 1 \leq q_{t} \leq n-1.$ Therefore, $1\leq t \leq s.$

\item Otherwise,  if $gcd(n,s)\neq 1$ then  $1\leq \bar{q}_{t}  < \frac{n}{gcd(n,s)}$ and $1\leq q_{t}  < \frac{n}{gcd(n,s)}.$  Therefore, $1\leq t \leq \frac{s}{gcd(n,s)}.$
\end{itemize}
\end{remark}

Our goal lies in providing formulas giving exact values for the diameter of $C_n(1,s)$ for all values of $n$ and $s$. Since by definition, the diameter is the maximum distance among all pair of vertices  and, the distance is the shortest path, we analyzed the behavior of paths in $C_n(1,s)$, and we found out that there is plenty of paths leading from $0$ to a vertex $i$ in $C_n(1,s)$ that are equivalent. Our approach lies in determining this equivalent class of paths in $C_n(1,s)$.

\begin{lemma}
\label{path}
Let $t$ be an integer such that $1\leq t\leq \frac{s}{gcd(n,s)}.$ For every vertex $i$  of $C_n(1,s)$, there exists a class of pairwise non-equivalent paths  $\mathcal{C}= \{P^1 (i) , P^2 (i), P^{1,t}(i),$ $ P^{2,t}(i), P^{3,t}(i), P^{4,t}(i) \}$ such that

\vskip 0.1cm  $P^1 (i)=(ra^+,qc^+) \qquad \qquad \quad \qquad \ \  \ P^2 (i)=((s-r)a^-,(q+1)c^+) \qquad \ \  \  P^{1,t}(i)=(r_{t}a^+,q_{t}c^+);$

\vskip 0.1cm  $
  P^{2,t}(i)=((s-r_{t})a^-,(q_{t}+1)c^+) \qquad \ \ P^{3,t}(i)=(\bar{r}_{t}a^-,\bar{q}_{t}c^-) \qquad \qquad \qquad P^{4,t}(i)=((s-\bar{r}_{t})a^+,(\bar{q}_{t}+1)c^-). $ \\ The length of these paths is consecutively equals to
\vskip 0.1cm   $ \ell^1 (i)=r+ q \quad \quad  \qquad \qquad \quad \ \ \ell^2 (i)=1+s-r+ q \quad \quad \quad \ell^{1,t} (i)=r_{t}+ q_{t}   ;$

\vskip 0.1cm  $ \ell^{2,t} (i)=1+s-r_{t}+ q_{t} \quad \quad \quad \ell^{3,t} (i)=\bar{r}_{t}+ \bar{q}_{t} \quad \quad \qquad \ \ \ell^{4,t} (i)=1+s-\bar{r}_{t}+ \bar{q}_{t}$.
\end{lemma}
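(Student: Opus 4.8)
The plan is to prove the three things packaged in the statement: that each of the six symbols $P^1(i),P^2(i),P^{1,t}(i),P^{2,t}(i),P^{3,t}(i),P^{4,t}(i)$ names an actual $0$–$i$ walk in $C_n(1,s)$, that its length is the stated value, and that the six are pairwise non-equivalent. The conceptual backbone is Lemma \ref{Key}: since every path from $0$ to $i$ is equivalent to one that traverses all of its outer edges before all of its inner edges (or conversely), the only remaining freedom is the pair of directions of the two blocks and how many full turns around the cycle the walk makes; the six members of $\mathcal{C}$ are precisely the canonical representatives of these possibilities, with $t=0$ giving $P^1,P^2$ and the fixed winding parameter $t$ giving the other four.

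First I would check that each symbol is a $0$–$i$ walk, which is pure modular arithmetic from the defining congruences. With $i=qs+r$, the walk $(ra^+,qc^+)$ lands on $r+qs=i$ and $((s-r)a^-,(q+1)c^+)$ lands on $-(s-r)+(q+1)s=r+qs=i$; with $tn+i=q_ts+r_t$, the walks $(r_ta^+,q_tc^+)$ and $((s-r_t)a^-,(q_t+1)c^+)$ land on $r_t+q_ts=tn+i\equiv i\pmod n$ and on $-(s-r_t)+(q_t+1)s=r_t+q_ts\equiv i$; with $tn-i=\bar{q}_ts+\bar{r}_t$, the walks $(\bar{r}_ta^-,\bar{q}_tc^-)$ and $((s-\bar{r}_t)a^+,(\bar{q}_t+1)c^-)$ land on $-\bar{r}_t-\bar{q}_ts=-(tn-i)\equiv i$ and on $(s-\bar{r}_t)-(\bar{q}_t+1)s=-\bar{r}_t-\bar{q}_ts\equiv i$. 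The range $1\le t\le\frac{s}{gcd(n,s)}$ together with the preceding remarks keeps $q_t,\bar{q}_t$ below the length $\frac{n}{gcd(n,s)}$ of an inner cycle, so the inner blocks do not needlessly loop. The length formulas are then immediate from $\ell=\ell_a+\ell_c$; for instance $\ell^{2,t}(i)=(s-r_t)+(q_t+1)=1+s-r_t+q_t$, and the remaining five are the same one-line computation.

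The one step that requires an argument is pairwise non-equivalence. By definition $\approx$ demands equal outer length, equal inner length, and the same direction. Grouping the six by the pattern (direction of outer block, direction of inner block) gives $P^1(i),P^{1,t}(i)$ in $(+,+)$, $P^2(i),P^{2,t}(i)$ in $(-,+)$, $P^{3,t}(i)$ alone in $(-,-)$, and $P^{4,t}(i)$ alone in $(+,-)$; hence any two from different patterns are non-equivalent, and $P^{3,t}(i),P^{4,t}(i)$ are non-equivalent to all the others. Inside the pattern $(+,+)$, equivalence of $P^1(i)$ and $P^{1,t}(i)$ would force $r=r_t$ and $q=q_t$, so $i=qs+r=q_ts+r_t=tn+i$, i.e. $tn=0$, contradicting $t\ge 1$; the pattern $(-,+)$ is identical with $r,q$ replaced by $s-r,q+1$. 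This covers all pairs.

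The main obstacle lies not in this lemma as literally stated but in the companion fact that $\mathcal{C}$ (together with the analogous families for the other admissible $t$) represents \emph{every} equivalence class of $0$–$i$ paths, which is what the ensuing distance formula needs: after using Lemma \ref{Key} to bring an arbitrary path to canonical form $(\alpha a^\pm,\beta c^\pm)$, one must observe that $\alpha$ can be taken in $\{0,\ldots,s-1\}$ by trading $s$ consecutive outer edges for one inner edge, and then read off from the displacement equation $\pm\alpha\pm\beta s\equiv i\pmod n$ and the residue of $i\pm tn$ modulo $s$ exactly which of the six the path is; keeping the bookkeeping straight over the sign of the winding, and over the degenerate cases $r=0$ or $\bar{r}_t=0$ where a block vanishes, is where the care is needed.
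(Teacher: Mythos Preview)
Your proposal is correct and follows essentially the same construction as the paper: both of you write $i$, $tn+i$, and $tn-i$ in the form $Qs+R$ and read off the outer/inner blocks and their lengths, with you doing the endpoint check by one-line modular arithmetic where the paper spells out each walk vertex by vertex. You in fact go further than the paper by supplying an explicit pairwise non-equivalence argument (grouping by direction pattern, then forcing $tn=0$ within a pattern), which the paper's own proof does not address at all---indeed the remark immediately following the lemma concedes that for some $n,s$ members of $\mathcal{C}$ may coincide with walks equivalent to shorter paths in $\mathcal{C}$.
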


\begin{proof}
Let $i$ be a vertex of $C_n(1,s)$ and $t$ be an integer such that $1\leq t\leq \frac{s}{gcd(n,s)}.$

As $i=q s+r,$ we consider the path $P(i)$ represented as follows 
   	$0 \leadsto^{a^+} 1 \leadsto^{a^+} 2 \leadsto^{a^+} \dots \leadsto^{a^+} r \leadsto^{c^+} r+s \leadsto^{c^+} r+2s \leadsto^{c^+} \dots \leadsto^{c^+} r+q s=i.$ We have $P^1 (i)=(r a^+,q c^+)$  	and, $\ell^1 (i)= r+ q$.

$i$ can  be written as $i=(q+1) s+(r-s).$ We consider, in this case, the path $P(i)$ represented as follows 
   	$0 \leadsto^{a^-} n-1 \leadsto^{a^-} n-2 \leadsto^{a^-} \dots \leadsto^{a^-} n-(s-r) \leadsto^{c^+} n-(s-r)+s \leadsto^{c^+} n-(s-r)+2s \leadsto^{c^+} \dots \leadsto^{c^+} n-(s-r)+(q+1) s=n+i  \ (\equiv i \mod n).$ We have $P^2(i)=((s-r)a^-,(q+1)c^+)$  	and,  $\ell^2 (i)=1+s-r+ q$.

$i$ can also be written as $tn+i=q_{t} s+r_{t}$. In this case, we consider the path $P(i)$ represented as follows  	$0 \leadsto^{a^+} 1 \leadsto^{a^+} 2 \leadsto^{a^+} \dots \leadsto^{a^+} r_{t} \leadsto^{c^+} r_{t}+s \leadsto^{c^+} r_{t}+2s \leadsto^{c^+} \dots \leadsto^{c^+} r_{t}+q_{t} s=tn+i \ (\equiv i \mod n).$   We have $P^{3,t}(i)=(r_{t}a^+,q_{t}c^+)$ and, $\ell^{1,t} (i)=r_{t}+ q_{t}.$

$i$ can be written as $tn+i=(q_{t}+1) s+(r_{t}-s)$. We consider the path $P(i)$ represented as follows  	  
  	$0 \leadsto^{a^-} n-1 \leadsto^{a^-} n-2 \leadsto^{a^-} \dots \leadsto^{a^-} n-(s-r_{t}) \leadsto^{c^+} n-(s-r_{t})+s \leadsto^{c^+} n-(s-r_{t})+2s \leadsto^{c^+} \dots \leadsto^{c^+} n-(s-r_{t})+(q_{t}+1) s=(t+1)n+i  \ (\equiv i \mod n).$ We  have $P^{2,t}(i)=((s-r_{t})a^-,(q_{t}+1)c^+)$	and, $\ell^{2,t} (i)=1+s-r_{t}+ q_{t}.$

$i$ can be written as $tn-i=\bar{q}_{t} s+\bar{r}_{t}$. We consider the path $P(i)$ represented as follows  
   	  $0 \leadsto^{a^-} n-1 \leadsto^{a^-} n-2 \leadsto^{a^-} \dots \leadsto^{a^-} -\bar{r}_{t} \leadsto^{c^-}  -\bar{r}_{t}-s \leadsto^{c^-} -\bar{r}_{t}-2s \leadsto^{c^-} \dots \leadsto^{c^-} -\bar{r}_{t}-\bar{q}_{t} s=-tn+i  \ (\equiv i \mod n).$  We have $P^{3,t}(i)=(\bar{r}_{t}a^-,\bar{q}_{t}c^-)$ 	and, $\ell^{3,t} (i)=\bar{r}_{t}+ \bar{q}_{t}.$

$i$ can be written as  $tn-i=(\bar{q}_{t}+1) s+(\bar{r}_{t}-s)$. We consider the path $P(i)$ represented as follows  		
   	$0 \leadsto^{a^+} 1 \leadsto^{a^+} 2 \leadsto^{a^+} \dots \leadsto^{a^+} -(\bar{r}_{t}-s) \leadsto^{c^-} -(\bar{r}_{t}-s)-s \leadsto^{c^-} -(\bar{r}_{t}-s)-2s \leadsto^{c^-} \dots \leadsto^{c^-} -(\bar{r}_{t}-s)-(\bar{q}_{t}+1) s=-tn+i \ (\equiv i \mod n). $  We have $P^{4,t}(i)=((s-\bar{r}_{t})a^+,(\bar{q}_{t}+1)c^-)$  	and,  $\ell^{4,t} (i)=1+s-\bar{r}_{t}+ \bar{q}_{t}.$
\end{proof}

\begin{remark}
For some values of $n$ and $s$, it is possible that $\mathcal{C}$ contains some \textit{walks}. If this is the case, then these walks will be equivalent to other paths of $\mathcal{C}$ of smaller lengths. 

\end{remark}

\begin{proposition}

Every single path in the class $\mathcal{C}$ constitute an equivalence class.

\end{proposition}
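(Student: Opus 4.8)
The plan is first to pin down what the $\approx$-class of a path $P(i)$ records. By the Definition of $\approx$, that class is determined by the numbers $\ell(P(i)),\ell_a(P(i)),\ell_c(P(i))$ together with the direction, i.e.\ the $\pm$ sign of the outer block and the $\pm$ sign of the inner block; since $\ell(P(i))=\ell_a(P(i))+\ell_c(P(i))$, the length is redundant. So saying that each member of $\mathcal{C}$ ``constitutes an equivalence class'' is, in view of the Remark just before the statement, the assertion that the members of $\mathcal{C}$ which are genuine paths have pairwise distinct \emph{signatures} $\big(\ell_a,\ell_c,\text{sign}_a,\text{sign}_c\big)$, these being read off directly from the six representations in Lemma~\ref{path}. (If, for the chosen $n,s,i,t$, some member of $\mathcal{C}$ is only a walk, that Remark already tells us it is $\approx$-equivalent to a strictly shorter member of $\mathcal{C}$, so it spawns no new class; and Lemma~\ref{Key} is what makes these canonical ``outer block, then inner block'' representations the relevant ones.)

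Next I would compare signatures. The sign pairs $(\text{sign}_a,\text{sign}_c)$ are $(+,+)$ for $P^1(i)$ and $P^{1,t}(i)$, $(-,+)$ for $P^2(i)$ and $P^{2,t}(i)$, $(-,-)$ for $P^{3,t}(i)$, and $(+,-)$ for $P^{4,t}(i)$. Hence $P^{3,t}(i)$ and $P^{4,t}(i)$ are at once non-equivalent to one another and to the other four, and the whole matter collapses to the two comparisons $P^1(i)$ vs.\ $P^{1,t}(i)$ and $P^2(i)$ vs.\ $P^{2,t}(i)$. For these, recall $r\equiv i\pmod s$, $q=\lfloor i/s\rfloor$, $r_t\equiv tn+i\pmod s$, $q_t=\lfloor (tn+i)/s\rfloor$. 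Since $r_t-r\equiv tn\pmod s$, one has $r=r_t$ exactly when $s\mid tn$, in which case $q_t=q+tn/s$, and because $t\ge 1$ and $n>s$ this forces $q_t>q$. Thus either the outer lengths ($r$ vs.\ $r_t$, resp.\ $s-r$ vs.\ $s-r_t$) already differ, or they agree and then the inner lengths ($q$ vs.\ $q_t$, resp.\ $q+1$ vs.\ $q_t+1$) differ. In every case the two signatures differ, so the six members of $\mathcal{C}$ lie in six distinct equivalence classes.

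The step I expect to need actual argument rather than substitution is the treatment of \emph{degenerate} signatures, that is, members of $\mathcal{C}$ whose outer block or inner block is empty (for instance $r=0$, $r_t=0$, $\bar r_t=0$, or $q=0$), since an empty block shortens the signature and one must be sure this cannot make two distinct members coincide. This is again settled by length bookkeeping: whenever two members would have to be compared and exactly one of them has an empty outer (resp.\ inner) block, their outer (resp.\ inner) lengths already differ; moreover $P^2(i),P^{2,t}(i),P^{4,t}(i)$ never have an empty outer block because $s-r,\ s-r_t,\ s-\bar r_t\ge 1$. The only genuinely trivial situation, $i=0$, makes $P^1(i)$ the empty path and is handled separately. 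With these cases dispatched, the Proposition follows by direct inspection of the length formulas of Lemma~\ref{path}.
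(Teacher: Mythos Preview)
You have misread the assertion. The (admittedly awkward) sentence ``every single path in the class $\mathcal{C}$ constitutes an equivalence class'' is, in the paper, the exhaustivity claim that the six members of $\mathcal{C}$ represent \emph{all} $\approx$-classes of paths from $0$ to $i$; pairwise non-equivalence of the members of $\mathcal{C}$ was already announced in Lemma~\ref{path} (``a class of pairwise non-equivalent paths''), so restating it as a separate Proposition would be pointless. Your argument carefully establishes that the six signatures are distinct, which is precisely the distinctness direction --- correct in itself, but not what the Proposition is about.

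What is actually required is the covering argument, and your proposal does not supply it. The paper's route is: start from an arbitrary walk $W$ from $0$ to $i$, invoke Lemma~\ref{Key} to obtain a canonical path $Q(i)=(\alpha a^{\pm},\beta c^{\pm})$, and then show that this $Q(i)$ must be $\approx$-equivalent to one of $P^1(i),P^2(i),P^{1,t}(i),P^{2,t}(i),P^{3,t}(i),P^{4,t}(i)$ for some admissible $t$. Concretely, the sign pattern of $Q(i)$ picks out one of the four families, and then the congruence $\pm\alpha\pm\beta s\equiv i\pmod n$ forces $(\alpha,\beta)$ to match the corresponding $(r,q)$, $(s-r,q+1)$, $(r_t,q_t)$, $(s-r_t,q_t+1)$, $(\bar r_t,\bar q_t)$ or $(s-\bar r_t,\bar q_t+1)$. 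This step --- that every canonical $(\alpha a^{\pm},\beta c^{\pm})$ reaching $i$ arises in Lemma~\ref{path} for some $t$ --- is the content of the Proposition, and it is absent from your proposal.
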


\begin{proof}
Let $i$ be a vertex of $C_n(1,s)$ and $t$ be an integer such that $1\leq t\leq \frac{s}{gcd(n,s)}.$  The set of paths $\mathcal{P}$ contains the following six equivalence classes: 
\begin{itemize}
\item $[P^1 (i)]=\{P(i) \in \mathcal{P} : P^1 (i)\approx P(i) \}$, 
 $[P^2 (i)]=\{P(i) \in \mathcal{P} : P^2 (i)\approx P(i) \}$;
 \item $[P^{1,t}(i)]=\{P(i) \in \mathcal{P} : P^{1,t}(i)\approx P(i) \}$,
 $[P^{2,t}(i)]=\{P(i) \in \mathcal{P} : P^{2,t}(i)\approx P(i) \}$;
 \item $[P^{3,t}(i)]=\{P(i) \in \mathcal{P} : P^{3,t}(i)\approx P(i) \}$,  $[P^{4,t}(i)]=\{P(i) \in \mathcal{P} : P^{4,t}(i)\approx P(i) \}$.
\end{itemize}
  In fact and without loss of generality, if we assume that $W$ is an arbitrary walk  in $C_n(1,s)$ leading from $0$ to $i$, then, by Lemma \ref{Key}, there exists a  path $Q(i)$ walking through all its outer edges \textbf{before} entering to its inner edges or vice versa.  Without loss of generality, assume that $Q(i)=(\alpha a^\pm , \beta c^\pm)$. In this case, there exists  $ P(i) \in \mathcal{C}$ such that $\ell(P(i))=\ell(Q(i)),$ $\ell_a(P(i))=\ell_a(Q(i)),$ $ \ell_c(P(i))=\ell_c(Q(i))$, and $P(i)$ and $Q(i)$ take the same direction,  i.e, there exist  $ P(i) \in \mathcal{C}$ such that $ P(i) \approx Q(i)$.
\end{proof}

\begin{example}
Let us take the graph  $C_{10}(1,4)$ presented in Figure \ref{fig1}. Without loss of generality, if we assume that $W$ is an arbitrary walk  in $C_n(1,s)$ leading from $0$ to $6$ by taking $1 a^{+}, $ $ 2 c^{+}$,  $ 1 a^{-},$   and $ 3 c^{-}. $ The walk  $W$ is represented as follows. $$0 \leadsto^{a^+} 1 \leadsto^{c^+} 5 \leadsto^{c^+} 9 \leadsto^{a^-} 8 \leadsto^{c^-}  4 \leadsto^{c^-} 0 \leadsto^{c^-} 6.$$ By Lemma \ref{Key}, there exists a path $Q(6)$ such that $Q(6)=((1-1)a^+ ,(3-2)c^-)=(0,1c^-).$ We remark that $\ell(Q(6))=1<\ell(W)=7.$ Furthermore, by Lemma \ref{path}, for $t=1$, we have $n-i=10-6=4=s$. Thus, $P^{3,1}(6)=(0,1c^-).$ Hence, $ Q(6) \approx P^{3,1}(6)$. Consequently, $ Q(6) \in [P^{3,1}(6)]$.
\end{example}

Since, by definition, the distance is the shortest path, the next result provides the distance between $0$ and any vertex $i$ in $C_n(1,s)$.

\begin{lemma}
\label{dis1}
Let $t$ be an integer such that $1\leq t\leq \frac{s}{gcd(n,s)}.$ For all $ i\in V(C_n(1,s))$, we have $$d(i) =\min(\ell^1 (i), \ell^2 (i), \ell^{1,t} (i),  \ell^{2,t} (i),  \ell^{3,t} (i),  \ell^{4,t} (i)).$$ 

\end{lemma}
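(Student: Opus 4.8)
The plan is to combine the two structural results established just above: Lemma~\ref{Key} (every walk from $0$ to $i$ can be replaced by a path of no greater length that traverses all its outer edges before all its inner edges, or vice versa) and Lemma~\ref{path} together with the Proposition (the six paths in $\mathcal{C}$ are pairwise non-equivalent and each forms a full equivalence class under $\approx$). Recall that $d(i)$ is by definition the minimum length of a path joining $0$ and $i$, equivalently the minimum length over all walks from $0$ to $i$.

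The first step is to observe that $d(i) \le \min(\ell^1(i), \ell^2(i), \ell^{1,t}(i), \ell^{2,t}(i), \ell^{3,t}(i), \ell^{4,t}(i))$ for every admissible $t$, since each of the six objects in $\mathcal{C}$ is an explicit walk from $0$ to $i$ (as verified termwise in the proof of Lemma~\ref{path}), and the length of any walk is an upper bound for $d(i)$. The second step is the reverse inequality: let $W$ be a shortest path realizing $d(i)$. By Lemma~\ref{Key}, there is a path $Q(i)$ of the form $(\alpha a^\pm, \beta c^\pm)$ with $\ell(Q(i)) \le \ell(W) = d(i)$; since $d(i)$ is minimal, $\ell(Q(i)) = d(i)$. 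Now $Q(i)$ falls into one of the six equivalence classes $[P^1(i)], [P^2(i)], [P^{1,t}(i)], [P^{2,t}(i)], [P^{3,t}(i)], [P^{4,t}(i)]$ (this is exactly the content of the Proposition), and equivalent paths have equal length, so $d(i)$ equals one of $\ell^1(i), \ell^2(i), \ell^{1,t}(i), \ell^{2,t}(i), \ell^{3,t}(i), \ell^{4,t}(i)$. Hence $d(i) \ge \min$ of these six quantities, and combining with the first step gives equality.

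The one point requiring care — and the main obstacle — is the quantifier on $t$. The six classes in $\mathcal{C}$ depend on the choice of $t$ with $1 \le t \le \frac{s}{\gcd(n,s)}$, and the classification in the Proposition is stated "for $t$ such that $1 \le t \le \frac{s}{\gcd(n,s)}$". So one must check that a shortest path, after the Lemma~\ref{Key} reduction to the canonical form $(\alpha a^\pm, \beta c^\pm)$, indeed lands in one of the six classes \emph{for that same $t$} — i.e. that the number $\beta$ of inner edges in a shortest path is constrained to the range forcing the relevant $t$ to be admissible. This is precisely the content of the Remark preceding Lemma~\ref{path}: a shortest path uses each inner cycle at most once around, so $\beta < \frac{n}{\gcd(n,s)}$, which pins $t$ into $1 \le t \le \frac{s}{\gcd(n,s)}$. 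Once this bookkeeping is in place, the statement $d(i) = \min(\ell^1(i), \ell^2(i), \ell^{1,t}(i), \ell^{2,t}(i), \ell^{3,t}(i), \ell^{4,t}(i))$ follows; the argument should also note that if some member of $\mathcal{C}$ happens to be a walk rather than a path (the Remark after Lemma~\ref{path}), it is equivalent to a genuine path in $\mathcal{C}$ of smaller length, so it does no harm to keep it in the minimum.
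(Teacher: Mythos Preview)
Your proposal is correct and follows exactly the line the paper sets up: the paper states Lemma~\ref{dis1} without a separate proof, treating it as the immediate consequence of Lemma~\ref{Key} and the Proposition (that the six paths in $\mathcal{C}$ exhaust the equivalence classes of paths from $0$ to $i$). Your write-up simply makes explicit the two-inequality argument and the bookkeeping on the range of $t$ that the paper leaves implicit.
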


As $diam(C_n(1,s))= \max\{d(i):  \ 0\leq i \leq n\}$, we obtain the following result.

\begin{lemma}\label{dis2}
For all $n$ and $s$, we have
$$diam(C_n(1,s))= \max\{d(i):  \ 2\leq i \leq \lfloor \frac{n}{2} \rfloor\}.$$
\end{lemma}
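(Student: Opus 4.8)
The plan is to reduce the computation of the diameter from a maximum over all $n$ vertices to a maximum over the range $2 \le i \le \lfloor n/2 \rfloor$, by exploiting two symmetries of $C_n(1,s)$. First I would dispose of the small cases: the vertex $i=0$ has $d(0)=0$, and the vertices $i=1$ and $i=n-1$ are adjacent to $0$, so $d(1)=d(n-1)=1$; neither can realize the diameter (which is at least $2$ as soon as $2 \le s \le \lfloor (n-1)/2 \rfloor$, since then $C_n(1,s)$ is not complete). Hence $diam(C_n(1,s)) = \max\{ d(i) : 2 \le i \le n-2\}$.

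Next I would use the reflection automorphism $x \mapsto -x \pmod n$ of $C_n(1,s)$, which fixes $0$ and sends $i$ to $n-i$; since graph automorphisms preserve distances, $d(i) = d(n-i)$ for every $i$. Therefore every vertex $i$ with $\lfloor n/2 \rfloor < i \le n-2$ has the same distance to $0$ as the vertex $n-i$, which lies in the range $2 \le n-i \le \lceil n/2 \rceil - 1 \le \lfloor n/2 \rfloor$. Combining this with the previous paragraph collapses the maximum onto the interval $2 \le i \le \lfloor n/2 \rfloor$, giving $diam(C_n(1,s)) = \max\{ d(i) : 2 \le i \le \lfloor n/2 \rfloor\}$, as claimed. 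One should also invoke Lemma (vertex-transitivity, the second lemma of the excerpt) to justify that $d(i,j)$ depends only on the "offset" and hence that $diam(C_n(1,s)) = \max\{d(i) : 0 \le i \le n-1\}$ in the first place, which is the starting point quoted just before the statement.

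The only genuine subtlety — rather than an obstacle — is bookkeeping at the boundary: checking that when $n$ is odd the reflection pairs $i$ with $n-i$ without leaving a vertex unaccounted for near $n/2$, and that when $n$ is even the "antipodal" vertex $n/2$ is its own mirror image and is already included in the range. These are one-line parity checks. I expect no real difficulty here; the proof is essentially the observation that the distance-from-$0$ function is symmetric under $i \mapsto n-i$ together with the trivial evaluation of $d$ at $0,1,n-1$.
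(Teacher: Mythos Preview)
Your proof is correct and takes a genuinely different route from the paper. The paper establishes $d(i)=d(n-i)$ by an explicit case analysis: invoking Lemma~\ref{dis1}, it shows that for each of the six path-length types $\ell^1,\ell^2,\ell^{1,t},\ldots,\ell^{4,t}$ that might realize $d(n-i)$, there is a matching path type for $i$ of the same length (for instance $\ell^1(n-i)=\ell^{3,1}(i)$, $\ell^{1,t}(n-i)=\ell^{3,t+1}(i)$, etc.), yielding $d(i)\le d(n-i)$, and then appeals to symmetry for the reverse inequality. You bypass this entirely by observing that $x\mapsto -x\pmod n$ is a graph automorphism of $C_n(1,s)$ fixing $0$, so $d(i)=d(n-i)$ follows immediately. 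Your approach is shorter and more conceptual; the paper's approach has the minor advantage of remaining inside its explicit path-enumeration framework and exercising the machinery of Lemma~\ref{dis1}, but at the cost of a six-case verification. One small quibble: your assertion that the diameter is at least $2$ fails for $n=5,\ s=2$ (where $C_5(1,2)\cong K_5$), but the conclusion survives there since $d(2)=1$ lies in the range and equals the diameter, so removing $i\in\{0,1,n-1\}$ does not change the maximum.
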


\begin{proof}
Let $t$ be an integer such that $1\leq t\leq \frac{s}{gcd(n,s)},$ and let $i$ be a vertex of $C_n(1,s)$. We need to prove that  $d(i)=d(n-i)$ for  $1\leq i \leq \lfloor \frac{n}{2} \rfloor$. Since by Lemma \ref{dis1}, $d(i) =\min(\ell^1 (i), \ell^2 (i), \ell^{1,t} (i),  \ell^{2,t} (i),  \ell^{3,t} (i),  \ell^{4,t} (i)),$ we have $d(i) \leq \ell(i)$ where $\ell(i) \in \{ \ell^1 (i), \ell^2 (i), \ell^{1,t} (i),  \ell^{2,t} (i),  \ell^{3,t} (i),  \ell^{4,t} (i) \}$.  First, we prove that  $d(i)\leq d(n-i).$ For that we need to discuss the following cases:

\begin{itemize}
\item If $d(n-i)=\ell^1 (n-i)$, then, we have $\ell^{3,1} (i) = \bar{r}_{1}+ \lfloor \frac{n-i}{s} \rfloor = \ell^1 (n-i)$ and $d(i)\leq \ell^{3,1} (i)$. Thus,  $d(i)\leq d(n-i).$

\item If $d(n-i)=\ell^2 (n-i),$ then, we have $\ell^{4,1} (i) = 1+s-\bar{r}_{1}+ \lfloor \frac{n-i}{s} \rfloor = \ell^2 (n-i)$ and $d(i)\leq \ell^{4,1} (i)$.   Thus, $d(i)\leq d(n-i).$

\item If $d(n-i)=\ell^{1,t} (n-i),$ with $1\leq t \leq \frac{s}{gcd(n,s)}$, then for $t=1,$ we have $\ell^{1,1} (n-i)=r_{1}+ \lfloor \frac{n+(n-i)}{s} \rfloor =  r_{1}+ \lfloor \frac{2n-i}{s} \rfloor=\ell^{3,2} (i)$ and $d(i)\leq \ell^{3,2} (i)$. Thus, $d(i)\leq d(n-i).$  For $t\geq 2,$ we have $\ell^{3,t} (i) = \bar{r}_{t}+ \lfloor \frac{tn-i}{s} \rfloor = \bar{r}_{t}+ \lfloor \frac{(t-1)n+(n-i)}{s} \rfloor =\ell^{1,t-1} (n-i)$ and $d(i)\leq \ell^{3,t} (i)$. Thus, $d(i)\leq d(n-i).$

\item If $d(n-i)=\ell^{2,t} (n-i),$ with $1\leq t \leq \frac{s}{gcd(n,s)}$, then for $t=1,$ $\ell^{2,1} (n-i)=1+s-r_{1}+ \lfloor \frac{n+(n-i)}{s} \rfloor = 1+s- r_{1}+ \lfloor \frac{2n-i}{s} \rfloor=\ell^{4,2} (i)$ and $d(i)\leq \ell^{4,2} (i)$. Thus, $d(i)\leq d(n-i).$  For $t\geq 2,$ $\ell^{4,t} (i) =1+s- \bar{r}_{t}+ \lfloor \frac{tn-i}{s} \rfloor = 1+s-\bar{r}_{t}+ \lfloor \frac{(t-1)n+(n-i)}{s} \rfloor =\ell^{2,t-1} (n-i)$ and $d(i)\leq \ell^{4,t} (i)$. Thus, $d(i)\leq d(n-i).$

\item If $d(n-i)=\ell^{3,t} (n-i),$ with $1\leq t \leq \frac{s}{gcd(n,s)}$, then for $t=1,$ $\ell^{3,1} (n-i)=\bar{r}_{1}+ \lfloor \frac{n-(n-i)}{s} \rfloor =  \bar{r}_{1}+ \lfloor \frac{i}{s} \rfloor=\ell^{1} (i)$ and $d(i)\leq \ell^{1} (i)$. Thus, $d(i)\leq d(n-i).$  For $t\geq 2,$ $\ell^{1,t-1} (i) = r_{t-1}+ \lfloor \frac{(t-1)n+i}{s} \rfloor = r_{t-1}+ \lfloor \frac{tn-(n-i)}{s} \rfloor =\ell^{3,t} (n-i)$ and $d(i)\leq \ell^{1,t-1} (i)$. Thus, $d(i)\leq d(n-i).$

\item If $d(n-i)=\ell^{4,t} (n-i),$ with $1\leq t \leq \frac{s}{gcd(n,s)}$, then for $t=1,$ $\ell^{4,1} (n-i)=1+s-\bar{r}_{1}+ \lfloor \frac{n-(n-i)}{s} \rfloor = 1+s- \bar{r}_{1}+ \lfloor \frac{i}{s} \rfloor=\ell^{2} (i)$ and $d(i)\leq \ell^{2} (i)$. Thus, $d(i)\leq d(n-i).$  For $t\geq 2,$ $\ell^{2,t-1} (i) = 1+s-r_{t-1}+ \lfloor \frac{(t-1)n+i}{s} \rfloor =1+s- r_{t-1}+ \lfloor \frac{tn-(n-i)}{s} \rfloor =\ell^{4,t} (n-i)$ and $d(i)\leq \ell^{2,t-1} (i)$. Thus, $d(i)\leq d(n-i).$ 
\end{itemize}
By the same method, we prove that  $d(i)\geq d(n-i).$ Thus, $diam(C_n(1,s))= \max\{d(i): \ 1\leq i \leq \lfloor \frac{n}{2} \rfloor\}$. However, $d(1)=d(n-1)=1.$ Hence, $diam(C_n(1,s))= \max\{d(i):  \ 2\leq i \leq \lfloor \frac{n}{2} \rfloor\}$.
\end{proof}

Next, we present an algorithm  computing the diameter of $C_n(1,s)$.

\begin{algorithm}[H] \label{MyAlgo}
\SetAlgoLined
\KwResult{Value of $diam(C_n(1,s))$}
\vskip 0.2cm
 \textbf{Step 1.} For given $n$ and $s$, we compute $gcd(n,s)$;
 \vskip 0.2cm
 \textbf{Step 2.} For $i\in \{2, \ldots\lfloor \frac{n}{2} \rfloor\}$ and $1\leq t \leq \frac{s}{gcd(n,s)}$, we compute $$d(i) =\min(\ell^1 (i), \ell^2 (i), \ell^{1,t} (i),  \ell^{2,t} (i),  \ell^{3,t} (i),  \ell^{4,t} (i));$$
 \vskip 0.2cm
 
 \textbf{Step 3.} $diam(C_n(1,s)) = \displaystyle \max_{2\leq i\leq \lfloor \frac{n}{2} \rfloor} d(i)$.
 
 \caption{Diameter of $C_n(1,s)$}
\end{algorithm}

\begin{remark}
Algorithm \ref{MyAlgo} is a simple algorithm that gives exact values for the diameter of $C_n(1,s)$ for all  $n$ and $s.$ 
\end{remark}

 The next section provides formulas giving exact values for the diameter of $C_n(1,s)$, that algorithm \ref{MyAlgo} has established.

\section{Diameter formulas for $diam(C_n(1,s))$}
\label{diam}

 Our method makes it possible to find the vertices of $C_{n}(1,s)$ whose distance with the vertex $0$ coincides with the diameter of the graph, and which allows to have an exact formula for the diameter. Let $n\geq 5$ and $2\leq s\leq \lfloor \frac{n-1}{2} \rfloor.$
For the rest of this section, let $n=\lambda s +\gamma$ where $\gamma$ is an integer such that $0\leq \gamma <s$.
 
%\subsection{Diameter of $C_{n}(1,s)$ when $\gamma = 0$}

\begin{theorem}\cite{chen2005diameter}\label{0}
If $n=\lambda s$,
$$diam(C_{n}(1,s)) = \lfloor \frac{\lambda +s-1}{2}\rfloor.$$
\end{theorem}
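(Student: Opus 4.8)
The plan is to deduce the formula directly from Lemma~\ref{dis1}. Since $n=\lambda s$ we have $\gcd(n,s)=s$, so the range $1\le t\le\frac{s}{\gcd(n,s)}$ collapses to the single value $t=1$, and for every vertex $i$ Lemma~\ref{dis1} reads $d(i)=\min\{\ell^1(i),\ell^2(i),\ell^{1,1}(i),\ell^{2,1}(i),\ell^{3,1}(i),\ell^{4,1}(i)\}$. Writing $i=qs+r$ with $0\le r<s$, I would first specialise the six lengths of Lemma~\ref{path} to the case $\gamma=0$. Using $n+i=(\lambda+q)s+r$ one gets $\ell^{1,1}(i)=\ell^1(i)+\lambda$ and $\ell^{2,1}(i)=\ell^2(i)+\lambda$, so these two candidates are never minimal and may be discarded. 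Using $n-i=(\lambda-q-1)s+(s-r)$ when $r\ge 1$ one gets $\ell^1(i)=r+q$, $\ell^2(i)=1+(s-r)+q$, $\ell^{3,1}(i)=(s-r)+(\lambda-q-1)$ and $\ell^{4,1}(i)=r+(\lambda-q)$; when $r=0$ the analogous (easier) computation gives $d(i)=\min\{q,\lambda-q\}$. Hence for $r\ge 1$, $d(i)=\min\{\ell^1(i),\ell^2(i),\ell^{3,1}(i),\ell^{4,1}(i)\}$.

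For the upper bound I would observe that $\ell^1(i)+\ell^{3,1}(i)$ equals $s+\lambda-1$ when $s\nmid i$ and equals $\lambda$ when $s\mid i$; in either case it is at most $s+\lambda-1$, so $d(i)\le\min\{\ell^1(i),\ell^{3,1}(i)\}\le\lfloor\frac{\lambda+s-1}{2}\rfloor$ for every vertex $i\ne 0$, whence $diam(C_n(1,s))\le\lfloor\frac{\lambda+s-1}{2}\rfloor$. This half is routine.

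For the matching lower bound I would exhibit a single vertex attaining the value $D:=\lfloor\frac{\lambda+s-1}{2}\rfloor$. Put $r^{\ast}=\lfloor\frac{s}{2}\rfloor$, $q^{\ast}=D-r^{\ast}$ and $i^{\ast}=q^{\ast}s+r^{\ast}$. From $s\ge 2$ we get $1\le r^{\ast}\le s-1$, hence $s\nmid i^{\ast}$; and from $\lambda\ge 3$ (which holds because $s\le\lfloor\frac{n-1}{2}\rfloor$ together with $\gamma=0$ forces $\lambda s=n\ge 2s+1$) we get $1\le q^{\ast}\le\frac{\lambda}{2}\le\lambda-1$, so $i^{\ast}$ is a genuine vertex with $0<i^{\ast}<n$. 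Then $\ell^1(i^{\ast})=r^{\ast}+q^{\ast}=D$, while the other three relevant lengths are at least $D$: $\ell^{3,1}(i^{\ast})=(s+\lambda-1)-D=\lceil\frac{\lambda+s-1}{2}\rceil\ge D$; $\ell^2(i^{\ast})-\ell^1(i^{\ast})=1+s-2r^{\ast}\ge 0$ because $r^{\ast}\le\frac{s+1}{2}$; and $\ell^{4,1}(i^{\ast})-\ell^1(i^{\ast})=\lambda-2q^{\ast}\ge 0$ because $q^{\ast}\le\frac{\lambda}{2}$. Therefore $d(i^{\ast})=D$, so $diam(C_n(1,s))\ge D$, and combining with the upper bound gives the claimed equality.

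The crux is the lower-bound construction: one must choose $i^{\ast}$ so that the path $P^1$ (outer edges, then clockwise inner edges) and the path $P^{3,1}$ (outer edges, then counterclockwise inner edges) are simultaneously balanced at $D$, and then check that the two ``opposite'' paths $P^2$ and $P^{4,1}$ do not undercut $D$. The parities of $s$ and $\lambda$ enter only through the floor in $D$, and one should still verify the boundary cases $s=2$ and $\lambda=3$ by hand. Once the displayed identities for the six path-lengths are in place, everything else is immediate substitution into Lemma~\ref{dis1}.
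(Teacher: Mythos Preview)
The paper does not actually prove Theorem~\ref{0}: it is quoted from \cite{chen2005diameter} and left unproved, so there is no ``paper's own proof'' to compare against. Your argument, however, is correct and fits neatly into the framework the paper develops. It follows exactly the template the authors use for the cases $\gamma>0$ (Lemmas~\ref{1.2}, \ref{2.2}, \ref{3.2}, \ref{4.2}): an upper bound obtained from the path-length formulas of Lemma~\ref{path}, and a matching lower bound by exhibiting an explicit vertex whose distance realises the bound via Lemma~\ref{dis1}.

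Two small remarks. First, your observation that $\gcd(n,s)=s$ collapses the $t$-range to $t=1$ is exactly what makes the $\gamma=0$ case so clean; this is worth stating as the key structural simplification. Second, the caveat ``verify $s=2$ and $\lambda=3$ by hand'' is unnecessary: your inequalities $1+s-2r^{\ast}\ge 1$ (since $r^{\ast}=\lfloor s/2\rfloor$) and $\lambda-2q^{\ast}\ge 0$ (from the four parity cases you implicitly handled) already cover every $s\ge 2$ and $\lambda\ge 3$ uniformly, so no separate boundary check is needed.
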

 
 Next, we discuss the case when $\gamma\neq 0$ and $\lambda > \gamma$.

%\subsection{Diameter of $C_{n}(1,s)$ when $\lambda > \gamma > 0$}

\subsection{Diameter of $C_{n}(1,s)$ when $n$ is even, $s$ is odd, and $\lambda > \gamma > 0$}

% In this section, $n$ is even and $s$ is odd. Since $n=\lambda s+\gamma$, the values of $\gamma$ can be represented as $\{2m-1, 2m, s-2m+1, s-2m+2 \}$ where $1\leq m\leq \frac{s-1}{4} +1.$ (for $m=1,$ $\gamma \in \{1,2,s-1\}$). It is obvious that $2m-1 \leq \gamma \leq s-2m+2.$

\begin{lemma}\cite{chen2005diameter}\label{1.1}
Let $n$ be even and $s$ be odd. We have, 
$$diam(C_n(1,s))\leq \lceil \frac{\lambda}{2} \rceil + \frac{s-1}{2} -(\min( \lceil \frac{\gamma}{2} \rceil , \lceil \frac{s-\gamma+1}{2} \rceil) -1).$$
\end{lemma}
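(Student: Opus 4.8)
The plan is to exhibit, for each vertex $i$ with $2 \le i \le \lfloor n/2\rfloor$, a path from $0$ to $i$ drawn from the class $\mathcal{C}$ of Lemma \ref{path} whose length is at most the claimed bound, and then invoke Lemma \ref{dis2} to conclude. Since $n$ is even and $s$ is odd we have $\gcd(n,s)=\gcd(n,s)$, but more usefully $\gcd(s,2)=1$, which will let us move freely between the "$c^+$" and "$c^-$" descriptions of the inner cycles. Write $i = qs + r$ with $0 \le r < s$ as in the Notation preceding Lemma \ref{path}; by Lemma \ref{dis1} it suffices to bound $\min(\ell^1(i),\ell^2(i),\ell^{1,t}(i),\ell^{2,t}(i),\ell^{3,t}(i),\ell^{4,t}(i))$ over admissible $t$. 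The two "base" estimates give $d(i) \le \min(r+q,\; 1+s-r+q)$, and since $i \le \lfloor n/2\rfloor$ we have $q \le \lceil \lambda/2\rceil$ roughly; the quantity $\min(r, 1+s-r)$ is maximized near $r \approx s/2$, where (using $s$ odd) it equals $\tfrac{s+1}{2}$. So the naive bound is about $\lceil\lambda/2\rceil + \tfrac{s-1}{2}$, and the whole point of the lemma is the extra saving of $\min(\lceil\gamma/2\rceil,\lceil(s-\gamma+1)/2\rceil)-1$.

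The key step is to extract that saving. First I would pin down exactly which vertex (or vertices) $i^\ast$ in $\{2,\dots,\lfloor n/2\rfloor\}$ realizes the maximum of the base estimate — this should be $i^\ast$ with $q^\ast = \lceil\lambda/2\rceil - c$ and $r^\ast$ near $s/2$ for a small correction $c$ forced by parity of $n$ and by $\gamma$. Then, for that $i^\ast$, I would use one of the wrap-around paths $P^{1,1}(i^\ast) = (r_1 a^+, q_1 c^+)$ or $P^{3,1}(i^\ast) = (\bar r_1 a^-, \bar q_1 c^-)$, i.e. go once around the cycle ($t=1$): writing $n + i^\ast = q_1 s + r_1$, the number of inner steps $q_1 = \lfloor (n+i^\ast)/s\rfloor$ is larger but the residue $r_1$ differs from $r^\ast$ by $\gamma \bmod s$ (since $n \equiv \gamma \pmod s$), and this shift in residue — combined with switching between $a^+$ and $a^-$ directions — is precisely what trades $\frac{s-1}{2}$ worth of outer steps for roughly $\lceil\gamma/2\rceil$ or $\lceil(s-\gamma+1)/2\rceil$ worth, whichever is cheaper. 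Carrying out the arithmetic with the floor/ceiling functions, tracking the parity conditions coming from "$n$ even, $s$ odd", should yield $d(i^\ast) \le \lceil\lambda/2\rceil + \tfrac{s-1}{2} - (\min(\lceil\gamma/2\rceil,\lceil(s-\gamma+1)/2\rceil)-1)$.

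Finally I would check that every other vertex $i$ also satisfies the bound: for $i$ with $r$ far from $s/2$ the base estimate $\min(r,1+s-r)+q$ is already small enough, and for $i$ with $r$ near $s/2$ but $q$ smaller than $q^\ast$ the base estimate again suffices, so only the handful of "extremal" vertices need the wrap-around argument. The main obstacle I anticipate is bookkeeping: the bound involves nested $\lceil\cdot\rceil$'s and a $\min$, and the residue shift by $\gamma$ interacts differently depending on whether $r^\ast + \gamma$ crosses the value $s$ (i.e. whether the wrap contributes an extra inner step and decrements the outer count) and on the parities of $\gamma$, $\lambda$, and $s-\gamma$. I would organize this by splitting into the cases $\gamma \le \lceil s/2\rceil$ versus $\gamma > \lceil s/2\rceil$ (matching the two arguments of the $\min$) and verifying the inequality separately in each, which is routine once the extremal vertex is correctly identified but requires care with the floor/ceiling identities. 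Note also that this lemma only claims an upper bound; the matching lower bound (hence the exact formula in the table) is presumably established separately, so here I need only produce the paths, not prove optimality.
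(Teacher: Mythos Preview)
The paper does not actually prove this lemma: note the citation \texttt{\textbackslash cite\{chen2005diameter\}} attached to the statement. Lemma~\ref{1.1} is quoted from Chen, Xiao and Parhami (2005) and used as a black box; the paper's own contribution in this subsection is the matching \emph{lower} bound (Lemma~\ref{1.2}), where a specific vertex $i$ is exhibited and all the path lengths $\ell^1(i),\ldots,\ell^{4,t}(i)$ are computed to show $d(i)$ equals the claimed value. So there is no ``paper's own proof'' of Lemma~\ref{1.1} to compare your proposal against.

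That said, your strategy is a perfectly sensible way to reprove the upper bound with the machinery of Section~\ref{Algo}, and it is presumably \emph{not} what Chen et al.\ did (their paper predates the path-class framework here). Your outline is correct in structure: bound $d(i)$ for every $i\le \lfloor n/2\rfloor$ by the base paths $\ell^1,\ell^2$ when $r$ is far from $s/2$ or $q$ is small, and invoke a $t=1$ wrap-around path to gain the extra $\min(\lceil\gamma/2\rceil,\lceil(s-\gamma+1)/2\rceil)-1$ for the handful of vertices near $i^\ast$. One caution: your phrasing briefly conflates the upper-bound task (bound \emph{every} vertex) with identifying an extremal vertex, which is really a lower-bound activity; keep those separate when you write it out. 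The real work, as you anticipate, is the floor/ceiling bookkeeping in the residue shift $r\mapsto r+\gamma\bmod s$, and the case split on $\gamma\le \lceil s/2\rceil$ versus $\gamma>\lceil s/2\rceil$ is the right organizing principle. If you want to see the flavor of the arithmetic, the proof of Lemma~\ref{1.2} in the paper carries out exactly this kind of case analysis (on the partition $R=R_1\cup R_2\cup R_3\cup R_4$ of the possible values of $\gamma$) for the lower bound, and the upper-bound computation would be its mirror image.
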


\begin{lemma}\label{1.2}
Let $n$ be even and $s$ be odd. There exists a vertex $i$ of $C_{n}(1,s)$ such that, 
$$d(i)= \lceil \frac{\lambda}{2} \rceil + \frac{s-1}{2} -(\min( \lceil \frac{\gamma}{2} \rceil , \lceil \frac{s-\gamma+1}{2} \rceil) -1).$$
\end{lemma}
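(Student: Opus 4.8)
The strategy is to complement the upper bound of Lemma~\ref{1.1} by exhibiting one vertex at which it is attained. Set
$V:=\lceil \tfrac{\lambda}{2}\rceil+\tfrac{s-1}{2}-\bigl(\min(\lceil\tfrac{\gamma}{2}\rceil,\lceil\tfrac{s-\gamma+1}{2}\rceil)-1\bigr)$.
Since Lemma~\ref{1.1} already gives $d(i)\le V$ for every vertex $i$, it suffices to produce a vertex $i^{\star}$ with $d(i^{\star})\ge V$; by the exact distance formula of Lemma~\ref{dis1} this amounts to checking that every one of the six lengths $\ell^{1}(i^{\star}),\ell^{2}(i^{\star}),\ell^{1,t}(i^{\star}),\ell^{2,t}(i^{\star}),\ell^{3,t}(i^{\star}),\ell^{4,t}(i^{\star})$ is at least $V$ for all admissible $t$ with $1\le t\le \tfrac{s}{\gcd(n,s)}$.

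The first step is to choose $i^{\star}$. Because $n$ is even and $s$ odd, $\lambda$ and $\gamma$ have the same parity, and I would take $i^{\star}$ close to $\lfloor n/2\rfloor$, with its quotient $q^{\star}:=\lfloor i^{\star}/s\rfloor$ and residue $r^{\star}:=i^{\star}\bmod s$ pinned down by a short case analysis on the common parity of $\lambda,\gamma$ and on whether $\lceil\tfrac{\gamma}{2}\rceil\le\lceil\tfrac{s-\gamma+1}{2}\rceil$. In each branch $r^{\star}$ is one of $\tfrac{\gamma\pm1}{2}$, $\tfrac{s-\gamma\pm1}{2}$, $\tfrac{s\pm1}{2}$, picked so that $\ell^{1}(i^{\star})=q^{\star}+r^{\star}$ already equals $V$ and $r^{\star}\le\tfrac{s+1}{2}$; the latter inequality makes $\ell^{2}(i^{\star})=1+(s-r^{\star})+q^{\star}\ge V$, and in several branches one even gets $\ell^{1}(i^{\star})=\ell^{2}(i^{\star})=\ell^{4,1}(i^{\star})=V$, the shortest path running ``around the other side''. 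The hypotheses $n$ even, $s$ odd and $\lambda>\gamma$ enter exactly here: they keep $r^{\star}$ in $\{0,\dots,s-1\}$ and give the floor $\lfloor(n-i^{\star})/s\rfloor$ and the residue $\bar r_{1}$ the parity required for $\ell^{1}(i^{\star})=V$.

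Then one verifies the six lower bounds. The lengths $\ell^{1}(i^{\star}),\ell^{2}(i^{\star})$ are read off from $q^{\star},r^{\star}$. Substituting $n=\lambda s+\gamma$ and $i^{\star}=q^{\star}s+r^{\star}$ into the definitions of $q_{t},r_{t},\bar q_{t},\bar r_{t}$ and separating the two subcases $r^{\star}+\gamma<s$ and $r^{\star}+\gamma\ge s$ turns each wrap-around length into an explicit expression; in particular $\ell^{1,1}(i^{\star})$ equals $\ell^{1}(i^{\star})+\lambda+\gamma$ or $\ell^{1}(i^{\star})+\lambda+1+\gamma-s$. Stepping $t\mapsto t+1$ changes $\ell^{1,t}(i^{\star})$ and $\ell^{3,t}(i^{\star})$ by $\lambda+\gamma$ or $\lambda+1+\gamma-s$, while $\ell^{2,t}(i^{\star}),\ell^{4,t}(i^{\star})$ are strictly increasing in $t$ (here $\lambda>\gamma$ is used); tracking $q_{t}=\lfloor(tn\pm i^{\star})/s\rfloor$, which grows by at least $\lambda$ per step, together with the location of the residues, keeps $\ell^{1,t}(i^{\star}),\ell^{3,t}(i^{\star})\ge V$ for $t\ge2$ as well, so the binding case is $t=1$. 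The genuinely delicate estimates are the two reverse-direction lengths $\ell^{3,1}(i^{\star})$ and $\ell^{4,1}(i^{\star})$, which sit right at $V$; establishing $\ell^{3,1}(i^{\star}),\ell^{4,1}(i^{\star})\ge V$ is the crux, and it is where the exact value of $r^{\star}$ matters — shifting $r^{\star}$ by $1$ would pull one of them below $V$. Once all six hold, $d(i^{\star})=\min(\cdots)=V$.

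The main obstacle is therefore the simultaneous fit of all six inequalities at one vertex: since $V$ has been matched to the upper bound of Lemma~\ref{1.1} to the unit, $i^{\star}$ must be chosen with its residue modulo $s$ exactly right, and it is this constraint that forces the case split on parities and on the minimand; the large-$t$ wrap-around paths are afterwards dispatched by the bookkeeping on $q_{t}$ and $\bar q_{t}$ above, and only the reverse-direction paths at $t=1$ demand real care.
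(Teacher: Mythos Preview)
Your plan is the paper's plan: exhibit a single vertex at which the bound of Lemma~\ref{1.1} is met, and verify via Lemma~\ref{dis1} that none of the six lengths drops below $V$. So strategically there is no disagreement.

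The gap is that what you have written is an outline, not a proof. The vertex $i^{\star}$ is never actually produced, and the assertions you make about it are not all accurate. Two concrete points:
\begin{itemize}
\item You claim $r^{\star}$ can always be taken from $\{\tfrac{\gamma\pm1}{2},\tfrac{s-\gamma\pm1}{2},\tfrac{s\pm1}{2}\}$ with $r^{\star}\le\tfrac{s+1}{2}$ and $\ell^{1}(i^{\star})=V$. In the paper's argument this fails precisely when $\gamma$ is small (roughly $\gamma\le\tfrac{s+1}{2}$ with $\gamma\ge2$): there the witnessing vertex has residue $r^{\star}=\tfrac{s+1}{2}+\lceil\tfrac{\gamma}{2}\rceil>\tfrac{s+1}{2}$ and it is $\ell^{2}(i^{\star})$, not $\ell^{1}(i^{\star})$, that equals $V$. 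A naive choice with $r^{\star}\le\tfrac{s+1}{2}$ (say $r^{\star}=\tfrac{s-\gamma}{2}$) is killed by $\ell^{4,1}$ dropping below $V$, exactly the ``genuinely delicate'' estimate you flag but do not carry out.
\item The case split you sketch (``common parity of $\lambda,\gamma$'' and ``which term realises the min'') is coarser than what is needed. The paper partitions the range of $\gamma$ into four sets $R_1,\ldots,R_4$ and then further splits off boundary values of $\gamma$ and of $s$ (e.g.\ $\gamma=1$, $s=3$, $s=5$, $\gamma=\tfrac{s+3}{2}$), with a different explicit $i^{\star}$ in each subcase. The verifications of $\ell^{k,t}(i^{\star})\ge V$ for $t\ge1$ are done separately in each subcase; they are elementary but they are not uniform, and the ``bookkeeping on $q_t,\bar q_t$'' you allude to does not reduce to a single monotonicity argument.
\end{itemize}
In short: right idea, but you must actually name $i^{\star}$ in each branch and do the six checks. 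The paper's proof shows that this requires roughly eight subcases, each with its own vertex, rather than the single template you describe.
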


\begin{proof}
Let $d=\lceil \frac{\lambda}{2} \rceil + \frac{s-1}{2} -(\min( \lceil \frac{\gamma}{2} \rceil , \lceil \frac{s-\gamma+1}{2} \rceil) -1)$. Let $R=\{1,2, \ldots, s-1\}$ be the set  of all the possible values of $\gamma$. This set can be partitioned into: $R_1=\{2m-1: 1\leq m \leq \lfloor \frac{s+3}{4} \rfloor\}$, $R_2=\{2m: 1\leq m \leq \lfloor \frac{s+3}{4} \rfloor\}$, $R_3=\{s-2m+2: 2\leq m \leq \lceil \frac{s-1}{4} \rceil\}$, and $R_4=\{s-2m+1: 1\leq m \leq \lceil \frac{s-5}{4} \rceil\}$. Note that $R_3 = R_4= \varnothing $ when $s\leq 5$. Moreover, $R=R_1\cup R_2 \cup R_3 \cup R_4.$ In fact, when $\gamma$ is odd (i.e., $\gamma\in \{R_1, R_3\}$), we have $R_1=\{1,3,5,\ldots, 2\lfloor \frac{s+3}{4} \rfloor -1\}$ and $R_3=\{s-2\lceil \frac{s-1}{4} \rceil +2, \ldots, s-2\}$. It is easy to verify that, when $s$ is odd, we obtain $2\lfloor \frac{s+3}{4} \rfloor -1+2=s-2\lceil \frac{s-1}{4} \rceil +2$. Similarly, when $\gamma$ is even, we have $2\lfloor \frac{s+3}{4} \rfloor +2=s-2\lceil \frac{s-5}{4} \rceil +1$.

\vskip 0.2cm
\quad\textbf{Case 1.} $\gamma \in R_1$ \\Since $\gamma$ is odd, we have $\min( \lceil \frac{\gamma}{2} \rceil , \lceil \frac{s-\gamma+1}{2} \rceil)=\min(  \frac{\gamma+1}{2} ,  \frac{s-\gamma+2}{2})= \begin{cases} 
\frac{\gamma+1}{2} & \mbox{ if } \ \gamma  \leq  \frac{s+1}{2},\\
\frac{s-\gamma+2}{2} & \mbox{ otherwise. } 
\end{cases}$ \\However, $\gamma=2m-1,$ where $1\leq m \leq \lfloor \frac{s+3}{4} \rfloor$. Thus, if $s\equiv 1 \pmod 4$, then  $\gamma \leq \frac{s+1}{2}$.  Otherwise, if $s\equiv 3 \pmod 4$, then  $\gamma \leq \frac{s-1}{2}$.  Hence, $\min(  \frac{\gamma +1}{2} ,  \frac{s-\gamma+2}{2})= \frac{\gamma +1}{2}.$ Moreover,  $\lambda$ is odd  and $\lambda>\gamma \geq 1$. Thus, $\lambda\geq 3.$  Therefore, when $\gamma\in R_1$, $d= \frac{\lambda +1}{2}  + \frac{s-1}{2} -\frac{\gamma +1}{2}+1.$ 

\vskip 0.2cm
\qquad\quad\textbf{Case 1.1.} $\gamma=1$ \\In this case, $d= \frac{\lambda +s}{2}.$ Let $i=\frac{\lambda-1}{2} s+\frac{s+1}{2}$ be a vertex of $C_n(1,s).$  We have, 

\begin{itemize}
\item $\ell^1(i)=\ell^2(i)=\frac{\lambda-1}{2} + \frac{s+1}{2}= d;$ 
\end{itemize} Let $1\leq t \leq \frac{s}{gcd(n,s)}$ be an integer.  We have $tn+i=(t\lambda + \frac{\lambda-1}{2} )s +t+\frac{s+1}{2}.$ If $t+\frac{s+1}{2} < s$ then,
\begin{itemize}
\item $\ell^{1,t}(i)=t(\lambda + 1)+\frac{\lambda-1}{2} + \frac{s+1}{2} > d$;
\item $\ell^{2,t}(i)=t(\lambda - 1)+\frac{\lambda-1}{2} + \frac{s+1}{2} > d$;
\end{itemize} If $t+\frac{s+1}{2} \geq s,$ i.e., $t \geq \frac{s-1}{2}$, then $tn+i=(t\lambda + \frac{\lambda-1}{2}+\lambda')s +\gamma'$ where $\lambda'\geq 1$, $\gamma' \geq 0$ and $t+\frac{s+1}{2}=\lambda's+\gamma'$.
\begin{itemize}
\item $\ell^{1,t}(i)=t\lambda + \frac{\lambda-1}{2} + \lambda' + \gamma' \geq t\lambda+ \frac{\lambda+1}{2}  > t+ \frac{\lambda+1}{2}.$ So, $\ell^{1,t}(i) > \frac{s-1}{2}+ \frac{\lambda+1}{2}.$ Thus, $\ell^{1,t}(i) > d;$ 
\item $\ell^{2,t}(i) =1+s-\gamma' + t\lambda + \frac{\lambda-1}{2}+\lambda' \geq t\lambda + \frac{\lambda+1}{2}+1.$ Thus,   $\ell^{2,t}(i)> d;$
\end{itemize} We have $tn-i=((t-1)\lambda + \frac{\lambda+1}{2} )s +t-\frac{s+1}{2}.$ 
 If $t-\frac{s+1}{2} < 0,$ then $tn-i=((t-1)\lambda + \frac{\lambda-1}{2})s +t+\frac{s-1}{2},$ 
\begin{itemize}
\item $\ell^{3,t}(i)= (t-1)(\lambda+1)+ \frac{\lambda+1}{2} +\frac{s-1}{2} \geq \frac{\lambda+1}{2} +\frac{s-1}{2} \geq d$;
\end{itemize}
\begin{itemize} 
\item $\ell^{4,t}(i)=(t-1)(\lambda-1)+ \frac{\lambda-1}{2} +\frac{s+1}{2} \geq  \frac{\lambda+1}{2} +\frac{s-1}{2}\geq d$;
\end{itemize} If $0\leq t-\frac{s+1}{2} <s$, i.e., $t\geq \frac{s+1}{2}$, then $tn-i=((t-1)\lambda + \frac{\lambda+1}{2} )s +t-\frac{s+1}{2}.$
\begin{itemize}
\item $\ell^{3,t}(i)=(t-1)\lambda + \frac{\lambda+1}{2} +t-\frac{s+1}{2}\geq (t-1)\lambda + \frac{\lambda+1}{2} >t + \frac{\lambda-1}{2}.$ So,  $\ell^{3,t}(i) >  d.$
\end{itemize}
\begin{itemize} 
\item $\ell^{4,t}(i)=1+s-(t-\frac{s+1}{2})+(t-1)\lambda + \frac{\lambda+1}{2}\geq (t-1)\lambda + \frac{\lambda+1}{2}+1.$ Thus,  $\ell^{4,t}(i) > d.$
\end{itemize} Thus, by Lemma \ref{dis1}, we have $d(i)  =\ell^1 (i)=\ell^2 (i)=d$.

\vskip 0.2cm
\qquad\quad\textbf{Case 1.2.} $\gamma\geq 3$

If $s=5$ then, we have $\gamma=3,$ $\lambda\geq 5$, and $d= \frac{\lambda +1}{2}  + 1.$ Let $i=(\frac{\lambda+1}{2}+1)s$ be a vertex of $C_n(1,s)$.  By a simple calculation of the lengths $\ell^1(i)$ to $\ell^{4,t}(i)$ represented in Lemma \ref{path}, and by applying Lemma \ref{dis1}, we obtain $d(i)  =\ell^1 (i)=d$.

If  $s>5$ then, $\gamma\geq 3$, $\lambda \geq 5$, and $d= \frac{\lambda +1}{2}  + \frac{s+1}{2} -\frac{\gamma +1}{2}.$  Let  $i=\frac{\lambda +1}{2}s+ \frac{\gamma +1}{2} + \frac{s+1}{2}$ be a vertex of $C_n(1,s)$.  Since $s>5$ and $\gamma \leq \frac{s+1}{2}$, we obtain $\frac{\gamma +1}{2} + \frac{s+1}{2}<s$. Note that $\frac{\gamma-1}{2}-1\geq -\frac{\gamma+1}{2}+2$ because $\gamma\geq 3$.  By a simple calculation of the lengths $\ell^1(i)$ to $\ell^{4,t}(i)$ represented in Lemma \ref{path}, and by applying Lemma \ref{dis1}, we obtain $d(i)  =\ell^2 (i)=d$.

\vskip 0.2cm
\quad\textbf{Case 2.} $\gamma \in R_2$ \\ Since $\gamma$ is even, we have $\min( \lceil \frac{\gamma}{2} \rceil , \lceil \frac{s-\gamma+1}{2} \rceil)=\min(  \frac{\gamma}{2} ,  \frac{s-\gamma+1}{2})= \begin{cases} 
\frac{\gamma}{2} & \mbox{ if } \ \gamma  \leq  \frac{s+1}{2},\\
\frac{s-\gamma+1}{2} & \mbox{ otherwise. }
\end{cases}$  \\Moreover, $\gamma =2m$ where $1\leq m \leq \lfloor \frac{s+3}{4} \rfloor$. Thus, for $s$ odd, we obtain  $\gamma \leq \frac{s+3}{2}$.  Hence, we have $\gamma\geq 2$, $\lambda\geq 4$, $s\geq 3$, and $d= \begin{cases} 
\frac{\lambda}{2}  + \frac{s+1}{2} -\frac{s-\gamma+1}{2} & \mbox{ if } \ \gamma  =  \frac{s+3}{2},\\
\frac{\lambda}{2}  + \frac{s+1}{2}-\frac{\gamma}{2} & \mbox{ if } \ \gamma  \leq  \frac{s+1}{2}.
\end{cases}$

\qquad\quad\textbf{Case 2.1.} $\gamma=\frac{s+3}{2}$ \\ In this case, $d=\frac{\lambda}{2}+\frac{\gamma}{2}$. Let $i=\frac{\lambda}{2}s+\frac{\gamma}{2}$ be a vertex of $C_n(1,s)$. Note that since $s>\gamma$, we obtain $s-\frac{\gamma}{2}>\frac{\gamma}{2} $. By a simple calculation of the lengths $\ell^1(i)$ to $\ell^{4,t}(i)$ represented in Lemma \ref{path}, and by applying Lemma \ref{dis1}, we obtain $d(i)  =\ell^1 (i)=d$.

\vskip 0.2cm
\qquad\quad\textbf{Case 2.2.} $\gamma\leq \frac{s+1}{2}$

If $s=3$ then,  $\gamma=2$, $\lambda \geq 4$, and $d=\frac{\lambda}{2}+1.$ Let $i=(\frac{\lambda}{2}+1)s$ be a vertex of $C_n(1,s)$.  By a simple calculation of the lengths $\ell^1(i)$ to $\ell^{4,t}(i)$ represented in Lemma \ref{path}, and by applying Lemma \ref{dis1}, we obtain $d(i)  =\ell^1 (i)=d$.

If $s>3$ then, $\gamma\geq 2$, $\lambda\geq 4$, and $d= \frac{\lambda}{2}  + \frac{s+1}{2} -\frac{\gamma}{2}$. Let $i=\frac{\lambda}{2}s+ \frac{s+1}{2}+\frac{\gamma}{2}$ be a vertex of $C_n(1,s)$. Since $s>3$ and $\gamma\leq \frac{s+1}{2}$, we have $\frac{s+1}{2}+\frac{\gamma}{2}<s.$ Note that $\frac{\gamma}{2}-1 \geq -\frac{\gamma}{2}+1$ because $\gamma\geq 2$.  By a simple calculation of the lengths $\ell^1(i)$ to $\ell^{4,t}(i)$ represented in Lemma \ref{path}, and by applying Lemma \ref{dis1}, we obtain $d(i)  =\ell^2 (i)=d$.

\vskip 0.2cm 
\quad\textbf{Case 3.} $\gamma \in R_3$  \\  In this case, $\gamma=s-2m+2$ where $2\leq m \leq \lceil \frac{s-1}{4} \rceil$. If $s\equiv 1 \pmod 4$, then $\gamma\geq \frac{s+5}{2}.$ Otherwise, if  $s\equiv 3 \pmod 4$, then $\gamma\geq \frac{s+3}{2}.$  Since $\gamma$ is odd and $\gamma \geq \frac{s+1}{2}$, we get $\min( \lceil \frac{\gamma}{2} \rceil , \lceil \frac{s-\gamma+1}{2} \rceil)=\min(  \frac{\gamma+1}{2} ,  \frac{s-\gamma+2}{2} )=\frac{s-\gamma+2}{2}.$ Therefore, $\gamma\geq 3$, $\lambda \geq 5$, $s\geq 7$ (when $s\leq 5, R_3=\emptyset$), and $d=\frac{\lambda +1}{2}  + \frac{s+1}{2} -\frac{s-\gamma+2}{2}.$ Let $i=(\frac{\lambda +1}{2}- \frac{s-\gamma+2}{2})s + \frac{s+1}{2}$ be a vertex $C_n(1,s)$.  By a simple calculation of the lengths $\ell^1(i)$ to $\ell^{4,t}(i)$ represented in Lemma \ref{path}, and by applying Lemma \ref{dis1}, we obtain $d(i)  =\ell^1 (i)=\ell^2 (i)=d$.

\vskip 0.2cm
\quad\textbf{Case 4.} $\gamma \in R_4$  \\ Here we have $\gamma = s-2m+1$ where $1\leq m \leq \lceil \frac{s-5}{4} \rceil$. If $s\equiv 1 \pmod 4$, then $\gamma\geq \frac{s+7}{2}.$ Otherwise, if  $s\equiv 3 \pmod 4$, then $\gamma\geq \frac{s+5}{2}.$ Moreover, since $\gamma$ is even and $\gamma \geq \frac{s+1}{2}$, we have $\min( \lceil \frac{\gamma}{2} \rceil , \lceil \frac{s-\gamma+1}{2} \rceil)=\min(  \frac{\gamma}{2} ,  \frac{s-\gamma+1}{2} )=\frac{s-\gamma+1}{2}.$ Therefore, $\gamma\geq 4$, $\lambda\geq 6$, $s\geq 7$ (when $s\leq 5, R_3=\emptyset$), and   $d=\frac{\lambda }{2}  + \frac{s+1}{2} -\frac{s-\gamma+1}{2}.$ Let $i=(\frac{\lambda }{2}- \frac{s-\gamma+1}{2}+1)s + \frac{s-1}{2}$ be a vertex $C_n(1,s)$.   By a simple calculation of the lengths $\ell^1(i)$ to $\ell^{4,t}(i)$ represented in Lemma \ref{path}, and by applying Lemma \ref{dis1}, we obtain $d(i)  =\ell^1 (i)=d$. 

Consequently,   for all $\gamma\in R$, there exists a vertex $i$ of $C_{n}(1,s)$ such that, 
$d(i)= \lceil \frac{\lambda}{2} \rceil + \frac{s-1}{2} -(\min( \lceil \frac{\gamma}{2} \rceil , \lceil \frac{s-\gamma+1}{2} \rceil) -1).$
\end{proof}

Thus, the following theorem follows from Lemmas \ref{1.1} and \ref{1.2}.

\begin{theorem}
Let $n$ be even and $s$ be odd. We have, 
$$diam(C_{n}(1,s)) = \lceil \frac{\lambda}{2} \rceil + \frac{s-1}{2} -(\min( \lceil \frac{\gamma}{2} \rceil , \lceil \frac{s-\gamma+1}{2} \rceil) -1).$$
\end{theorem}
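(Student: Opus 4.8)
The statement is obtained by combining the upper and lower bounds already established, so the plan is simply to sandwich. We are in the regime $\lambda > \gamma > 0$ (the case $\gamma = 0$ being Theorem~\ref{0}). Lemma~\ref{1.1} gives
$$diam(C_n(1,s)) \leq \left\lceil \frac{\lambda}{2} \right\rceil + \frac{s-1}{2} - \left(\min\left(\left\lceil \frac{\gamma}{2}\right\rceil, \left\lceil \frac{s-\gamma+1}{2}\right\rceil\right) - 1\right) =: d,$$
so only the reverse inequality $diam(C_n(1,s)) \geq d$ remains.

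For the reverse inequality I would invoke Lemma~\ref{1.2}, which produces a vertex $i$ of $C_n(1,s)$ with $d(i) = d$. Since the diameter is by definition the maximum distance over all pairs of vertices (equivalently, by Lemma~\ref{dis2}, $diam(C_n(1,s)) = \max\{d(j) : 2 \leq j \leq \lfloor n/2 \rfloor\}$), the mere existence of one vertex lying at distance exactly $d$ from $0$ already forces $diam(C_n(1,s)) \geq d$. Putting the two inequalities together yields $diam(C_n(1,s)) = d$, which is precisely the claimed formula.

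The genuine content is all packed into Lemma~\ref{1.2}, not into this final assembly. There one must, for each of the four subclasses $R_1, R_2, R_3, R_4$ into which the set $\{1,\dots,s-1\}$ of admissible values of $\gamma$ is partitioned, write down an explicit witness vertex $i$ --- typically of the shape $i = \alpha s + \beta$ with $\alpha, \beta$ tuned to the subclass --- and then verify, via the six length formulas $\ell^1(i), \ell^2(i), \ell^{1,t}(i), \ell^{2,t}(i), \ell^{3,t}(i), \ell^{4,t}(i)$ of Lemma~\ref{path} and the distance formula of Lemma~\ref{dis1}, that the minimum of these six quantities equals $d$. The two points requiring care are: (i) confirming the chosen representations are bona fide paths, i.e.\ the relevant remainders stay within $[0,s)$ (conditions such as $\frac{s+1}{2} + \frac{\gamma}{2} < s$); and (ii) ruling out that any of the other five route lengths drops below $d$, which is where the choice of witness and the hypotheses $n$ even, $s$ odd, $\lambda > \gamma$ get used. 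Once Lemma~\ref{1.2} is secured, the theorem follows in one line.
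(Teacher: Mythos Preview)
Your proposal is correct and matches the paper's approach exactly: the paper likewise derives the theorem in one line from Lemmas~\ref{1.1} and~\ref{1.2}, with the substantive work (the $R_1,\dots,R_4$ case split and the explicit witness vertices checked against the six length formulas) carried out inside Lemma~\ref{1.2}.
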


\subsection{Diameter of $C_{n}(1,s)$ when $n$ is even, $s$ is even, and $\lambda > \gamma > 0$}

\begin{lemma}\cite{chen2005diameter}
\label{2.1}
Let $n$ and $s$ be  even.   We have,

$$diam(C_{n}(1,s))\leq\begin{cases} 
 \lceil \frac{\lambda}{2} \rceil + \frac{s-\gamma}{2} & \mbox{ if } \ \gamma  \leq 2\lceil \frac{s-2}{4} \rceil, \\
\lfloor \frac{\lambda}{2} \rfloor +\frac{\gamma}{2}& \mbox{ otherwise. } 
 \end{cases}$$
\end{lemma}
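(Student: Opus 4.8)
The plan is to obtain the bound vertex by vertex from the path family of Lemma~\ref{path}, taking $t=1$ throughout. By Lemma~\ref{dis2} it suffices to bound $d(i)$ for $2\le i\le\lfloor n/2\rfloor$. Fix such an $i$ and write $i=qs+r$ with $0\le r<s$; from $i\le n/2$ one checks that $0\le q\le\lfloor\lambda/2\rfloor$, in particular $q\le\lambda-1$. Since $1\le 1\le s/gcd(n,s)$, Lemma~\ref{dis1} gives
\begin{align*}
d(i)\ &\le\ \min\bigl(\ell^1(i),\ \ell^2(i),\ \ell^{3,1}(i),\ \ell^{4,1}(i)\bigr)\\
&=\ \min\bigl(q+r,\ q+1+s-r,\ \bar{r}_1+\bar{q}_1,\ 1+s-\bar{r}_1+\bar{q}_1\bigr),
\end{align*}
where $n-i=\bar{q}_1 s+\bar{r}_1$ with $0\le\bar{r}_1<s$. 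The key observation is that, once $\bar{q}_1$ and $\bar{r}_1$ are computed explicitly, two of these four lengths always add up to a quantity that does not depend on $i$, so their minimum is at most half of it.

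I would split on whether $r\le\gamma$ or $r>\gamma$. If $r\le\gamma$, then $n-i=(\lambda-q)s+(\gamma-r)$ with $0\le\gamma-r<s$, so $\bar{q}_1=\lambda-q$ and $\bar{r}_1=\gamma-r$, whence
$$\ell^1(i)+\ell^{3,1}(i)=(q+r)+\bigl((\lambda-q)+(\gamma-r)\bigr)=\lambda+\gamma,$$
and since $\gamma$ is even this gives $d(i)\le\lfloor(\lambda+\gamma)/2\rfloor=\lfloor\lambda/2\rfloor+\frac{\gamma}{2}$. If $r>\gamma$, then $n-i=(\lambda-q-1)s+(s+\gamma-r)$ with $0\le s+\gamma-r<s$, so $\bar{q}_1=\lambda-q-1$ and $\bar{r}_1=s+\gamma-r$, whence
$$\ell^2(i)+\ell^{4,1}(i)=(q+1+s-r)+\bigl((\lambda-q)+(r-\gamma)\bigr)=\lambda+s-\gamma+1,$$
and since $s$ and $\gamma$ are even this gives $d(i)\le\lfloor(\lambda+s-\gamma+1)/2\rfloor=\lceil\lambda/2\rceil+\frac{s-\gamma}{2}$. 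Along the way one records that the paths invoked are genuine, i.e., that the relevant numbers of inner edges lie in the admissible range singled out before Lemma~\ref{path}; this is automatic from $0\le q\le\lfloor\lambda/2\rfloor$ together with $\lambda<n/s\le n/gcd(n,s)$.

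Combining the two cases, $d(i)\le\max\bigl(\lfloor\lambda/2\rfloor+\frac{\gamma}{2},\ \lceil\lambda/2\rceil+\frac{s-\gamma}{2}\bigr)$ for every $i$, and hence $diam(C_n(1,s))$ obeys the same bound. It then remains to check that this maximum is at most the two-branch expression in the statement, which is a short parity computation: writing $\varepsilon=\lambda\bmod 2$, the second quantity exceeds the first by exactly $\varepsilon+\frac{s-2\gamma}{2}$, and since $\gamma$ is even while $2\lceil\frac{s-2}{4}\rceil$ is precisely the largest even integer not exceeding $s/2$, the hypothesis $\gamma\le 2\lceil\frac{s-2}{4}\rceil$ forces $\gamma\le s/2$ (so the second quantity is the larger of the two), whereas its negation forces $\gamma\ge s/2+\varepsilon$ (so the first quantity is the larger) — the subcase $s\equiv 2\pmod 4$, where $s/2$ is odd and hence $\gamma\ne s/2$, being exactly why the threshold is written as $2\lceil\frac{s-2}{4}\rceil$ rather than $\lfloor s/2\rfloor$. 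In each regime the larger of the two quantities is precisely the value tabulated in the statement, which completes the argument. I expect this final parity bookkeeping — reconciling the $\lambda$-parity slack $\varepsilon$ against the $gcd$-free threshold — to be the one genuinely delicate point; everything else is just the clean remark that reaching $i$ the ``short'' and the ``long'' way around one of the two natural closed walks through $0$ in $C_n(1,s)$ (those of lengths $\lambda+\gamma$ and $(\lambda+1)+(s-\gamma)$) already pins $d(i)$ below half the length of that walk.
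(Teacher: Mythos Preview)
Your argument is correct. Note, however, that the paper does not supply its own proof of Lemma~\ref{2.1}: the result is imported verbatim from \cite{chen2005diameter}, so there is no in-paper argument to compare against.

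That said, your proof is a genuine addition, since it derives the cited bound entirely from the paper's own machinery (Lemmas~\ref{path}, \ref{dis1}, \ref{dis2}) rather than appealing to the external reference. The pairing observation --- that $\ell^1(i)+\ell^{3,1}(i)=\lambda+\gamma$ when $r\le\gamma$ and $\ell^2(i)+\ell^{4,1}(i)=\lambda+s-\gamma+1$ when $r>\gamma$ --- is clean and makes the bound transparent: each vertex lies on one of two closed walks through $0$, and the shorter of the two arcs to it has length at most half the walk. The final parity reconciliation is also correct; your identification of $2\lceil\tfrac{s-2}{4}\rceil$ as the largest even integer not exceeding $s/2$, together with the evenness of $\gamma$, is exactly what is needed to see that $\max\bigl(\lfloor\lambda/2\rfloor+\tfrac{\gamma}{2},\ \lceil\lambda/2\rceil+\tfrac{s-\gamma}{2}\bigr)$ coincides with the two-branch expression in the statement. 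One minor remark: the aside about the invoked paths being ``genuine'' is not actually needed here, since for an \emph{upper} bound on $d(i)$ it suffices that each $\ell^{*}(i)$ be the length of some walk from $0$ to $i$, which is immediate from the construction in Lemma~\ref{path}.
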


\begin{lemma}
\label{2.2}
Let $n$ and $s$ be  even. There exists a vertex $i$ of $C_{n}(1,s)$ such that, 
$$d(i)= \begin{cases} 
 \lceil \frac{\lambda}{2} \rceil + \frac{s-\gamma}{2} & \mbox{ if } \ \gamma  \leq 2\lceil \frac{s-2}{4} \rceil, \\
\lfloor \frac{\lambda}{2} \rfloor +\frac{\gamma}{2}& \mbox{ otherwise. } 
 \end{cases}$$
\end{lemma}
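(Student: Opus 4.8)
The plan is to follow the same template as the proof of Lemma~\ref{1.2}: for every admissible $\gamma$ we exhibit one explicit vertex $i$, written as $i=qs+r$ with a carefully chosen quotient and remainder, compute all six path lengths $\ell^1(i),\ell^2(i),\ell^{1,t}(i),\ell^{2,t}(i),\ell^{3,t}(i),\ell^{4,t}(i)$ of Lemma~\ref{path} for every valid $t$, and check that one of $\ell^1(i)$ or $\ell^2(i)$ equals the target value $d$ while all the others are $\ge d$. By Lemma~\ref{dis1} this gives $d(i)=d$, and, together with the upper bound of Lemma~\ref{2.1} and the reduction of Lemma~\ref{dis2}, it pins the diameter down exactly. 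Note that since $n$ and $s$ are both even, $\gamma=n-\lambda s$ is even as well, so the partition of the possible values of $\gamma$ is over $\{2,4,\dots,s-2\}$ only; the relevant arithmetic is then governed by $s\bmod 4$, by the parity of $\lambda$, and by the position of $\gamma$ relative to the threshold $2\lceil\tfrac{s-2}{4}\rceil$ (which is $s/2$ when $4\mid s$ and $s/2-1$ when $s\equiv 2\pmod 4$).

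Write $d$ for the piecewise quantity in the statement. I would split first on the threshold. In the ``small'' regime $\gamma\le 2\lceil\tfrac{s-2}{4}\rceil$, where $d=\lceil\tfrac{\lambda}{2}\rceil+\tfrac{s-\gamma}{2}$, the natural witness is a vertex of the form $i=(\lfloor\tfrac{\lambda}{2}\rfloor-1)s+\tfrac{s+\gamma}{2}$ (with the obvious ceiling/floor adjustments when $\lambda$ is odd), chosen so that $\ell^2(i)=1+s-r+q=d$; one checks directly that this $i$ satisfies $2\le i\le\lfloor n/2\rfloor$ and that it represents a genuine path, not a walk. In the ``large'' regime $\gamma>2\lceil\tfrac{s-2}{4}\rceil$, where $d=\lfloor\tfrac{\lambda}{2}\rfloor+\tfrac{\gamma}{2}$, the witness is instead $i=\lfloor\tfrac{\lambda}{2}\rfloor s+\tfrac{\gamma}{2}$ (which, when $\lambda$ is even, is exactly the antipode $\lfloor n/2\rfloor$), for which $\ell^1(i)=r+q=d$. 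The degenerate small values $s=2$ and $s=4$, where one branch of the formula is vacuous or the two branches coincide, and the boundary values of $\gamma$ adjacent to $2\lceil\tfrac{s-2}{4}\rceil$, should be verified by hand, exactly as Cases~1.1, 2.1 and the $s=3,5$ subcases were handled separately in Lemma~\ref{1.2}.

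The routine but delicate part is showing $\ell^{j,t}(i)\ge d$ for all $t$ with $1\le t\le\tfrac{s}{\gcd(n,s)}$. Since $\gcd(n,s)\ge 2$ here, $t$ ranges only up to $s/2$; for each $t$ one expands $tn\pm i=q_t s+r_t$ (respectively $\bar q_t s+\bar r_t$), observes that the quotient grows roughly like $t\lambda$ while the remainder stays in $[0,s)$, and bounds $\ell^{j,t}(i)$ below by $t\lambda$ plus a small constant, which exceeds $d$ because $\lambda>\gamma$. The only borderline sub-cases are the smallest $t$ for which the ``wrap-around'' $r+t\ge s$ first occurs, and these must be inspected individually, again just as in Cases~1.1 and 2.2 of Lemma~\ref{1.2}.

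I expect the main obstacle to be precisely this wrap-around bookkeeping combined with choosing, in each of the finitely many $\gamma$-classes, the vertex that actually attains the diameter: the target $d$ is not monotone in $\gamma$ and changes form at $\gamma=2\lceil\tfrac{s-2}{4}\rceil$, so the witnessing vertex changes there too, and one must make sure the two families of witnesses are each valid on their own side of the threshold and agree at the boundary. Once the correct $i$ is written down, every inequality $\ell^{j,t}(i)\ge d$ reduces to an elementary estimate in $\lambda,s,\gamma,t$, so no further conceptual difficulty remains beyond organizing the case analysis.
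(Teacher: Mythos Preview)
Your plan is essentially the paper's own proof: split on the threshold $\gamma\lessgtr 2\lceil\tfrac{s-2}{4}\rceil$ and then on the parity of $\lambda$, exhibit an explicit vertex $i$ in each sub-case, and verify via Lemma~\ref{dis1} that one of $\ell^1(i),\ell^2(i)$ equals $d$ while all other path lengths are $\ge d$. The paper uses exactly your witness $i=\lfloor\tfrac{\lambda}{2}\rfloor s+\tfrac{\gamma}{2}$ (in particular $i=\tfrac{n}{2}$ when $\lambda$ is even) in the large-$\gamma$ regime; in the small-$\gamma$ regime its witnesses differ slightly from yours---it takes $i=(\tfrac{\lambda}{2}-\tfrac{\gamma}{2})s+\tfrac{s}{2}$ attaining $d$ via $\ell^1$ when $\lambda$ is even, and $i=\tfrac{\lambda+1}{2}s+\tfrac{\gamma}{2}+\tfrac{s}{2}+1$ via $\ell^2$ when $\lambda$ is odd (with a separate $\gamma=2$ sub-case)---but the verification follows exactly the template you describe.
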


\begin{proof}
  Let $d=\begin{cases} 
 \lceil \frac{\lambda}{2} \rceil + \frac{s-\gamma}{2}  & \mbox{ if } \ \gamma  \leq 2\lceil \frac{s-2}{4} \rceil, \\
\lfloor \frac{\lambda}{2} \rfloor +\frac{\gamma}{2}  & \mbox{ otherwise. } 
 \end{cases}$\\In this case, $\gamma$ is even and $s\geq 4$ because  if $s=2$, then $\gamma=0$ (see Theorem \ref{0}).

 \textbf{Case 1.} $\gamma  \leq 2\lceil \frac{s-2}{4} \rceil$ 
 
 \qquad \textbf{Case 1.1.} $\lambda$ is even \\ In this case, $\gamma\geq 2$, $\lambda\geq 4$, $s\geq 4$, and $d=\frac{\lambda}{2}+\frac{s-\gamma}{2}.$ Let $i=(\frac{\lambda}{2}-\frac{\gamma}{2})s+\frac{s}{2}$ be a vertex of $C_n(1,s)$. Table \ref{tab10} represents a simple calculation of the lengths $\ell^1(i)$ to $\ell^{4,t}(i)$ provided in Lemma \ref{path}.

 \begin{table}[H]
\centering
   {\begin{tabular}{ | c || l || r | }
     \hline
%     $i$ & $=(\frac{\lambda}{2}-\frac{\gamma}{2})s+\frac{s}{2}$ &  \\ \hline
     $\ell^1(i)$ & $=\frac{\lambda}{2}+\frac{s-\gamma}{2}$ & $=d$ \\ \hline
     $\ell^2(i)$ & $=\frac{\lambda}{2}+\frac{s-\gamma}{2}+1$ & $>d$ \\ \hline
     $\ell^{1,t}(i), \ell^{2,t}(i)$ & $ >\begin{cases} 
\frac{\lambda}{2}+\frac{s-\gamma}{2}  & \mbox{ if } \ t\gamma+\frac{s}{2}<s,\\
\frac{\lambda}{2}+\frac{s-\gamma}{2}+1 & \mbox{ if } \ t\gamma+\frac{s}{2}\geq s.
\end{cases}$ & $>d$ \\ \hline
%     $\ell^{2,t}(i)$ & $ >\begin{cases} 
%\frac{\lambda}{2}+\frac{s-\gamma}{2}+1  & \mbox{ if } \ t%\gamma+\frac{s}{2}<s,\\
%\frac{\lambda}{2}+\frac{s-\gamma}{2}+2 & \mbox{ if } \ t\gamma+\frac{s}{2}\geq s.
%\end{cases}$ & $>d$ \\ \hline
     $\ell^{3,t}(i)$ & $  >\begin{cases} 
\frac{\lambda}{2}+\frac{s+\gamma}{2} & \mbox{ if } \  t\gamma-\frac{s}{2}<0,\\
\frac{\lambda}{2}+\frac{s-\gamma}{2} & \mbox{ if } \ 0\leq t\gamma-\frac{s}{2}<s,\\
\frac{\lambda}{2}+\frac{s+\gamma}{2}+1 & \mbox{ if } \ t\gamma-\frac{s}{2} \geq s.
\end{cases}$ & $> d$ \\ \hline
     $\ell^{4,t}(i)$ & $ \begin{cases} 
\geq \frac{\lambda}{2}+\frac{s-\gamma}{2} & \mbox{ if } \  t\gamma-\frac{s}{2}<0,\\
>\frac{\lambda}{2}+\frac{s-\gamma}{2}+1 & \mbox{ if } \ 0\leq t\gamma-\frac{s}{2}<s,\\
> \frac{\lambda}{2}+\frac{s+\gamma}{2}+2 & \mbox{ if } \ t\gamma-\frac{s}{2} \geq s.
\end{cases}$ & $\geq d$ \\\hline
   \end{tabular}}
   \caption{Values of $\ell^1(i)$, $\ell^2(i)$, $\ell^{1,t}(i)$, \ldots, $\ell^{4,t}(i)$.}
\label{tab10}
\end{table}Thus, by Lemma \ref{dis1}, we have $d(i)  =\ell^1 (i)=d$.

 %\newpage
 
  \qquad \textbf{Case 1.2.} $\lambda$ is odd

If $\gamma=2$ then,     $\lambda\geq 3$, $s\geq 4$, and $d=\frac{\lambda+1}{2}+\frac{s-2}{2}.$ Let $i=\frac{\lambda-1}{2}s+\frac{s}{2}+1$ be a vertex of $C_n(1,s)$. Note that since $s\geq 4$, we have $\frac{s}{2}+1<s$.  By a simple calculation of the lengths $\ell^1(i)$ to $\ell^{4,t}(i)$ represented in Lemma \ref{path}, and by applying Lemma \ref{dis1}, we obtain $d(i)  =\ell^2 (i)=d$.

If $\gamma\geq 4$ then,   $\lambda\geq 5$, $s\geq 6$, and $d=\frac{\lambda+1}{2}+\frac{s-\gamma}{2}.$ Let $i=\frac{\lambda+1}{2}s+\frac{\gamma}{2}+ \frac{s}{2}+1$ be a vertex of $C_n(1,s)$. Since $s> 4$ and $\gamma\leq \lceil \frac{s-2}{2} \rceil$, we have $\frac{\gamma}{2}+ \frac{s}{2}+1<s$. Note that $\frac{\gamma}{2}-2 > -\frac{\gamma}{2}+1$ because $\gamma\geq 4$.  By a simple calculation of the lengths $\ell^1(i)$ to $\ell^{4,t}(i)$ represented in Lemma \ref{path}, and by applying Lemma \ref{dis1}, we obtain $d(i)  =\ell^2 (i)=d$.
%-------------------------------------------

 \textbf{Case 2.} $\gamma  > 2\lceil \frac{s-2}{4} \rceil$

If $\lambda$ is even then, $\gamma\geq 2$, $\lambda\geq 4$, $s\geq 4$, and $d=\frac{\lambda}{2}+\frac{\gamma}{2}.$ Let $i=\frac{n}{2}$ be a vertex of $C_n(1,s)$. Note that since $s>\gamma$, we have $s-\frac{\gamma}{2}>\frac{\gamma}{2}$.  By a simple calculation of the lengths $\ell^1(i)$ to $\ell^{4,t}(i)$ represented in Lemma \ref{path}, and by applying Lemma \ref{dis1}, we obtain $d(i)  =\ell^1 (i)=d$.

If $\lambda$ is odd then,  $\gamma\geq 2$, $\lambda\geq 3$, $s\geq 4$, and $d=\frac{\lambda-1}{2}+\frac{\gamma}{2}.$ Let $i=\frac{\lambda-1}{2}s+\frac{\gamma}{2}$ be a vertex of $C_n(1,s)$. Since $s>\gamma$, we have $s-\frac{\gamma}{2}>\frac{\gamma}{2}$.  By a simple calculation of the lengths $\ell^1(i)$ to $\ell^{4,t}(i)$ represented in Lemma \ref{path}, and by applying Lemma \ref{dis1}, we obtain $d(i)  =\ell^1 (i)=d$. This completes the proof.
\end{proof}

Thus, the following theorem follows from Lemmas \ref{2.1} and \ref{2.2}.

\begin{theorem}
Let $n$ and $s$ be  even.   We have,

$$diam(C_{n}(1,s))=\begin{cases} 
 \lceil \frac{\lambda}{2} \rceil + \frac{s-\gamma}{2} & \mbox{ if } \ \gamma  \leq 2\lceil \frac{s-2}{4} \rceil, \\
\lfloor \frac{\lambda}{2} \rfloor +\frac{\gamma}{2}& \mbox{ otherwise. } 
 \end{cases}$$
\end{theorem}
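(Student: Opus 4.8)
The plan is to prove the theorem as a matching two-sided estimate, sandwiching $diam(C_n(1,s))$ between the upper bound of Lemma \ref{2.1} and a lower bound coming from an explicitly exhibited vertex. By Lemma \ref{dis2} we already know $diam(C_n(1,s))=\max\{d(i):2\le i\le \lfloor n/2\rfloor\}$, so writing $d$ for the piecewise expression in the statement, it suffices to establish (i) $d(i)\le d$ for every vertex $i$, and (ii) $d(i)=d$ for at least one vertex $i$.

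For the upper bound (i) I would simply invoke Lemma \ref{2.1}, which records Chen's inequality $diam(C_n(1,s))\le d$ in the case $n,s$ even; no further argument is needed. For the lower bound (ii) I would invoke Lemma \ref{2.2}, which supplies, in each relevant parameter regime, a concrete vertex $i$ of $C_n(1,s)$ together with a verification that $d(i)=d$. Since $d(i)\le diam(C_n(1,s))$ holds trivially for any vertex, this gives $diam(C_n(1,s))\ge d$. Combining with (i) yields $diam(C_n(1,s))=d$, which is exactly the assertion of the theorem.

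The substantive content — and hence the main obstacle — is entirely inside Lemma \ref{2.2}, which we are free to assume here: one must correctly identify the ``farthest'' vertex $i$ in each of the four situations ($\lambda$ even or odd, crossed with $\gamma\le 2\lceil\frac{s-2}{4}\rceil$ or $\gamma>2\lceil\frac{s-2}{4}\rceil$), and then check, using the six length formulas $\ell^1(i),\ell^2(i),\ell^{1,t}(i),\ell^{2,t}(i),\ell^{3,t}(i),\ell^{4,t}(i)$ of Lemma \ref{path} and the distance formula of Lemma \ref{dis1}, that the minimum of these six quantities over all admissible $t$ with $1\le t\le \frac{s}{\gcd(n,s)}$ equals $d$. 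Concretely this amounts to verifying each of the six candidate lengths is $\ge d$, with equality attained by $\ell^1(i)$ or $\ell^2(i)$ — precisely the estimates collected in Table \ref{tab10} and in the case analysis following it. Once Lemma \ref{2.2} is in place, as it is here, the theorem follows immediately by the sandwich argument above.
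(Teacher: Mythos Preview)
Your proposal is correct and mirrors the paper's own argument exactly: the theorem is stated immediately after Lemmas \ref{2.1} and \ref{2.2} with the one-line justification that it follows from these two lemmas, i.e., the upper bound from Lemma \ref{2.1} together with the matching lower bound from the explicit vertex in Lemma \ref{2.2}. Your description of the sandwich argument and of where the real work lies (inside Lemma \ref{2.2}) is accurate.
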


\subsection{Diameter of $C_{n}(1,s)$ when $n$ is odd, $s$ is odd, and $\lambda > \gamma > 0$}

\begin{lemma}\cite{chen2005diameter}
\label{3.1}
Let $n$ and $s$ be  odd.   We have,

 $$diam(C_{n}(1,s))\leq \lceil \frac{\lambda}{2} \rceil + \frac{s-1}{2} -(\min( \lceil \frac{\gamma +1}{2} \rceil , \lceil \frac{s-\gamma+2}{2} \rceil) -1).$$

\end{lemma}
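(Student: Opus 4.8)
This is the upper bound of \cite{chen2005diameter}; here is how one can re-derive it inside the framework of Lemmas \ref{path}--\ref{dis2}. Write $d=\lceil\frac{\lambda}{2}\rceil+\frac{s-1}{2}-\bigl(\min(\lceil\frac{\gamma+1}{2}\rceil,\lceil\frac{s-\gamma+2}{2}\rceil)-1\bigr)$. By Lemma \ref{dis2} it is enough to prove $d(i)\le d$ for every $i$ with $2\le i\le\lfloor\frac{n}{2}\rfloor$, and by Lemma \ref{dis1} this in turn follows once, for each such $i$, we exhibit one of the six lengths of Lemma \ref{path} that is $\le d$. So the whole proof reduces to: for each admissible $i$, name an explicit short path and estimate its length.

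Fix $i$ and write $i=qs+r$ with $0\le r\le s-1$. Since $n$ is odd, $i\le\lfloor\frac{n}{2}\rfloor$ means $qs+r\le\frac{n-1}{2}=\frac{\lambda s+\gamma-1}{2}$, which caps $q$ at about $\lfloor\frac{\lambda}{2}\rfloor$ and, when $q$ attains that cap, forces $r$ to be small. The forward paths contribute $\ell^{1}(i)=q+r$ and $\ell^{2}(i)=q+1+s-r$. Taking $t=1$ in Lemma \ref{path} and reducing $n-i$ modulo $s$ gives the two ``backward'' paths: if $r\le\gamma$ then $n-i=(\lambda-q)s+(\gamma-r)$, so $\ell^{3,1}(i)=\lambda-q+\gamma-r$ and $\ell^{4,1}(i)=1+s+\lambda-q-\gamma+r$; if $r>\gamma$ then $n-i=(\lambda-q-1)s+(s+\gamma-r)$, so $\ell^{3,1}(i)=\lambda-q-1+s+\gamma-r$ and $\ell^{4,1}(i)=\lambda-q-\gamma+r$. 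For the parameter range considered these four quantities already suffice; the wrap-around paths $\ell^{1,1},\ell^{2,1}$ (and $t\ge 2$) can only help.

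The argument then splits on the residue $r$, and on the parity of $\lambda$ --- noting that $\lambda$ and $\gamma$ have opposite parities because $n$ and $s$ are both odd. Roughly: for small $r$ one balances $\ell^{1}(i)$ against $\ell^{3,1}(i)$; for $r$ close to $s$ one balances $\ell^{2}(i)$ against $\ell^{3,1}(i)$ or $\ell^{4,1}(i)$; in the intermediate range $\ell^{1}(i)$ or $\ell^{4,1}(i)$ is smallest. In each regime one selects the appropriate length, uses the cap $qs+r\le\frac{n-1}{2}$ to bound $q$, and checks the result is $\le d$, with a separate look at the transition residues $r=\gamma$ and $r$ near $\frac{s+\gamma}{2}$ where the expression for $n-i$ changes form. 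This mirrors exactly the case partition used in the existence statements (the sets $R_1,\dots,R_4$ in the proof of Lemma \ref{1.2}).

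The step I expect to be the obstacle is the bookkeeping around $\min(\lceil\frac{\gamma+1}{2}\rceil,\lceil\frac{s-\gamma+2}{2}\rceil)$: its two arguments correspond to two genuinely different worst-case vertices (one with residue near $0$, one with residue near $s-\gamma$), and the case where the two arguments are equal, combined with the ceiling/floor and parity juggling at the boundary residues, is where the inequalities are tight and must be verified with care. Alternatively, as done here, one may simply cite \cite{chen2005diameter} for this upper bound and spend the effort on the matching existence lemma.
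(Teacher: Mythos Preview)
The paper does not prove this lemma; it simply cites \cite{chen2005diameter}, and your closing sentence correctly identifies that. Your sketch of a re-derivation via Lemmas \ref{path}--\ref{dis2} is a reasonable outline, but as you yourself flag, the case analysis at the boundary residues is only indicated, not carried out; if you want this to stand as an independent proof rather than a citation, those tight cases would need to be written out in full.
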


\begin{lemma}
\label{3.2}
Let $n$ and $s$ be  odd. There exists a vertex $i$ of $C_{n}(1,s)$ such that, 
$$d(i)= \lceil \frac{\lambda}{2} \rceil + \frac{s-1}{2} -(\min( \lceil \frac{\gamma +1}{2} \rceil , \lceil \frac{s-\gamma+2}{2} \rceil) -1).$$
\end{lemma}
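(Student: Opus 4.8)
The plan is to mirror the strategy used for Lemma~\ref{1.2}: exhibit, for each admissible value of $\gamma$, an explicit vertex $i$ written in the form $i=\alpha s+\beta$ with $\alpha$ close to $\lambda/2$ and $\beta$ close to $s/2$ (possibly shifted by one of $\tfrac{\gamma}{2}$, $\tfrac{\gamma+1}{2}$, $\tfrac{\gamma-1}{2}$), then read off the six path lengths $\ell^{1}(i),\ell^{2}(i),\ell^{1,t}(i),\ell^{2,t}(i),\ell^{3,t}(i),\ell^{4,t}(i)$ of Lemma~\ref{path}, and check that their minimum is exactly $d:=\lceil\tfrac{\lambda}{2}\rceil+\tfrac{s-1}{2}-(\min(\lceil\tfrac{\gamma+1}{2}\rceil,\lceil\tfrac{s-\gamma+2}{2}\rceil)-1)$. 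By Lemma~\ref{dis1} this gives $d(i)=d$, and by Lemma~\ref{dis2} it suffices to exhibit such an $i$ with $2\le i\le\lfloor n/2\rfloor$, which the constructed vertices will satisfy.

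First I would use that, since $n$ and $s$ are both odd, $\lambda$ and $\gamma$ have opposite parity; accordingly I would partition the set $R=\{1,\dots,s-1\}$ of possible values of $\gamma$ according to the parity of $\gamma$ and according to whether $\gamma$ lies below or above the threshold at which $\min(\lceil\tfrac{\gamma+1}{2}\rceil,\lceil\tfrac{s-\gamma+2}{2}\rceil)$ switches from its first to its second argument. This yields a handful of subcases completely parallel to the sets $R_1,\dots,R_4$ in the proof of Lemma~\ref{1.2}, together with the small boundary cases ($\gamma=1$, and $\gamma=2$, which forces $\lambda$ odd) and the degenerate small-$s$ cases, which must be handled by direct computation. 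In each subcase I would resolve all the $\min$ and $\lceil\cdot\rceil$ symbols so that $d$ becomes an explicit affine expression in $\lambda,s,\gamma$, and choose $\beta$ so that either $\ell^{1}(i)=\alpha+\beta$ or $\ell^{2}(i)=1+s-\beta+\alpha$ equals that value exactly.

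The substantive work — and the main obstacle — is showing that none of the other five lengths drops below $d$ for any $t$ with $1\le t\le\tfrac{s}{\gcd(n,s)}$. For this I would write $tn+i=(t\lambda+\alpha+c_t)s+\rho_t$ and $tn-i=((t-1)\lambda+\alpha'+c'_t)s+\bar\rho_t$, where $\rho_t,\bar\rho_t$ are the residues of $t\gamma+\beta$ and $t\gamma-\beta$ modulo $s$ and $c_t,c'_t$ are the corresponding carries, and then bound each resulting length $q_t+r_t$ (resp.\ $1+s-r_t+q_t$, and the barred analogues) from below. The key point, exactly as in Lemma~\ref{1.2}, is that the term $t\lambda$ already makes $q_t$ large once $t\ge 1$, so each $t$-indexed length comfortably exceeds $d$; the only genuinely delicate instances are $t=1$ with the residue of $\gamma\pm\beta$ wrapping around the block $\{0,\dots,s-1\}$, which is precisely why $\beta$ must be placed carefully, and those I would check by hand. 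I expect to organize these computations in a table, as in Lemma~\ref{2.2}, with one row per length and a side column recording the comparison with $d$.

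Finally, combining this lemma with the upper bound of Lemma~\ref{3.1} yields the exact value $diam(C_{n}(1,s))=\lceil\tfrac{\lambda}{2}\rceil+\tfrac{s-1}{2}-(\min(\lceil\tfrac{\gamma+1}{2}\rceil,\lceil\tfrac{s-\gamma+2}{2}\rceil)-1)$ for $n$ and $s$ odd with $\lambda>\gamma>0$.
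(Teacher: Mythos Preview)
Your proposal is correct and follows essentially the same approach as the paper: the paper partitions $R=\{1,\dots,s-1\}$ into four pieces $R_1,\dots,R_4$ according to the parity of $\gamma$ and whether $\gamma$ lies below or above the threshold where the $\min$ switches, exhibits in each case an explicit vertex $i=\alpha s+\beta$ with $\alpha$ near $\lambda/2$ and $\beta$ near $s/2$ (shifted by a term in $\gamma$), and verifies via the six lengths of Lemma~\ref{path} that $d(i)=d$, organizing one representative computation in a table. Your observation that $\lambda$ and $\gamma$ have opposite parity (since $n,s$ are odd) and your identification of the boundary cases match the paper's case split exactly.
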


\begin{proof}
Let  $d=\lceil \frac{\lambda}{2} \rceil + \frac{s-1}{2} -(\min( \lceil \frac{\gamma +1}{2} \rceil , \lceil \frac{s-\gamma+2}{2} \rceil) -1).$   Let $R=\{1,2, \ldots, s-1\}$ be the set of all the possible values of $\gamma$. This set can be partitioned into: $R_1=\{2m-1: 1\leq m \leq \lfloor \frac{s+3}{4} \rfloor\}$, $R_2=\{2m-2: 2\leq m \leq \lceil \frac{s+5}{4} \rceil\}$, $R_3=\{s-2m+2: 2\leq m \leq \lceil \frac{s-1}{4} \rceil\}$, and $R_4=\{s-2m+3: 2\leq m \leq \lceil \frac{s-1}{4} \rceil\}$. Moreover, $R=R_1\cup R_2 \cup R_3 \cup R_4.$ In fact, when $\gamma$ is odd (i.e., $\gamma\in \{R_1, R_3\}$), we have $R_1=\{1,3,5,\ldots, 2\lfloor \frac{s+3}{4} \rfloor -1\}$ and $R_3=\{s-2\lceil \frac{s-1}{4} \rceil +2, \ldots, s-2\}$. It is easy to verify that, when $s$ is odd, we obtain $2\lfloor \frac{s+3}{4} \rfloor -1+2=s-2\lceil \frac{s-1}{4} \rceil +2$. Similarly, when $\gamma$ is even, we have $2\lceil \frac{s+5}{4} \rceil -2 +2=s-2\lceil \frac{s-1}{4} \rceil +3$.

\vskip 0.2cm
\quad\textbf{Case 1.} $\gamma \in R_1$ \\ Since $\gamma$ is odd, we have $\min( \lceil \frac{\gamma +1}{2} \rceil , \lceil \frac{s-\gamma+2}{2} \rceil)=\min(  \frac{\gamma+1}{2} ,  \frac{s-\gamma+2}{2})= \begin{cases} 
\frac{\gamma+1}{2} & \mbox{ if } \ \gamma  \leq  \frac{s+1}{2},\\
\frac{s-\gamma+2}{2} & \mbox{ otherwise. } 
\end{cases}$ \\However, we have $\gamma =2m-1$ where $1\leq m \leq \lfloor \frac{s+3}{4} \rfloor$. Thus, if $s\equiv 1 \pmod 4$, then  $\gamma \leq \frac{s+1}{2}$.  Otherwise, if $s\equiv 3 \pmod 4$, then  $\gamma \leq \frac{s-1}{2}$.  Hence, $\min(  \frac{\gamma +1}{2} ,  \frac{s-\gamma+2}{2})= \frac{\gamma +1}{2}.$   Moreover, we have $\gamma\geq 1$, $\lambda\geq 2$, $s\geq 3$, and  $d= \frac{\lambda}{2}  + \frac{s+1}{2} -\frac{\gamma +1}{2}.$  Let $i=(\frac{\lambda}{2}-1)s+\frac{s-1}{2}+\frac{\gamma +1}{2}$ be a vertex of $C_n(1,s).$  Since $\gamma \leq \frac{s-1}{2}$, we have $\frac{s-1}{2}+\frac{\gamma +1}{2}<s$. Note that $\frac{\gamma+1}{2}-1\geq -\frac{\gamma+1}{2}+1$ because $\gamma\geq 1$.  Table \ref{tab15} represents a simple calculation of the lengths $\ell^1(i)$ to $\ell^{4,t}(i)$ provided in Lemma \ref{path}.

\begin{table}[H]
\centering
   {\begin{tabular}{ | c || l || r | }
     \hline
%     $i$ & $=(\frac{\lambda}{2}-1)s+\frac{s-1}{2}+\frac{\gamma +1}{2}$ &  \\ \hline
     $\ell^1(i)$ & $=\frac{\lambda}{2}+\frac{s-1}{2}+\frac{\gamma +1}{2}-1$ & $\geq d$ \\ \hline
     $\ell^2(i)$ & $=\frac{\lambda}{2}  + \frac{s+1}{2} -\frac{\gamma +1}{2}$ & $=d$ \\ \hline
     $\ell^{1,t}(i), \ell^{2,t}(i)$ & $ >\begin{cases} 
\frac{\lambda}{2}  + \frac{s+1}{2} -\frac{\gamma +1}{2} & \mbox{ if }  t\gamma+\frac{\gamma+1}{2}+\frac{s-1}{2}<s,\\
\frac{\lambda}{2}  + \frac{s+1}{2} -\frac{\gamma +1}{2} & \mbox{ if }  t\gamma+\frac{\gamma+1}{2}+\frac{s-1}{2}\geq s.
\end{cases}$ & $>d$ \\ \hline
%     $\ell^{2,t}(i)$ & $ >\begin{cases} 
%\frac{\lambda}{2}  + \frac{s+1}{2} -\frac{\gamma +1}{2} & \mbox{ if } \ 0\leq t\gamma+\frac{\gamma+1}{2}+\frac{s-1}{2}<s,\\
%\frac{\lambda}{2}  + \frac{s+1}{2} -\frac{\gamma +1}{2} & \mbox{ if } \ t\gamma+\frac{\gamma+1}{2}+\frac{s-1}{2}\geq s.
%\end{cases}$ & $>d$ \\ \hline
     $\ell^{3,t}(i), \ell^{4,t}(i)$ & $  >\begin{cases} 
\frac{\lambda}{2}  + \frac{s+1}{2} -\frac{\gamma +1}{2} & \mbox{ if } \ t\gamma-\frac{\gamma+1}{2}-\frac{s-1}{2}<0,\\
\frac{\lambda}{2}  + \frac{s+1}{2} -\frac{\gamma +1}{2} & \mbox{ if } \ t\gamma-\frac{\gamma+1}{2}-\frac{s-1}{2}<s,\\
\frac{\lambda}{2}  + \frac{s+1}{2} -\frac{\gamma +1}{2} & \mbox{ if }  t\gamma-\frac{\gamma+1}{2}-\frac{s-1}{2}\geq s.
\end{cases}$ & $> d$ \\ \hline
%     $\ell^{4,t}(i)$ & $ >\begin{cases} 
%\frac{\lambda}{2}  + \frac{s+1}{2} -\frac{\gamma +1}{2} & \mbox{ if } \ (t-1)\gamma+\frac{\gamma-1}{2}-\frac{s-1}{2}<0,\\
%\frac{\lambda}{2}  + \frac{s+1}{2} -\frac{\gamma +1}{2} & \mbox{ if } \ 0\leq (t-1)\gamma+\frac{\gamma-1}{2}-\frac{s-1}{2}<s,\qquad \qquad\\
%\frac{\lambda}{2}  + \frac{s+1}{2} -\frac{\gamma +1}{2} & \mbox{ if } \ (t-1)\gamma+\frac{\gamma-1}{2}-\frac{s-1}{2}\geq s.
%\end{cases}$ & $> d$ \\\hline
   \end{tabular}}
   \caption{Values of $\ell^1(i)$, $\ell^2(i)$, $\ell^{1,t}(i)$, \ldots, $\ell^{4,t}(i)$.}
\label{tab15}
\end{table} Thus, by Lemma \ref{dis1},  we have $d(i)  =\ell^2 (i)=d$.

\vskip 0.1cm 
\quad\textbf{Case 2.} $\gamma \in R_2$ \\ Since $\gamma$ is even, we have $\min( \lceil \frac{\gamma +1}{2} \rceil , \lceil \frac{s-\gamma+2}{2} \rceil)=\min(  \frac{\gamma+2}{2} ,  \frac{s-\gamma+3}{2})= \begin{cases} 
\frac{\gamma+2}{2} & \mbox{ if } \ \gamma  \leq  \frac{s+1}{2},\\
\frac{s-\gamma+3}{2} & \mbox{ if } \gamma  \geq  \frac{s+3}{2}.
\end{cases}$ \\However, we have $\gamma=2m-2$ where $ 2\leq m \leq \lceil \frac{s+5}{4} \rceil.$ Thus, if $s\equiv 1 \pmod 4$, then  $\gamma \leq \frac{s+3}{2}$.  Otherwise, if $s\equiv 3 \pmod 4$, then  $\gamma \leq \frac{s+1}{2}$.  Hence, $\min(  \frac{\gamma +1}{2} ,  \frac{s-\gamma+2}{2})= \frac{\gamma +1}{2}.$  Therefore,  $\gamma\geq 2$, $\lambda\geq 3$, $s\geq 3$, and $d= \begin{cases} 
\frac{\lambda+1}{2}  + \frac{s+1}{2} -\frac{s-\gamma+3}{2} & \mbox{ if } \ \gamma  =  \frac{s+3}{2},\\
\frac{\lambda+1}{2}  + \frac{s+1}{2}-\frac{\gamma+2}{2} & \mbox{ if } \ \gamma  \leq  \frac{s+1}{2}.
\end{cases}$

\qquad\quad\textbf{Case 2.1.} $\gamma=\frac{s+3}{2}$ \\ In this case, $d=\frac{\lambda+1}{2}+\frac{s-1}{4}$. Let $i=\frac{\lambda+1}{2}s+\frac{s-1}{4}$ be a vertex of $C_n(1,s)$. Note that  $s-\frac{s-1}{4}>\frac{s-1}{4} $.  By a simple calculation of the lengths $\ell^1(i)$ to $\ell^{4,t}(i)$ represented in Lemma \ref{path}, and by applying Lemma \ref{dis1}, we obtain $d(i)  =\ell^1 (i)=d$.

\qquad\quad\textbf{Case 2.2.} $\gamma\leq \frac{s+1}{2}$

If $s=3$ then,   $\gamma=2$, $\lambda \geq 3$, and $d=\frac{\lambda+1}{2}.$ Let $i=\frac{\lambda+1}{2}s$ be a vertex of $C_n(1,s)$.  By a simple calculation of the lengths $\ell^1(i)$ to $\ell^{4,t}(i)$ represented in Lemma \ref{path}, and by applying Lemma \ref{dis1}, we obtain $d(i)  =\ell^1 (i)=d$.

If $s>3$ then, $\gamma\geq 2$, $\lambda\geq 3$, and $d= \frac{\lambda+1}{2}  + \frac{s+1}{2} -\frac{\gamma+2}{2}$. Let $i=\frac{\lambda-1}{2}s+ \frac{s-1}{2}+\frac{\gamma+2}{2}$ be a vertex of $C_n(1,s)$. Since $s>3$ and $\gamma\leq \frac{s+1}{2}$, we get $\frac{s-1}{2}+\frac{\gamma+2}{2}<s.$ Note that $\frac{\gamma+2}{2}>  -\frac{\gamma+2}{2}+2$ and $-\frac{\gamma-2}{2}>  -\frac{\gamma+2}{2}+1$ because $\gamma\geq 2$.  By a simple calculation of the lengths $\ell^1(i)$ to $\ell^{4,t}(i)$ represented in Lemma \ref{path}, and by applying Lemma \ref{dis1}, we obtain $d(i)  =\ell^2 (i)=d$.

\vskip 0.1cm
\quad\textbf{Case 3.} $\gamma \in R_3$  \\ In this case,  $\gamma\geq 3$, $\lambda \geq 4$, $s\geq 3$, and $d=\frac{\lambda }{2}  + \frac{s+1}{2} -\frac{s-\gamma+2}{2}.$ Let $i=(\frac{\lambda}{2}- \frac{s-\gamma+2}{2})s + \frac{s+1}{2}$ be a vertex of $C_n(1,s)$.  By a simple calculation of the lengths $\ell^1(i)$ to $\ell^{4,t}(i)$ represented in Lemma \ref{path}, and by applying Lemma \ref{dis1}, we obtain $d(i)  =\ell^1 (i)=\ell^2 (i)=d$.

\vskip 0.1cm
\quad\textbf{Case 4.} $\gamma \in R_4$  \\ We have $\gamma\geq 4$, $\lambda\geq 5$, $s\geq 3,$ and   $d=\frac{\lambda +1}{2}  + \frac{s+1}{2} -\frac{s-\gamma+3}{2}.$ Let $i=(\frac{\lambda+1}{2}- \frac{s-\gamma+3}{2})s + \frac{s-1}{2}$ be a vertex of $C_n(1,s)$.  By a simple calculation of the lengths $\ell^1(i)$ to $\ell^{4,t}(i)$ represented in Lemma \ref{path}, and by applying Lemma \ref{dis1}, we obtain $d(i)  =\ell^1 (i)=\ell^2 (i)=d$. \\Consequently,   for all $\gamma\in R$, there exists a vertex $i$ of $C_{n}(1,s)$ such that, 
$d(i)= \lceil \frac{\lambda}{2} \rceil + \frac{s-1}{2} -(\min( \lceil \frac{\gamma +1}{2} \rceil , \lceil \frac{s-\gamma+2}{2} \rceil) -1).$
\end{proof}

Thus, the following theorem follows from Lemmas \ref{3.1} and \ref{3.2}.

\begin{theorem}
Let $n$ and $s$ be  odd.   We have,

 $$diam(C_{n}(1,s))= \lceil \frac{\lambda}{2} \rceil + \frac{s-1}{2} -(\min( \lceil \frac{\gamma +1}{2} \rceil , \lceil \frac{s-\gamma+2}{2} \rceil) -1).$$
\end{theorem}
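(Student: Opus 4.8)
The plan is short, because this theorem is precisely the conjunction of the two preceding lemmas. Set
$D := \lceil \tfrac{\lambda}{2}\rceil + \tfrac{s-1}{2} - \big(\min(\lceil\tfrac{\gamma+1}{2}\rceil,\lceil\tfrac{s-\gamma+2}{2}\rceil)-1\big)$.
Lemma~\ref{3.1} gives the upper bound $diam(C_n(1,s))\le D$; Lemma~\ref{3.2} produces a vertex $i_0$ with $d(i_0)=D$, whence $diam(C_n(1,s))\ge d(i_0)=D$ (and one checks the witnesses built in Lemma~\ref{3.2} lie in $\{2,\dots,\lfloor n/2\rfloor\}$, so this is also consistent with Lemma~\ref{dis2}). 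The two inequalities together yield the stated equality, and nothing further is needed. If one wanted a self-contained argument, the upper-bound half would be re-derived by the covering argument of \cite{chen2005diameter} (showing every residue class mod $n$ is reached from $0$ by a path using at most $\lceil\lambda/2\rceil$ inner steps and at most $\tfrac{s-1}{2}$ outer steps, after the saving coming from the $\min$-term), while the lower-bound half is exactly Lemma~\ref{3.2}.

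Consequently all the content lives in Lemma~\ref{3.2}, and I would organise its proof along the partition $R=\{1,\dots,s-1\}=R_1\cup R_2\cup R_3\cup R_4$ of the admissible values of $\gamma$, split by the parity of $\gamma$ and by whether $\gamma\le\tfrac{s+1}{2}$ or $\gamma>\tfrac{s+1}{2}$ (this determines which branch of the $\min$ is active, hence the precise shape of $D$). For each family I would pick an explicit witness $i=qs+r$ engineered so that one of $\ell^1(i)=r+q$ or $\ell^2(i)=1+s-r+q$ from Lemma~\ref{path} equals $D$: roughly, $q\approx\lceil\lambda/2\rceil$ and $r$ is $\tfrac{s-1}{2}$ adjusted by the active $\min$-term. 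Then, via Lemma~\ref{dis1}, the task is to verify $d(i)=D$, i.e.\ that $D$ is already the minimum of the six lengths $\ell^1(i),\ell^2(i),\ell^{1,t}(i),\ell^{2,t}(i),\ell^{3,t}(i),\ell^{4,t}(i)$ over all admissible $t$.

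The main obstacle is exactly that last point: showing the ``wrap-around'' lengths $\ell^{1,t},\ell^{2,t},\ell^{3,t},\ell^{4,t}$ are $\ge D$ for every $t$ with $1\le t\le \tfrac{s}{gcd(n,s)}$. Writing $tn\pm i=q_\star s+r_\star$, the quotient $q_\star$ grows like $t\lambda$ and already overshoots $D$ once $t\ge 1$; the delicate part is controlling the residue contribution $r_\star$ and, above all, the carry that appears when the shifted residue (such as $t\gamma+\tfrac{\gamma+1}{2}+\tfrac{s-1}{2}$) reaches $s$, which forces a secondary case split on the size of $t$. This is the bookkeeping recorded in Table~\ref{tab15} for $\gamma\in R_1$ and done inline for $R_2,R_3,R_4$. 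Finally, the small modulus $s=3$ and the boundary value $\gamma=\tfrac{s+3}{2}$ inside $R_2$ do not fit the generic witness and must be handled separately with an ad hoc choice of $i$. Once all four families and these edge cases are dispatched, Lemma~\ref{3.2} is established and the theorem follows immediately.
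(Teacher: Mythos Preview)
Your proposal is correct and matches the paper's approach exactly: the theorem is stated as an immediate consequence of Lemmas~\ref{3.1} and~\ref{3.2}, and your sketch of Lemma~\ref{3.2} (the partition $R=R_1\cup R_2\cup R_3\cup R_4$, explicit witnesses $i=qs+r$ with $q\approx\lceil\lambda/2\rceil$, verification via Lemma~\ref{dis1} with case splits on the carry in $t\gamma\pm r$, and separate handling of $s=3$ and $\gamma=\tfrac{s+3}{2}$) is precisely how the paper proceeds.
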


\subsection{Diameter of $C_{n}(1,s)$ when $n$ is odd, $s$ is even, and $\lambda > \gamma > 0$}

\begin{lemma}\cite{chen2005diameter}
\label{4.1}
Let $n$ odd and $s$ even.  we have,
$$diam(C_{n}(1,s))\leq \begin{cases} 
\lceil \frac{\lambda}{2} \rceil + \frac{s-2}{2} & \mbox{ if } \ \gamma = 1 \mbox{ or } \gamma = s-1, \\
 \lfloor \frac{\lambda}{2} \rfloor + \frac{s-\gamma+1}{2} & \mbox{ if } \ 3\leq \gamma  \leq 2\lceil \frac{s}{4} \rceil -1, \\
\lceil \frac{\lambda}{2} \rceil +\frac{\gamma -1}{2} & \mbox{ otherwise. } 
 \end{cases}$$ 
\end{lemma}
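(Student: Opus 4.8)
The plan is to prove the bound one vertex at a time, using the path framework already in place. By Lemma~\ref{dis2} it suffices to show $d(i)\le d$ for every $i$ with $2\le i\le\lfloor n/2\rfloor$, where $d$ is the right-hand side of the asserted inequality; and by Lemma~\ref{dis1}, $d(i)$ equals the minimum of the six lengths attached to the class $\mathcal{C}$ of Lemma~\ref{path}, so for each such $i$ it is enough to produce \emph{one} member of $\mathcal{C}$ of length at most $d$. Since $n$ is odd and $s$ is even, $\gamma$ is odd; the hypothesis therefore falls into four residue regimes, which (after splitting $\gamma=s-1$ off the last one) line up with the three displayed branches: $\gamma=1$; $\gamma=s-1$; $3\le\gamma\le 2\lceil s/4\rceil-1$; and $2\lceil s/4\rceil+1\le\gamma\le s-3$. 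Within each regime I would split on the parity of $\lambda$. Write $i=qs+r$ with $0\le r\le s-1$, so $\ell^{1}(i)=q+r$ and $\ell^{2}(i)=q+1+s-r$; and $n-i=\bar q_{1}s+\bar r_{1}$, so $\ell^{3,1}(i)=\bar q_{1}+\bar r_{1}$ and $\ell^{4,1}(i)=\bar q_{1}+1+s-\bar r_{1}$. Essentially only these four paths are needed, since wrapping around the inner-edge cycles more than once can only lengthen a path that reaches a first-half vertex. Adding $i$ and $n-i$ yields the two identities $q+\bar q_{1}\in\{\lambda-1,\lambda\}$ and $r+\bar r_{1}\in\{\gamma,\gamma+s\}$, the smaller alternative of each occurring simultaneously; this is the bridge between a path that runs straight forward from $0$ and one that wraps once around, and it drives the entire estimate.

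Using $d(i)=d(n-i)$ (Lemma~\ref{dis2}), which exchanges $(q,r)$ with $(\bar q_{1},\bar r_{1})$, we may assume $q\le\bar q_{1}$, hence $q\le\lfloor\lambda/2\rfloor$; and $i\le\lfloor n/2\rfloor=(\lambda s+\gamma-1)/2$ additionally confines $r$ to a short interval whenever $q$ is maximal ($r\le(\gamma-1)/2$ if $\lambda$ is even, $r\le(s+\gamma-1)/2$ if $\lambda$ is odd). Then $d(i)\le\min\{q+\min(r,\,s+1-r),\ \bar q_{1}+\min(\bar r_{1},\,s+1-\bar r_{1})\}$, and substituting $\bar q_{1}=\lambda-q$ or $\lambda-1-q$ and $\bar r_{1}=\gamma-r$ or $\gamma+s-r$ lets one maximize the right-hand side over the admissible $(r,\bar r_{1})$. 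The outcome is branch-dependent: for $\gamma=1$ and $\gamma=s-1$ the extremal vertex sits at (or one step from) a multiple of $s$, so the generic estimate $q+\lceil s/2\rceil$ is tightened by one and the term $\frac{s-2}{2}$ emerges; for $3\le\gamma\le 2\lceil s/4\rceil-1$ the wrap pair $\ell^{3,1},\ell^{4,1}$ is the cheaper, with $\lfloor\lambda/2\rfloor$ inner hops, giving $\lfloor\lambda/2\rfloor+\frac{s-\gamma+1}{2}$; and for larger $\gamma$ the straight pair $\ell^{1},\ell^{2}$ wins, with $\lceil\lambda/2\rceil$ inner hops, giving $\lceil\lambda/2\rceil+\frac{\gamma-1}{2}$.

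The hard part is purely the bookkeeping: getting the floors and ceilings to land exactly on the stated values in every sub-case, and in particular explaining the threshold $2\lceil s/4\rceil-1$. That threshold is precisely where the better of ``take $r$ outer steps after an even number of inner hops'' and ``take $s+1-r$ outer steps after an odd number of inner hops'' changes, once the parity gap $\lceil\lambda/2\rceil-\lfloor\lambda/2\rfloor$ is absorbed --- below it the wrap-around path is cheaper (inner-hop count $\lfloor\lambda/2\rfloor$), above it the forward path is (inner-hop count $\lceil\lambda/2\rceil$). Carrying the $\gamma$-odd, $s$-even arithmetic through each branch so the estimate is tight, and checking by hand the handful of boundary vertices near $\lfloor n/2\rfloor$ where $q$ is maximal and $r$ lives in the short interval above, is the real labour; the rest is the soft reduction via Lemmas~\ref{dis1} and \ref{dis2}. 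That this upper bound is attained is the content of the companion existence lemma that follows.
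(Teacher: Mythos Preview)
The paper does not supply its own proof of this lemma: it is quoted from Chen--Xiao--Parhami \cite{chen2005diameter}, so there is nothing on the paper's side to compare against beyond the citation. Your proposal is therefore an \emph{independent} argument built on the paper's path machinery (Lemmas~\ref{path}, \ref{dis1}, \ref{dis2}), which is a perfectly legitimate route and in fact the natural one given the surrounding text.

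As a plan the outline is sound, but it is explicitly a sketch rather than a proof (``the real labour\dots is the bookkeeping''), and two concrete points need fixing before the bookkeeping can even begin. First, the coupling of your two identities is stated backwards: from $(q+\bar q_{1})s+(r+\bar r_{1})=n=\lambda s+\gamma$ with $0\le r,\bar r_{1}<s$ one gets either $(q+\bar q_{1},\,r+\bar r_{1})=(\lambda,\gamma)$ or $(\lambda-1,\gamma+s)$, so the \emph{smaller} quotient-sum pairs with the \emph{larger} remainder-sum, not both small together. Second, the assertion that ``essentially only these four paths are needed, since wrapping around the inner-edge cycles more than once can only lengthen a path'' is precisely where the ambient hypothesis $\lambda>\gamma$ (carried by the section heading but absent from the lemma statement) enters, and it is not free: without $\lambda>\gamma$ larger values of $t$ can and do shorten paths, which is why the paper treats $\lambda\le\gamma$ separately. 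You should state $\lambda>\gamma$ explicitly and give the one-line reason why $t\ge2$ never helps under it (each extra wrap adds at least $\lambda$ inner edges while saving at most $\gamma$ outer edges). With those two repairs and the case-by-case arithmetic actually carried out, the argument would stand.
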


\begin{lemma}
\label{4.2}
Let $n$ odd and $s$ even. There exists a vertex $i$ of $C_{n}(1,s)$ such that, 
$$d(i)= \begin{cases} 
\lceil \frac{\lambda}{2} \rceil + \frac{s-2}{2} & \mbox{ if } \ \gamma = 1 \mbox{ or } \gamma = s-1, \\
 \lfloor \frac{\lambda}{2} \rfloor + \frac{s-\gamma+1}{2} & \mbox{ if } \ 3\leq \gamma  \leq 2\lceil \frac{s}{4} \rceil -1, \\
\lceil \frac{\lambda}{2} \rceil +\frac{\gamma -1}{2} & \mbox{ otherwise. } 
 \end{cases}$$
\end{lemma}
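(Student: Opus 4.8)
The plan is to follow the template already used in Lemmas \ref{1.2}, \ref{2.2} and \ref{3.2}: set $d$ equal to the right-hand side of the claimed formula, exhibit an explicit vertex $i = qs+r$ with $0 \le r < s$ realizing it, and then invoke Lemma \ref{dis1}, checking that exactly one of the six quantities $\ell^1(i),\ell^2(i),\ell^{1,t}(i),\ell^{2,t}(i),\ell^{3,t}(i),\ell^{4,t}(i)$ from Lemma \ref{path} equals $d$ while all the others are $\ge d$ for every admissible $t$ with $1 \le t \le s/\gcd(n,s)$.

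First I would record the structural constraints of this regime. Since $n$ is odd, $s$ is even and $n = \lambda s + \gamma$, the remainder $\gamma$ is forced to be odd, so $R = \{1,3,5,\dots,s-1\}$; moreover $s \ge 4$ (if $s=2$ then $\gamma = 0$, which is Theorem \ref{0}) and $\lambda > \gamma \ge 1$. I would then partition $R$ into three blocks matching the three cases of the statement — the extremes $\gamma \in \{1,\,s-1\}$; the small values $3 \le \gamma \le 2\lceil s/4\rceil - 1$; and the remaining large values — and verify that this really is a partition by an elementary floor/ceiling identity in $s$, in the spirit of the identity $2\lfloor (s+3)/4\rfloor - 1 + 2 = s - 2\lceil(s-1)/4\rceil + 2$ used in the earlier proofs, noting that some blocks are empty for small $s$ (for $s = 4$ only the extreme block is nonempty). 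Each block is then split further on the parity of $\lambda$, which decides whether $\lceil \lambda/2\rceil$ or $\lfloor \lambda/2\rfloor$ enters $d$ and hence forces a $\pm 1$ adjustment of $q$, and the smallest admissible $s$ in each block is treated as a separate boundary subcase.

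For the constructions I would be guided by the earlier lemmas. In the small-$\gamma$ block, where the target involves $\lfloor\lambda/2\rfloor + \tfrac{s-\gamma+1}{2}$, a vertex of the form $i = \bigl(\lfloor \tfrac{\lambda}{2}\rfloor \mp 1\bigr)s + \tfrac{s-1}{2} + \tfrac{\gamma+1}{2}$ (with the sign and the exact shift chosen so that $r < s$ and so that $\ell^2(i)$ or $\ell^1(i)$ lands on $d$) is the natural analogue of the vertices used in Cases 1 and 2.2 of Lemma \ref{3.2}. In the extreme block $\gamma \in \{1,s-1\}$ I would instead take $i$ near a multiple of $s$, e.g. $i = \bigl(\lceil \tfrac{\lambda}{2}\rceil - 1\bigr)s + \tfrac{s}{2}$ or a neighbour of it. In the large-$\gamma$ block, where $d = \lceil\lambda/2\rceil + \tfrac{\gamma-1}{2}$ and one has $s - \tfrac{\gamma-1}{2} > \tfrac{\gamma-1}{2}$, the vertex $i = \lfloor n/2\rfloor$ or a nearby multiple of $s$ should work, just as $i = n/2$ did in Case 2 of Lemma \ref{2.2}: the imbalance $s-\gamma$ makes the short counterclockwise inner path unavailable. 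For each choice I would compute $\ell^1(i)$ and $\ell^2(i)$ directly from $i = qs+r$, and for the $t$-indexed lengths I would split on whether $tn+i$ (resp. $tn-i$) overshoots the next multiple of $s$, yielding clean bounds $\ell^{j,t}(i) > d$ (or $\ge d$ at the boundary) uniformly in $t$, displayed in a table as in Tables \ref{tab10} and \ref{tab15}.

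The main obstacle is precisely this uniform control of the $t$-indexed lengths: one must show that wrapping $t$ full turns around the cycle before switching to inner edges can never beat the direct route, and — as in the earlier lemmas — in several boundary subcases the estimate is only $\ge d$, not $> d$, so the arithmetic bookkeeping has to be exact. The extra subtlety relative to the $n$-even cases is that $tn + i \equiv i + t \pmod s$ picks up an honest shift by $t$ because $n$ is odd, so the reduction of $tn \pm i$ modulo $s$ must be tracked subcase by subcase rather than read off a fixed pattern; choosing $i$ so that all six lengths behave correctly across both parities of $\lambda$ and all boundary values of $s$ is where the work concentrates. Once these inequalities are in hand, Lemma \ref{dis1} gives $d(i) = d$ in every subcase, which is the assertion of the lemma; combined with Lemma \ref{4.1} it will then yield the matching theorem.
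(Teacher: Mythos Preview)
Your plan is essentially the paper's own proof: the same three-way case split on $\gamma$, the same explicit witnesses (for $\gamma\in\{1,s-1\}$ the paper also takes $i=(\lceil\lambda/2\rceil-1)s+\tfrac{s}{2}$; in the small-$\gamma$ block it uses $r=\tfrac{s}{2}+1$ or $r=\tfrac{\gamma-1}{2}+\tfrac{s}{2}+1$ according to the parity of $\lambda$; in the large-$\gamma$ block it takes $i=\lceil\lambda/2\rceil s+\tfrac{\gamma-1}{2}$), and the same table-based verification via Lemma~\ref{dis1}. Two small slips to correct before you carry out the computation: first, $tn+i\equiv i+t\gamma\pmod s$, not $i+t$, so the shift is by $t\gamma$ in general (your formula is only right in the subcase $\gamma=1$); second, your proposed small-$\gamma$ vertex has $r=\tfrac{s-1}{2}+\tfrac{\gamma+1}{2}$, which is a half-integer here since $s$ is even --- replace $\tfrac{s-1}{2}$ by $\tfrac{s}{2}$ as the paper does.
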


\begin{proof}
Let     $d=\begin{cases} 
\lceil \frac{\lambda}{2} \rceil + \frac{s-2}{2} & \mbox{ if $\gamma \in \{1,s-1\}$,} \\
 \lfloor \frac{\lambda}{2} \rfloor + \frac{s-\gamma+1}{2} & \mbox{ if } \ 3\leq \gamma  \leq \lceil \frac{s}{2} \rceil -1, \\
\lceil \frac{\lambda}{2} \rceil +\frac{\gamma -1}{2} & \mbox{ otherwise. } 
 \end{cases}$ %\\ Note that since $n$ is odd  and $s$ is even, $\gamma$ is odd. Let $\gamma=2m+1,$ $0\leq m<\frac{s}{2}$. 

\textbf{Case 1.} $\gamma \in \{1,s-1\}$\\ If $\gamma=1$ then,  $\lambda\geq 2$, $s \geq 4$, and $d=\lceil\frac{\lambda}{2}\rceil+\frac{s-2}{2}.$  Let $i=(\lceil\frac{\lambda}{2}\rceil -1)s+\frac{s}{2}$ be a vertex of $C_n(1,s)$. Table \ref{tab21} represents a simple calculation of the lengths $\ell^1(i)$ to $\ell^{4,t}(i)$ provided in Lemma \ref{path}.

\begin{table}[H]
\centering
   {\begin{tabular}{ | c || l || r | }
     \hline
   %  $i$ & $=(\lceil\frac{\lambda}{2}\rceil -1)s+\frac{s}{2}$ & \\ \hline
     $\ell^1(i)$ & $=\lceil\frac{\lambda}{2}\rceil+\frac{s-2}{2}$ & $=d$ \\ \hline
     $\ell^2(i)$ & $=\lceil\frac{\lambda}{2}\rceil+\frac{s}{2}$ & $>d$ \\ \hline
     $\ell^{1,t}(i), \ell^{2,t}(i)$ & $ >\begin{cases} 
\lceil\frac{\lambda}{2}\rceil+\frac{s-2}{2} & \mbox{ if } \ t+\frac{s}{2}<s,\\
\lceil\frac{\lambda}{2}\rceil+\frac{s-2}{2} & \mbox{ if } \ t+\frac{s}{2}\geq s.
\end{cases}$ & $>d$ \\ \hline
%     $\ell^{2,t}(i)$ & $ >\begin{cases} 
%\lceil\frac{\lambda}{2}\rceil+\frac{s-2}{2} & \mbox{ if } \ t+\frac{s}{2}<s,\\
%\lceil\frac{\lambda}{2}\rceil+\frac{s-2}{2} & \mbox{ if } \ t+\frac{s}{2}\geq s.
%\end{cases}$ & $>d$ \\ \hline
     $\ell^{3,t}(i)$ & $ >\begin{cases} 
\lceil\frac{\lambda}{2}\rceil+\frac{s-2}{2} & \mbox{ if } \ t-\frac{s}{2}<0,\\
\lceil\frac{\lambda}{2}\rceil+\frac{s-2}{2} & \mbox{ if } \ 0\leq t-\frac{s}{2}<s.
\end{cases}$ & $> d$ \\ \hline
     $\ell^{4,t}(i)$ & $ \begin{cases} 
\geq \lceil\frac{\lambda}{2}\rceil+\frac{s-2}{2} & \mbox{ if } \ t-\frac{s}{2}<0,\\
>\lceil\frac{\lambda}{2}\rceil+\frac{s-2}{2} & \mbox{ if } \ 0\leq t-\frac{s}{2}<s.
\end{cases}$ & $\geq d$ \\ 
     \hline
   \end{tabular}}
   \caption{Values of $\ell^1(i)$, $\ell^2(i)$, $\ell^{1,t}(i)$, \ldots, $\ell^{4,t}(i)$.}
\label{tab21}
\end{table}Thus, by Lemma \ref{dis1},  we have $d(i)  =\ell^1 (i)=d$.

If $\gamma=s-1$ then,  $\lambda\geq s$, $s \geq 4$, and $d=\lceil\frac{\lambda}{2}\rceil+\frac{s-2}{2}.$  Let      $i=(\lceil\frac{\lambda}{2}\rceil -1)s+\frac{s}{2}$
 be a vertex of $C_n(1,s)$.  By a simple calculation of the lengths $\ell^1(i)$ to $\ell^{4,t}(i)$ represented in Lemma \ref{path}, and by applying Lemma \ref{dis1}, we obtain $d(i)  =\ell^1 (i)=d$.
 
 \textbf{Case 2.} $3\leq \gamma \leq \lceil \frac{s}{2}\rceil -1$

If $\lambda$ is even then,   $\lambda\geq 4$, $s \geq 4$, and $d=\frac{\lambda}{2}+\frac{s}{2}-\frac{\gamma -1}{2}.$ Let $i=(\frac{\lambda}{2}-\frac{\gamma -1}{2})s+\frac{s}{2}+1$ be a vertex of $C_n(1,s)$. Note that since $s\geq 4$, we have $\frac{s}{2}+1<s$.  By a simple calculation of the lengths $\ell^1(i)$ to $\ell^{4,t}(i)$ represented in Lemma \ref{path}, and by applying Lemma \ref{dis1}, we obtain $d(i)  =\ell^2 (i)=d$.

 If $\lambda$ is odd then, $\gamma\geq 3$,  $\lambda\geq 5$, $s \geq 4$, and $d=\frac{\lambda-1}{2}+\frac{s}{2}-\frac{\gamma -1}{2}.$ Let $i=\frac{\lambda-1}{2}s+\frac{\gamma -1}{2}+\frac{s}{2}+1$ be a vertex of $C_n(1,s)$. Note that since $\gamma \leq \lceil \frac{s}{2}\rceil -1$ and $s\geq 4$, we get $\frac{\gamma -1}{2}+\frac{s}{2}+1<s$.  By a simple calculation of the lengths $\ell^1(i)$ to $\ell^{4,t}(i)$ represented in Lemma \ref{path}, and by applying Lemma \ref{dis1}, we obtain $d(i)  =\ell^2 (i)=d$.
 
\textbf{Case 3.} $\lceil \frac{s}{2}\rceil\leq \gamma <s-1  $ \\ We have $\gamma\geq 3$,  $\lambda\geq 4$, $s \geq 4$, and $d=\lceil\frac{\lambda}{2}\rceil+\frac{\gamma -1}{2}.$ Let $i=\lceil\frac{\lambda}{2}\rceil s+\frac{\gamma -1}{2}$ be a vertex of $C_n(1,s)$. Note that since $s>\gamma$, we have $s-\frac{\gamma-1}{2}>\frac{\gamma+1}{2}$.  By a simple calculation of the lengths $\ell^1(i)$ to $\ell^{4,t}(i)$ represented in Lemma \ref{path}, and by applying Lemma \ref{dis1}, we obtain $d(i)  =\ell^1 (i)=d$. This completes the proof.
\end{proof}

Thus, the following theorem follows from Lemmas \ref{4.1} and \ref{4.2}.

\begin{theorem}
Let $n$ odd and $s$ even. We have,
$$diam(C_{n}(1,s))=\begin{cases} 
\lceil \frac{\lambda}{2} \rceil + \frac{s-2}{2} & \mbox{ if } \ \gamma = 1 \mbox{ or } \gamma = s-1, \\
 \lfloor \frac{\lambda}{2} \rfloor + \frac{s-\gamma+1}{2} & \mbox{ if } \ 3\leq \gamma  \leq 2\lceil \frac{s}{4} \rceil -1, \\
\lceil \frac{\lambda}{2} \rceil +\frac{\gamma -1}{2} & \mbox{ otherwise. } 
 \end{cases}$$
\end{theorem}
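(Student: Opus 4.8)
The plan is to prove the equality by establishing the two matching inequalities separately. The upper bound $diam(C_n(1,s)) \le d$, where $d$ denotes the right-hand side of the stated formula, is already available: it is exactly Lemma \ref{4.1}, so nothing has to be re-derived. For the reverse inequality, the observation to invoke is Lemma \ref{dis2}, which says $diam(C_n(1,s)) = \max\{d(i) : 2 \le i \le \lfloor n/2 \rfloor\}$; hence it is enough to exhibit one vertex $i$ in that range with $d(i) = d$, and this is precisely Lemma \ref{4.2}. Putting the two together gives $d \le diam(C_n(1,s)) \le d$, i.e.\ equality, in all three branches of the formula.

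Before combining the lemmas I would record that the three cases are exhaustive and that no degenerate situation intervenes. Because $n$ is odd and $s$ is even, the remainder $\gamma = n - \lambda s$ is never $0$ (that would force $n$ even), so the case $n = \lambda s$ of Theorem \ref{0} never occurs here; together with the standing hypothesis $\lambda > \gamma$ of this subsection we always have $1 \le \gamma \le s-1$ and $\lambda \ge \gamma + 1$. The branches $\gamma \in \{1, s-1\}$, $3 \le \gamma \le 2\lceil s/4 \rceil - 1$, and ``otherwise'' then cover every admissible $\gamma$, so the piecewise definition of $d$ is unambiguous and both lemmas apply across the whole parameter range.

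The real content sits inside Lemma \ref{4.2}, and that is where the work --- and the main obstacle --- lies. The strategy there is the same as in the earlier parity subsections: choose an explicit witness vertex in each branch (of the form $(\lceil \lambda/2\rceil - 1)s + s/2$ when $\gamma \in \{1, s-1\}$; $(\tfrac{\lambda}{2} - \tfrac{\gamma-1}{2})s + \tfrac s2 + 1$ or $\tfrac{\lambda-1}{2}s + \tfrac{\gamma-1}{2} + \tfrac s2 + 1$ according to the parity of $\lambda$ in the middle branch; and $\lceil \lambda/2\rceil s + \tfrac{\gamma-1}{2}$ in the last branch), compute for it the six candidate path-lengths $\ell^1(i), \ell^2(i), \ell^{1,t}(i), \ell^{2,t}(i), \ell^{3,t}(i), \ell^{4,t}(i)$ supplied by Lemma \ref{path}, and check via Lemma \ref{dis1} that their minimum over all admissible $t$ (with $1 \le t \le s/gcd(n,s)$) equals $d$. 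The delicate point is the dependence on $t$: since $q_t = \lfloor (tn+i)/s\rfloor$ and $\bar q_t = \lfloor (tn-i)/s\rfloor$ behave like affine functions of $t$ with an extra ``carry'' whenever $t\gamma$ plus the fixed residue of $\pm i$ crosses a multiple of $s$, one must split into the sub-cases (for instance $t\gamma + \tfrac s2 < s$ versus $\ge s$, and their $tn-i$ analogues) and show that in each the relevant length stays $\ge d$. The hard part will be keeping these monotonicity estimates uniformly correct across the parity of $\lambda$ and the boundary values $\gamma = 3$, $\gamma = 2\lceil s/4\rceil - 1$ and $\gamma = s-1$; once those inequalities are nailed down, the conclusion $d(i) = d$, and with it the theorem, follows immediately.
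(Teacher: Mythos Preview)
Your proposal is correct and matches the paper's approach exactly: the theorem is obtained simply by combining the upper bound of Lemma~\ref{4.1} with the witness vertex supplied by Lemma~\ref{4.2}, and your sketch of how Lemma~\ref{4.2} is established (explicit witnesses in each branch, then checking via Lemma~\ref{dis1} that all six candidate lengths are~$\ge d$ across the $t$-subcases) is precisely what the paper does.
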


Next, we discuss the case when $\gamma\neq 0$ and $\lambda \leq \gamma$.

\subsection{Diameter of $C_{n}(1,s)$ when $\lambda \leq \gamma$}

\begin{theorem}\cite{chen2005diameter}
Let $s = a\gamma + b,$ where $b$ $(0 < b < \gamma)$ is the remainder of dividing $s$ by $\gamma$. Define $p_0=\lfloor \frac{\lambda+\gamma}{2} \rfloor$, $p_1=\lfloor \frac{\gamma-b+(a+1)\lambda+1}{2} \rfloor$, $p_2=\lfloor \frac{\gamma+b+(a-1)\lambda+1}{2} \rfloor$, and $p_3=\lfloor \frac{b+a\lambda+1}{2} \rfloor$. Let $e_1=\min\{\max\{p_1, p_3\},\max\{p_0, p_2\}\}.$ If $\lambda\leq \gamma$ and $b\leq a\lambda+1,$ then 

$$diam(C_{n}(1,s))=\begin{cases} 
p_1 -1  & \mbox{ if } p_1=p_2 \mbox{ and } (\gamma+b)(a\lambda-\lambda+1) \equiv 1 \pmod 2, \\
 e_1 & \mbox{ otherwise. } 
 \end{cases}$$
\end{theorem}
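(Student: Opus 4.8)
The plan is to mirror the four preceding subsections: split the identity into an \emph{upper bound} $diam(C_n(1,s)) \le D$, where $D$ denotes the right-hand side, and a \emph{lower bound} producing a vertex $i^{*}$ with $d(i^{*}) = D$; the theorem then follows from Lemma~\ref{dis2}. The upper bound is exactly the one available from \cite{chen2005diameter}, and it can also be recovered inside our framework: for each $i$ with $2 \le i \le \lfloor n/2 \rfloor$ write $i = qs + r$, and, using $s = a\gamma + b$ together with the hypotheses $\lambda \le \gamma$ and $b \le a\lambda + 1$, select the winning candidate among $\ell^1(i)$, $\ell^2(i)$, $\ell^{1,t}(i)$, $\ell^{2,t}(i)$, $\ell^{3,t}(i)$, $\ell^{4,t}(i)$ of Lemma~\ref{path} and check it is $\le D$; Lemma~\ref{dis1} then gives $d(i) \le D$ for all such $i$.

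For the lower bound the candidate extremal vertex is dictated by which term realizes $e_1 = \min\{\max\{p_1, p_3\}, \max\{p_0, p_2\}\}$. The key computational identity is that, writing $i = qs + r$, one has $tn + i = \bigl(t\lambda + q + \lfloor (t\gamma + r)/s \rfloor\bigr)s + \bigl((t\gamma + r) \bmod s\bigr)$, and similarly for $tn - i$, so the quotient governing $\ell^{j,t}(i)$ depends on the carry $\lfloor (t\gamma + r)/s \rfloor$, which is in turn controlled by how $\gamma$ fits into $s$ --- this is precisely why the parameters $a$ and $b$ enter. I would therefore take $i^{*}$ of the form $p \cdot s + \rho$ with $(p,\rho)$ chosen so that: in the sub-case $\max\{p_1,p_3\} \le \max\{p_0,p_2\}$, the wrap-around lengths $\ell^{1,1}(i^{*}),\ell^{3,1}(i^{*})$ realize $\max\{p_1,p_3\}$ while $\ell^1(i^{*}),\ell^2(i^{*})$ are strictly larger, and symmetrically in the other sub-case the direct lengths $\ell^1,\ell^2$ realize $\max\{p_0,p_2\}$. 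Evaluating all six lengths of Lemma~\ref{path} at $i^{*}$ for every admissible $t$ and checking each is $\ge e_1$ gives $d(i^{*}) = e_1$ by Lemma~\ref{dis1}, hence $diam(C_n(1,s)) = e_1$ in the generic case.

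The delicate case is the parity degeneracy $p_1 = p_2$ with $(\gamma+b)(a\lambda-\lambda+1) \equiv 1 \pmod 2$, where $D = p_1 - 1$. Here the argument has two halves. First one shows that when this product is odd, \emph{no} vertex admits a path of length $p_1$: one tracks the parity of $\ell_a + \ell_c$ across the candidates of Lemma~\ref{path}, showing that a $\pm 1$ change in the inner-edge count $q$ (resp.\ $q_t$, $\bar q_t$) always toggles the parity of the matching outer-edge count, so the ``half-integer optimum'' $p_1 = p_2$ cannot be attained and every candidate is $\le p_1 - 1$ or $\ge p_1 + 1$; second, one exhibits an explicit vertex achieving $d(i) = p_1 - 1$, which is again a direct Lemma~\ref{path}/Lemma~\ref{dis1} computation. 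The parity bookkeeping --- carried through $n = \lambda s + \gamma$ and $s = a\gamma + b$, so that $\gamma + b$ and $a\lambda - \lambda + 1$ are the relevant residues --- is where the main effort goes; everything else is routine arithmetic with floors. Combining the two bounds with Lemma~\ref{dis2} yields the stated formula, and one checks in passing that the extremal vertex thus found is also consistent with the general upper bound listed in the last row of the table.
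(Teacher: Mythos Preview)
The paper does not prove this theorem at all: it is stated with a citation to \cite{chen2005diameter} and no argument is given. Unlike the four preceding subsections, there is no companion ``there exists a vertex $i$ with $d(i)=D$'' lemma here; the result is simply imported from the literature. So there is nothing in the paper for your proposal to be compared against.

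That said, your plan is the natural one and is exactly in the spirit of Lemmas~\ref{1.2}, \ref{2.2}, \ref{3.2}, \ref{4.2}: pair the cited upper bound with an explicit witness vertex $i^{*}$ and verify via Lemma~\ref{path} and Lemma~\ref{dis1} that $d(i^{*})=D$. What you have written, however, is an outline rather than a proof. The crucial data are missing: you never specify $(p,\rho)$ in $i^{*}=ps+\rho$ for any of the sub-cases, and the phrase ``select the winning candidate \ldots\ and check it is $\le D$'' is not a proof of the upper bound either. In the degenerate case $p_1=p_2$ with $(\gamma+b)(a\lambda-\lambda+1)$ odd, your parity sketch is plausible but would need to be made precise: the claim that ``no vertex admits a path of length $p_1$'' is not what is required (many vertices will have \emph{some} path of that length); what must be shown is that every vertex has a path of length at most $p_1-1$, and that some vertex has no shorter one. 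If you want to turn this into an actual proof you will need to supply the explicit $i^{*}$ in each branch and carry out the length computations, as the paper does in the $\lambda>\gamma$ cases.
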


For the rest of the values of $n$ and $s$, our algorithm gives an exact value for the diameter of $C_{n}(1,s)$, but does not provide an exact formula. Therefore, we present, in the next section,   upper bounds on the diameter of  $C_n(1,s)$ for all values of $n$ and $s$.

\subsection{Upper bound for $diam(C_n(1,s))$}
\label{Subsec:3}

In 1990, Du \textit{et al.} \cite{du1990combinatorial} gave the following upper bound on the diameter of $C_n(1,s)$:

\begin{equation} \label{1}
diam(C_n(1,s)) \leq \max\{ \lfloor \frac{n}{s} \rfloor+1, n-\lfloor \frac{n}{s} \rfloor s-2, ( \lfloor \frac{n}{s} \rfloor+1)s-n-1\}.
\end{equation}

Another upper bound was given by G{\"o}bel and Neutel \cite{gobel2000cyclic}:

\begin{equation} \label{2} 
 diam(C_n(1,s)) \leq diam(C_n(1,2))=\lfloor \frac{n+2}{4}\rfloor.
 \end{equation}

 Next, we present our upper bound on the diameter of  $C_n(1,s)$ for all values of $n$ and $s$.

\begin{theorem}
\label{prop10}
For all $n$ and $s,$ we have
\begin{equation} \label{3}
diam(C_n(1,s)) \leq \lfloor \frac{\lfloor \frac{n}{2} \rfloor}{s}\rfloor + \lceil \frac{s}{2} \rceil.
\end{equation} 
\end{theorem}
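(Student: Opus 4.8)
The plan is to reduce the statement to an elementary inequality by using only the two ``direct'' paths of Lemma~\ref{path}. By Lemma~\ref{dis2} we have $diam(C_n(1,s)) = \max\{d(i): 2\le i \le \lfloor n/2\rfloor\}$, so it suffices to bound $d(i)$ for an arbitrary vertex $i$ with $2\le i\le \lfloor n/2\rfloor$. Write $i = qs + r$ with $0\le r < s$ and $q = \lfloor i/s\rfloor$. By Lemma~\ref{dis1}, $d(i) = \min(\ell^1(i),\ell^2(i),\ell^{1,t}(i),\ell^{2,t}(i),\ell^{3,t}(i),\ell^{4,t}(i))$; in particular $d(i)\le \min(\ell^1(i),\ell^2(i))$, so the six ``wrap-around'' paths $P^{j,t}(i)$ can be discarded entirely and we work only with $P^1(i)=(ra^+,qc^+)$ and $P^2(i)=((s-r)a^-,(q+1)c^+)$.

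From the lengths listed in Lemma~\ref{path},
$$d(i)\ \le\ \min(\ell^1(i),\ell^2(i))\ =\ \min\bigl(q+r,\ q+1+s-r\bigr)\ =\ q + \min(r,\ s+1-r).$$
I will then bound the two factors separately. First, since $i\le \lfloor n/2\rfloor$ and $a\mapsto \lfloor a/s\rfloor$ is monotone on integers, $q=\lfloor i/s\rfloor \le \bigl\lfloor \lfloor n/2\rfloor / s\bigr\rfloor$. Second, for every integer $r$ with $0\le r<s$ one has $\min(r,\ s+1-r)\le \lceil s/2\rceil$: if $r\le \lceil s/2\rceil$ this is clear because $\min(r,s+1-r)\le r$; if instead $r\ge \lceil s/2\rceil+1$, then $s+1-r\le s-\lceil s/2\rceil = \lfloor s/2\rfloor \le \lceil s/2\rceil$, so $\min(r,s+1-r)\le \lceil s/2\rceil$ again. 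Combining the two bounds gives $d(i)\le \bigl\lfloor \lfloor n/2\rfloor/s\bigr\rfloor + \lceil s/2\rceil$ for every such $i$, and taking the maximum over $2\le i\le \lfloor n/2\rfloor$ via Lemma~\ref{dis2} yields~\eqref{3}.

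I do not expect any genuine obstacle here: the whole argument is routine once one observes that the two basic paths $P^1(i)$ and $P^2(i)$ already furnish an upper bound on $d(i)$ (the $P^{j,t}(i)$ are needed for exact distances, not for this estimate). The only points requiring a little care are the parity bookkeeping in the inequality $\min(r,s+1-r)\le \lceil s/2\rceil$, and the fact that it is precisely the restriction $2\le i\le \lfloor n/2\rfloor$ coming from Lemma~\ref{dis2} that forces $q\le \bigl\lfloor\lfloor n/2\rfloor/s\bigr\rfloor$ rather than the larger quantity $\lfloor (n-1)/s\rfloor$; without the symmetry $d(i)=d(n-i)$ this bound would be weaker. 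Exhibiting a vertex that attains the bound is not needed, since the statement only asserts an inequality.
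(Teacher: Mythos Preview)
Your proposal is correct and follows essentially the same approach as the paper: restrict to $2\le i\le \lfloor n/2\rfloor$ via Lemma~\ref{dis2}, discard the wrap-around paths and keep only $\ell^1(i)$ and $\ell^2(i)$, then bound $q\le \lfloor\lfloor n/2\rfloor/s\rfloor$ and handle the residue $r$ by a case split at $\lceil s/2\rceil$. Your write-up is in fact slightly cleaner, since rewriting $\min(\ell^1(i),\ell^2(i))=q+\min(r,s+1-r)$ makes the separation of the two bounds more transparent than the paper's formulation.
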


\begin{proof}
 If $i\leq \lfloor \frac{n}{2} \rfloor,$ then $\ell^1(i) \leq r + \lfloor \frac{\lfloor \frac{n}{2} \rfloor}{s} \rfloor  $ and $\ell^2(i) \leq 1+s-r+ \lfloor \frac{\lfloor \frac{n}{2} \rfloor}{s} \rfloor.$ Since, by Lemma \ref{dis1}, $d(i) \leq \min(\ell^1(i), \ell^2(i)),$ then  $d(i) \leq \lfloor \frac{\lfloor \frac{n}{2} \rfloor}{s}\rfloor + \lceil \frac{s}{2} \rceil .$ In fact, $\min(\ell^1(i), \ell^2(i))=\ell^1(i)$ if $r \leq \lceil \frac{s}{2} \rceil$ and we have $\min(\ell^1(i), \ell^2(i))=\ell^1(i) \leq \lceil \frac{s}{2} \rceil+ \lfloor \frac{\lfloor \frac{n}{2} \rfloor}{s} \rfloor  .$ If $r > \lceil \frac{s}{2} \rceil,$ then $\min(\ell^1(i), \ell^2(i))=\ell^2(i) < 1+s-\lceil \frac{s}{2} \rceil +\lfloor \frac{\lfloor \frac{n}{2} \rfloor}{s}\rfloor \leq \lceil \frac{s}{2} \rceil +\lfloor \frac{\lfloor \frac{n}{2} \rfloor}{s}\rfloor.$ \\ Thus, for $i\leq \lfloor \frac{n}{2} \rfloor,$ $d(i) \leq \lfloor \frac{\lfloor \frac{n}{2} \rfloor}{s}\rfloor + \lceil \frac{s}{2} \rceil$. By Lemma \ref{dis2}, we conclude that $diam(C_n(1,s)) \leq \lfloor \frac{\lfloor \frac{n}{2} \rfloor}{s}\rfloor + \lceil \frac{s}{2} \rceil$. 
\end{proof}

By combining (\ref{1}), (\ref{2}) and (\ref{3}), we obtain the following result.

\begin{theorem}
For all $n$ and $s,$ we have
$$diam(C_n(1,s)) \leq \min (\max\{ \lfloor \frac{n}{s} \rfloor+1, n-\lfloor \frac{n}{s} \rfloor s-2, ( \lfloor \frac{n}{s} \rfloor+1)s-n-1\}, \lfloor \frac{n+2}{4}\rfloor, \lfloor \frac{\lfloor \frac{n}{2} \rfloor}{s}\rfloor + \lceil \frac{s}{2} \rceil ).$$ 
\end{theorem}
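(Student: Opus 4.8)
The plan is immediate: this statement is a packaging of three upper bounds on $diam(C_n(1,s))$ that have already been recorded, so the only work is to combine them. First I would invoke, in turn, the bound~(\ref{1}) of Du \emph{et al.}, the bound~(\ref{2}) of G\"obel and Neutel, and the bound~(\ref{3}) established in Theorem~\ref{prop10}. Each of these holds for every $n$ and $s$ in the range under consideration, and each asserts that $diam(C_n(1,s))$ is at most one of the three quantities appearing in the statement.

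Concretely, write $A = \max\{ \lfloor \frac{n}{s} \rfloor+1,\ n-\lfloor \frac{n}{s} \rfloor s-2,\ ( \lfloor \frac{n}{s} \rfloor+1)s-n-1\}$, $B = \lfloor \frac{n+2}{4}\rfloor$, and $C = \lfloor \frac{\lfloor \frac{n}{2} \rfloor}{s}\rfloor + \lceil \frac{s}{2} \rceil$. By~(\ref{1}), $diam(C_n(1,s)) \leq A$; by~(\ref{2}), $diam(C_n(1,s)) \leq B$; and by~(\ref{3}) (Theorem~\ref{prop10}), $diam(C_n(1,s)) \leq C$. A quantity that is simultaneously bounded above by $A$, by $B$, and by $C$ is bounded above by $\min(A,B,C)$, which is exactly the right-hand side of the claimed inequality. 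There is no genuine obstacle in this deduction; the content lies entirely in the three cited results, the last of which was proved above via Lemmas~\ref{dis1} and~\ref{dis2}.

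The only point worth a remark is why the three bounds are worth combining rather than quoting individually: $A$ is the decisive term when $\lambda = \lfloor n/s\rfloor$ is large relative to $s$ and $\gamma$; $B$ equals $diam(C_n(1,2))$ and dominates the comparison when $s$ is small; and $C$ is of order $\sqrt{n}$ and controls the intermediate regime where $s$ is comparable to $\sqrt{n}$. Taking the minimum therefore produces a bound that is never worse than any single one of~(\ref{1}),~(\ref{2}),~(\ref{3}), and is strictly sharper than either~(\ref{1}) or~(\ref{2}) precisely when $C$ is the smallest of the three.
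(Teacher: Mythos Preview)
Your proposal is correct and matches the paper's own treatment exactly: the theorem is stated immediately after the sentence ``By combining (\ref{1}), (\ref{2}) and (\ref{3}), we obtain the following result,'' with no further proof given. Your naming of the three quantities $A,B,C$ and the one-line deduction $diam(C_n(1,s))\le\min(A,B,C)$ is precisely what the paper intends, and your closing remark about which bound is sharpest in which regime is a helpful addition that goes slightly beyond what the paper says.
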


%After making a machine experiment, we found that, in most of the time, our upper bound (\ref{3}) is the minimum.

\paragraph*{\textbf{Conclusion}}

To the best of our knowledge, despite the regularity of circulant graphs, there is no formulas giving exact values for the distance and the diameter of $C_n(1,s)$ for all $n$ and $s$. Our approach which makes it possible to determine the distances in a circulant, allowed us to give exact formulas for the diameter of circulant graphs $C_n(1,s)$, for almost all values of $n$ and $s$, and to provide upper bounds for the rest of the cases of $n$ and $s$.

\bibliographystyle{unsrt}  
%\bibliography{references}  %%% Remove comment to use the external .bib file (using bibtex).
%%% and comment out the ``thebibliography'' section.

%%% Comment out this section when you \bibliography{references} is enabled.

\end{document}